\title[Symmetric products and configuration spaces]{Symmetric products, 
duality and homological\\dimension of configuration spaces}
\author{Sadok Kallel}
\address{Universit\'e des Sciences et Technologies de Lille\\
  Laboratoire Painlev\'e, U.F.R de Math\'ematiques\\\newline
 59655 Villeneuve d'Ascq\\France}
\email{sadok.kallel@math.univ-lille1.fr}
\dedicatory{To Fred Cohen on his 60th birthday}
\def\cnewtheorem#1[#2]#3{\newtheorem{#1}{#3}[section]
\expandafter\let\csname c@#1\endcsname\c@thm}
\let\xysavmatrix\xymatrix
\def\xymatrix{\disablesubscriptcorrection\xysavmatrix}
\newtheorem{thm}{Theorem}[section]  
\theoremstyle{definition}
\newtheorem*{rem}{Remark}           
\def\la#1{\hbox to #1pc{\leftarrowfill}}
\def\ra#1{\hbox to #1pc{\rightarrowfill}}
\def\fract#1#2{\raise4pt\hbox{$ #1 \atop #2 $}}
\def\decdnar#1{\phantom{\hbox{$\scriptstyle{#1}$}}
\left\downarrow\vbox{\vskip10pt\hbox{$\scriptstyle{#1}$}}\right.}
\def\lrar{{\ra{1.5}}}
\def\tensor{\otimes}
\def\sp#1{{SP}^{#1}}
\def\bsp#1{\overline{{SP}}^{#1}}
\def\tsp#1{{{\mathbb S} P}^{#1}}
\def\spy{{SP}^{\infty}}
\def\map#1{\Map_{#1}}
\def\bmap#1{\Map^*_{#1}}
\def\tp#1{\hbox{TP}^{#1}}
\def\cohdim{\hbox{cohdim}}
\def\bbz{{\mathbb Z}}
\def\bbf{{\mathbb F}}
\def\bbp{{\mathbb P}}
\def\bbr{{\mathbb R}}
\def\bbn{{\mathbb N}}
\def\bbc{{\mathbb C}}
\def\bbr{{\mathbb R}}
\begin{document}

\begin{htmlabstract}
We discuss various aspects of &ldquo;braid spaces&rdquo; or
configuration spaces of unordered points on manifolds.  First we
describe how the homology of these spaces is affected by puncturing
the underlying manifold, hence extending some results of Fred Cohen,
Goryunov and Napolitano.  Next we obtain a precise bound for the
cohomological dimension of braid spaces.  This is related to some
sharp and useful connectivity bounds that we establish for the reduced
symmetric products of any simplicial complex.  Our methods are
geometric and exploit a dual version of configuration spaces given in
terms of truncated symmetric products.  We finally refine and then
apply a theorem of McDuff on the homological connectivity of a map
from braid spaces to some spaces of &ldquo;vector fields&rdquo;.
\end{htmlabstract}

\begin{abstract} 
We discuss various aspects of ``braid spaces'' or configuration spaces
of unordered points on manifolds.  First we describe how the homology
of these spaces is affected by puncturing the underlying manifold,
hence extending some results of Fred Cohen, Goryunov and Napolitano.
Next we obtain a precise bound for the cohomological dimension of
braid spaces.  This is related to some sharp and useful connectivity
bounds that we establish for the reduced symmetric products of any
simplicial complex.  Our methods are geometric and exploit a dual
version of configuration spaces given in terms of truncated symmetric
products.  We finally refine and then apply a theorem of McDuff on the
homological connectivity of a map from braid spaces to some spaces of
``vector fields''.
\end{abstract}

\maketitle


\section{Introduction}

Braid spaces or configuration spaces of \textit{unordered pairwise
distinct} points on manifolds have important applications to a number
of areas of mathematics and physics. They were of crucial use in the
seventies in the work of Arnold on singularities and then later in the
eighties in work of Atiyah and Jones on instanton spaces in gauge
theory. In the nineties they entered in many works on the homological
stability of holomorphic mapping spaces. No more important perhaps had
been their use than in stable homotopy theory in the sixties and early
seventies through the work of Milgram, May, Segal and Fred Cohen who
worked out the precise connection with loop space theory. This work
has led in particular to the proof of Nishida's nilpotence theorem and
to Mahowald's infinite family in the stable homotopy groups of spheres
to name a few.

Given a space $M$, define $B(M,n)$ to be the space of finite subsets
of $M$ of cardinality $n$.  This is usually referred to as the $n^{\rm th}$
``braid space'' of $M$ and in the literature it is often denoted by
$C_n(M)$ (Atiyah and Jones \cite{atiyah}, B{\"o}digheimer, Cohen and
Taylor \cite{bct}, Cohen \cite{cohen}). Its fundamental group written
$Br_n(M)$ is the ``braid group'' of $M$. The object of this paper is
to study the homology of braid spaces and the main approach we adopt
is that of duality with the symmetric products. In so doing we take
the opportunity to refine and elaborate on some classical material.
Next is a brief content summary.

\fullref{braids} describes the homotopy type of braid spaces of some
familiar spaces and discusses orientation issues.  \fullref{tp}
introduces truncated products, as in B{\"o}digheimer, Cohen and
Milgram \cite{bcm} and Milgram and L{\"o}ffler \cite{lm}, states the
duality with braid spaces and then proves our first main result on the
cohomological dimension of braid spaces. \fullref{punctured} uses
truncated product constructions to split in an elementary fashion the
homology of braid spaces for punctured manifolds.  In \fullref{bounds}
we prove our sharp connectivity result for \textit{reduced} symmetric
products of CW complexes which seems to be new and a significant
improvement on work of Nakaoka and Welcher \cite{welcher}.  In
\fullref{spec} we make the link between the homology of symmetric and
truncated products by discussing a spectral sequence introduced by
B\"odigheimer, Cohen and Milgram and exploited by them
to study ``braid homology'' $H_*(B(M,n))$. Finally \fullref{stability}
completes a left out piece from McDuff and Segal's work on
configuration spaces \cite{dusa}.  In that paper, $H_*(B(M,n))$, for
closed manifolds $M$, is compared to the homology of some spaces of
``compactly supported vector fields'' on $M$ and the main theorem there
states that these homologies are isomorphic up to a range that
increases with $n$. We make this range more explicit and use it for
example to determine the abelianization of the braid groups of a
closed Riemann surface. A final appendix collects some homotopy
theoretic properties of section spaces that we use throughout.

Below are precise statements of our main results which we have divided
up into three main parts.  Unless explicitly stated, all spaces are
assumed to be connected. The $n^{\rm th}$ symmetric group 
is written $\mathfrak{S}_n$.

\subsection{Connectivity and cohomological dimension}
For $M$ a manifold, we write $H^*(M,\pm \bbz )$ for the cohomology of $M$
with coefficients in the orientation sheaf $\pm\bbz$; in other words
$H^*(M,\pm\bbz)$ is the homology of\break $\Hom_{\bbz[\pi_1(X)]}(C_*(\tilde
M), \bbz)$, where $C_*(\tilde M)$ is the singular chain complex of the
universal cover $\tilde M$ of $M$, and where the action of (the class
of) a loop on the integers $\bbz$ is multiplication by $\pm 1$
according to whether this loop preserves or reverses orientation.
Similarly one defines $H_*(M,\pm\bbz ):= 
H_*(C_*(\tilde M)\tensor_{\bbz[\pi_1(x)]}\bbz)$.

\begin{rem}\label{twisted} (see \fullref{folklore})\qua When $M$ is simply connected
  and $\dim M:=d > 2$, $\pi_1(B(M,k))=\mathfrak{S}_k$ and $\tilde B(M,k) =
  F(M,k)\subset M^k$ is the subspace of $k$ \textit{ordered} pairwise distinct
  points in $M$ (\fullref{braids}). It follows that
  $H^*(B(M,k);\pm\bbz)$ is the homology of the chain complex
  $\Hom_{\bbz[\mathfrak{S}_k]}(C_*(F(M,k),\bbz)$ where $\mathfrak{S}_k$ acts on
  $\bbz$ via $\sigma (1) = (-1)^{sg( \sigma)\cdot d }$ and $sg (\sigma
  )$ is the sign of the permutation $\sigma\in\mathfrak{S}_k$. \end{rem}

We denote by $\cohdim_{\pm\bbz}(M)$ (cohomological dimension) the smallest
integer with the property that
$$H^i(M;\pm\bbz ) = 0\ ,\ \ \forall i > \cohdim_{\pm\bbz }(M)\ .$$
If $M$ is orientable, then $H^*(M,\pm\bbz ) = H^*(M,\bbz )$ and
$\cohdim_{\pm\bbz}(M) =$\break $\cohdim (M)$, the
cohomological dimension of $M$.

A space $X$ is $r$--connected if $\pi_i(X)=0$ for $0\leq i\leq r$. The
connectivity of $X$; $\conn(X)$, is the largest integer with such a
property.  This connectivity is infinite if $X$ is contractible.  The
following is our first main result

\begin{thm}\label{main3} Let $M$ be a compact manifold of dimension $d\geq 1$,
  with boundary $\partial M$, and let $U\subset M$ be a closed subset such
  that $U\cap\partial M= \emptyset$ and $M-U$ connected. 
We denote by $r$ the connectivity of $M$
  if $U\cup\partial M=\emptyset$, or the connectivity of the quotient
  $M/U\cup\partial M$ if $U\cup\partial M\neq \emptyset$. We assume $0\leq
  r<\infty$ and $k\geq 2$. Then
$$
\cohdim_{\pm\bbz}(B(M-U,k)) \leq
\begin{cases} (d-1)k-r+1, &\hbox{if}\ U\cup\partial M=\emptyset,\\
(d-1)k-r, &\hbox{if}\ U\cup\partial M\neq \emptyset.
\end{cases}
$$ When $M$ is even dimensional orientable, then replace
$\cohdim_{\pm\bbz}$ by $\cohdim$.
\end{thm}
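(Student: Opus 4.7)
The plan is to combine two ingredients. The first is the Poincar\'e--Lefschetz type duality identifying the twisted cohomology of $B(M-U,k)$ with the reduced homology of a truncated symmetric product of the quotient $M/(U\cup\partial M)$; this will be set up in \fullref{tp} using the constructions of \cite{bcm,lm}. The second is the sharp connectivity bound for reduced/truncated symmetric products of a based CW complex, which is the main content of \fullref{bounds}. High connectivity on the symmetric-product side forces the twisted cohomology of the braid space to vanish above the asserted range.

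More precisely, set $A:=U\cup\partial M$ and let $X$ be $M/A$ in the punctured case ($A\neq\emptyset$). The duality of \fullref{tp} takes the schematic form
$$H^{i}\bigl(B(M-U,k);\pm\bbz\bigr)\;\cong\;\tilde H_{\,dk-i}\bigl(\tp{k}(X)\bigr),$$
where $\tp{k}(X)$ is the double quotient $\sp{k}(X)/\bigl(\sp{k-1}(X)\cup\Delta\sp{k}(X)\bigr)$ collapsing both the basepoint-containing and the repetition-containing configurations. This comes from Poincar\'e--Lefschetz duality applied to the open $dk$-dimensional manifold $F(M-U,k)$, passing to its one-point compactification and descending along the free quotient by $\mathfrak{S}_k$; the orientation sheaf enters as the sign twist $\sigma\mapsto(-1)^{d\,sg(\sigma)}$ of \fullref{twisted}, which becomes trivial when $d$ is even and $M$ is orientable. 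The closed case ($A=\emptyset$) is parallel, with $\tp{k}(M)=\sp{k}(M)/\Delta\sp{k}(M)$ and no separate basepoint-collapse stratum.

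I would then feed this into the connectivity estimate from \fullref{bounds}. In the punctured case, $X$ is $r$-connected and based (with basepoint the collapsed image of $A$), and the bound gives $\tp{k}(X)$ at least $(r+k-1)$-connected. Consequently $\tilde H_{dk-i}(\tp{k}(X))=0$ whenever $dk-i<r+k$, i.e.\ $i>(d-1)k-r$. In the closed case the basepoint of $\tp{k}(M)$ is only the image of the fat diagonal, which costs one in connectivity: $\tp{k}(M)$ is then at least $(r+k-2)$-connected, yielding vanishing for $i>(d-1)k-r+1$. The final clause---replacing $\cohdim_{\pm\bbz}$ by $\cohdim$ when $M$ is even-dimensional and orientable---follows from the triviality of the sign representation in \fullref{twisted} when $d$ is even, which makes $B(M-U,k)$ orientable and the $\pm\bbz$ coefficients ordinary.

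The principal obstacle is the connectivity estimate itself, which is the substantive content of \fullref{bounds}. Achieving it sharply---strong enough to improve on the classical bounds of Nakaoka and Welcher---will require an inductive filtration argument along $\sp{k-1}(X)\subset\sp{k}(X)$, combined with smash-product connectivity estimates on the successive subquotients $\bsp{j}(X)$. The remaining steps (verifying the duality, identifying the one-point compactification $F(M-U,k)^{+}/\mathfrak{S}_k$ with $\tp{k}(X)$, and tracking the sign twist) are essentially bookkeeping from the framework of \fullref{tp} and \fullref{twisted}.
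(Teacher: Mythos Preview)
Your overall architecture is exactly the paper's: Poincar\'e--Lefschetz duality (\fullref{duality}) converts the twisted cohomology of $B(M-U,k)$ into the homology of $TP^k(\overline{M})/TP^{k-1}(\overline{M})$ (or $TP^k(M)/TP^{k-2}(M)$ in the closed case), and a connectivity bound for that quotient gives the vanishing range via \fullref{conR}. The numerical bounds you write down, $(r+k-1)$ and $(r+k-2)$, are also the ones the paper uses.

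Where you go astray is in locating and weighting the connectivity input. The bound you need is \emph{not} the content of \fullref{bounds}: that section treats the reduced \emph{symmetric} products $\bsp{n}(X)=\sp{n}(X)/\sp{n-1}(X)$ and proves the sharper estimate $2n+r-2$ (\fullref{connectivity}), which is a separate result, independent of \fullref{main3}. What the proof of \fullref{main3} actually invokes is Nakaoka's classical bound (\fullref{nakak}): if $Y$ is $r$--connected then $(Y^{(k)}/\Delta_{\rm fat})_{\mathfrak{S}_k}=\overline{TP}^k(Y)$ is $(r+k-1)$--connected. This is cited, not proved here; the closed case then follows from the long exact sequence of the triple $(TP^{k-2},TP^{k-1},TP^k)$, which drops the connectivity by one (\fullref{R}). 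So the ``principal obstacle'' you flag---improving on Nakaoka and Welcher via a filtration argument on $\sp{k-1}\subset\sp{k}$---is not an obstacle at all for this theorem; Nakaoka's existing bound already suffices. Your proposed filtration argument is closer in spirit to what \fullref{bounds} does for $\bsp{n}$, but that is a different space and a different theorem.
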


\begin{rem}\label{numbered}
We check this theorem against some known examples:
\begin{enumerate}
\item $B(S^d-\{p\},2)=B(\bbr^d,2)\simeq\bbr P^{d-1}$
(see \fullref{tp}) and
$\cohdim_{\pm\bbz}(B(\bbr^d,2))$
$= 2(d-1)-r = d-1 =\cohdim_{\pm\bbz}(\bbr P^{d-1})$ indeed, where
$r=d-1= \conn(S^d)$.
\item
$B(S^d,2)\simeq\bbr P^d$ (see \fullref{tp}) and
$\cohdim_{\pm\bbz} (B(S^d,2)) = d$\ in agreement with our formula.
\item
It is known that for odd primes $p$ and $d\geq 2$,
$H^{(d-1)(p-1)}(B(\bbr^d,p);\bbf_p)$ is non-trivial and an isomorphic
image of $H^{(d-1)(p-1)}(\mathfrak{S}_p;\bbf_p)$ (Ossa \cite{ossa} and
Vassiliev \cite{vassiliev}).  Our result states that, at least for
even $d$, no higher homology can occur.  The cohomological dimension
of $B(\bbr^d,k)$ when using $\bbf_2$ coefficients is known to be
$(k-\alpha (k))\cdot (d-1)$ where $\alpha (k)$ is the number of 1's in
the dyadic decomposition of $k$ (see Roth \cite{frido}).  In the case
$d=2$, $B(\bbr^2,k)$ is the classifying space of Artin braid group
$B_k:= Br_k(\bbr^2)$ and is homotopy equivalent to a
$(k-1)$--dimensional CW complex so that $\cohdim (B(\bbr^2,k))\leq k-1$
in agreement with our calculation.
\end{enumerate}
\end{rem}

\begin{rem} The theorem applies to when $M=S^1$ and $U$ is either empty
or a single point. In that case $M-U\cong S^1,\bbr$. But one knows that
for $k\geq 1$, $B(S^1,k)\simeq S^1$ (\fullref{cns1}) 
and $B(\bbr,k)$ is contractible.
\end{rem}

\begin{cor}\label{twocomplexes} Let $S$ be a Riemann surface and
  $Q\subset S$ a finite subset.  Then $H^i(B(S-Q,k)) = 0$ if $i\geq k+1$ and
  $Q\cup\partial S\neq\emptyset$ ; or if $i> k+1$ and $Q\cup\partial
  S=\emptyset$.  \end{cor}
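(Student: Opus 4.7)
The corollary should follow almost immediately from Theorem~\ref{main3} applied with $d=2$. A Riemann surface is orientable (it carries a complex structure) and even dimensional, so by the final sentence of Theorem~\ref{main3} I may use the ordinary cohomological dimension $\cohdim$ in place of $\cohdim_{\pm\bbz}$. After perturbing any points of $Q$ that happen to lie on $\partial S$ slightly into the interior---this does not change the homotopy type of $B(S-Q,k)$---I may assume $Q\cap\partial S=\emptyset$ and take $M=S$, $U=Q$ in the theorem.

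The parameter $r$ entering Theorem~\ref{main3} is the connectivity of a nonempty connected space (either $S$ itself, or the quotient $S/(Q\cup\partial S)$), so by the blanket convention $r\geq 0$. Plugging $d=2$ into the bound of the theorem gives
$$
\cohdim\bigl(B(S-Q,k)\bigr) \leq
\begin{cases}
k-r+1 \leq k+1, & Q\cup\partial S=\emptyset,\\
k-r \leq k, & Q\cup\partial S\neq\emptyset,
\end{cases}
$$
which translates exactly into the two claimed vanishing ranges, $H^i=0$ for $i>k+1$ in the closed case and for $i\geq k+1$ otherwise.

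The only point requiring a little care is that Theorem~\ref{main3} is stated for compact $M$, whereas the corollary is formulated for an arbitrary Riemann surface. If $S$ is not already compact, I realize $S$ as the interior of a compact surface-with-boundary $\bar S$ of the same finite topological type and apply the theorem to $\bar S$ with the same finite set $U=Q$; the presence of $\partial\bar S$ automatically puts us in the ``$U\cup\partial M\neq\emptyset$'' case, producing the bound $\cohdim\leq k$ once more. I expect this compactification step to be the only mildly nontrivial part of the argument; everything else is a direct substitution into Theorem~\ref{main3}.
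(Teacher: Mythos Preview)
Your argument is correct: Theorem~\ref{main3} with $d=2$ and $r\geq 0$ immediately gives the bounds $\cohdim\leq k+1$ (closed case) and $\cohdim\leq k$ (otherwise), and the orientability clause lets you pass from $\cohdim_{\pm\bbz}$ to ordinary $\cohdim$. The small adjustments you make---perturbing points of $Q$ off the boundary, and compactifying an open $S$ to a compact surface with boundary---are legitimate and put you squarely inside the hypotheses of the theorem. (Minor omissions: Theorem~\ref{main3} assumes $k\geq 2$, so the case $k=1$ should be noted separately, which is trivial since an open surface retracts to a graph; and your compactification step tacitly assumes $S$ has finite topological type.)

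The paper, however, does \emph{not} prove the corollary this way. Its proof (given in \fullref{spec}) deliberately bypasses Theorem~\ref{main3} and instead works through the Eilenberg--Moore type spectral sequence of \fullref{specseq}, whose $E^1$--term is built from the reduced symmetric products $\bsp{i}\overline{S}$ and $\bsp{j}(\Sigma\overline{S})$. The connectivity of these pieces is controlled by \fullref{conntwo} (for the two-dimensional factor) and \fullref{connectivity2} (for the suspended, hence simply connected, factor), and the minimum connectivity across all terms with $i+2j=k$ yields $R_k\geq k-1$, which via duality gives the same vanishing range. The paper is explicit that this route is chosen for illustrative purposes---``This was in fact originally our approach to the cohomological dimension of braid spaces''---so your direct appeal to Theorem~\ref{main3} is the shorter and more natural derivation, while the paper's argument serves to demonstrate that the spectral sequence together with the symmetric-product connectivity bounds recovers the same information independently.
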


This corollary gives an extension of the ``finiteness'' result of
Napolitano \cite{nap1}.  When $S$ is an open surface, then $B(S,k)$ is a Stein
variety and hence its homology vanishes above the complex dimension;
ie, $H_i(B(S,k)) = 0$ for $i> k$. This also agrees with the above computed bounds.

The proof of \fullref{main3} relies on a useful connectivity result of
Nakaoka (\fullref{nakak}). We also use this result to produce sharp
connectivity bounds for the \textit{reduced} symmetric products
\fullref{bounds}.  Recall that $\sp{n}(X)$, the $n^{\rm th}$ symmetric product of
$X$, is the quotient of $X^n$ by the permutation action of the symmetric group
$\mathfrak{S}_n$ so that $B(X,n)\subset\sp{n}(X)$ is the subset of
configurations of distinct points. We always assume $X$ is based so there is
an embedding $\sp{n-1}(X)\hookrightarrow\sp{n}(X)$ given by adjoining the
basepoint, with cofiber $\bsp{n}(X)$ the ``$n^{\rm th}$ reduced symmetric'' product of
$X$. The following result expresses the connectivity of $\bsp{n}X$ in terms of
the connectivity of~$X$.

\begin{thm}\label{connectivity} Suppose $X$ is a based $r$--connected simplicial
  complex with $r\geq 1$. Then $\bsp{n}(X)$ is $(2n+r-2)$--connected. 
\end{thm}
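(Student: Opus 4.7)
The strategy is to compare the finite symmetric products with the infinite symmetric product $\spy(X)$ (homotopy equivalent, by Dold--Thom, to $\prod_{i\geq r+1}K(\tilde H_i(X),i)$) using a linear connectivity bound supplied by Nakaoka's theorem (\fullref{nakak}).

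The base case $n=1$ is immediate since $\bsp{1}(X)=X$ is $r$-connected, matching $2\cdot 1+r-2=r$. For $n\geq 2$ and $r\geq 1$, the spaces $\sp{k}(X)$ are all $r$-connected (a classical consequence of Dold--Thom), hence the cofiber $\bsp{n}(X)$ of $\sp{n-1}(X)\hookrightarrow\sp{n}(X)$ is simply connected; this will allow a final appeal to the Hurewicz theorem to convert a homology statement into connectivity.

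The main step is the comparison of two cofibration sequences sharing the same left-hand term:
$$\xymatrix{\sp{n-1}(X) \ar@{=}[d] \ar[r] & \sp{n}(X) \ar[d]^{i_n} \ar[r] & \bsp{n}(X) \ar[d]^{q_n} \\ \sp{n-1}(X) \ar[r] & \spy(X) \ar[r] & \spy(X)/\sp{n-1}(X).}$$
Nakaoka's theorem provides the linear bound that the canonical inclusion $i_m\colon \sp{m}(X)\hookrightarrow\spy(X)$ is a $(2m+r)$-equivalence. Applied with $m=n-1$, the long exact homology sequence of the lower pair forces $\spy(X)/\sp{n-1}(X)$ to be $(2n+r-2)$-connected. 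Applied with $m=n$, the five-lemma on the long exact homology sequences of the two pairs shows that the induced cofiber map $q_n$ is a $(2n+r)$-equivalence on homology. Combining, $\tilde H_i(\bsp{n}(X))=0$ for all $i\leq 2n+r-2$, and Hurewicz yields the desired connectivity.

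The main obstacle will be extracting Nakaoka's theorem in the precise quantitative form needed (a $(2m+r)$-equivalence, not just some weaker linear bound), since this is what makes the final constant sharp. As a sharpness check: for $X=S^2$ (so $r=1$) we have $\bsp{n}(S^2)=\bbc P^n/\bbc P^{n-1}=S^{2n}$, which is exactly $(2n-1)=(2n+r-2)$-connected; and for $X=S^3$, $n=2$ one can identify $\bsp{2}(S^3)\simeq\Sigma^4\bbr P^2$, which has a genuine $5$-cell and is exactly $4$-connected, again matching $2\cdot 2+2-2=4$.
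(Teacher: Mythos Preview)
Your argument is circular. The input you invoke --- that $i_m\colon \sp{m}(X)\hookrightarrow\spy(X)$ is a homology $(2m+r)$-equivalence --- is not what \fullref{nakak} says: that result concerns the reduced \emph{truncated} product $(Y^{(k)}/\Delta_{\rm fat})_{\mathfrak{S}_k}=\overline{TP}^k(Y)$ and asserts it is $(r+k-1)$-connected, a statement about a different functor altogether. More seriously, the statement you actually need is equivalent to the theorem itself. By Steenrod's splitting (\fullref{steenrod}) one has $\tilde H_*(\spy(X)/\sp{m}(X))\cong\bigoplus_{k>m}\tilde H_*(\bsp{k}(X))$, so ``$i_m$ is a $(2m+r)$-equivalence for every $m$'' unpacks precisely to ``$\tilde H_i(\bsp{k}(X))=0$ whenever $k>m$ and $i\leq 2m+r$''; taking $m=k-1$ recovers exactly $\conn(\bsp{k}(X))\geq 2k+r-2$. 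Your cofiber-sequence diagram chase is formally correct but only repackages the conclusion. You even flag this yourself (``the main obstacle will be extracting Nakaoka's theorem in the precise quantitative form needed''): that extraction \emph{is} the entire content of the theorem, which is why the paper presents it as an improvement over Welcher's weaker bound.

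The paper's proof earns the factor of $2$ by a genuine argument. Via Dold's theorem one replaces $X$ by $\Sigma Y$ with $Y$ only $(r-1)$-connected; an Arone--Dwyer collapse identifies $\bsp{n}(\Sigma Y)\simeq S^n\wedge_{\mathfrak{S}_n}(Y^{(n)}/\Delta_{\rm fat})$, and a relative Serre spectral sequence over the base $(Y^{(n)}/\Delta_{\rm fat})_{\mathfrak{S}_n}$ yields the bound: one $n$ from the $S^n$ in the fiber, and $n+r-2$ from \fullref{nakak} applied to the $(r-1)$-connected space $Y$.
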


In particular the embedding $\sp{n-1}(X)\lrar\sp{n}(X)$ induces
homology isomorphisms in degrees up to $(2n+r-3)$.  The proof of this
theorem is totally inspired from Kallel and Karoui \cite{kk} where
similar connectivity results are stated, and it uses the fact that the
homology of symmetric products only depends on the homology of the
underlying complex (Dold \cite{dold}).  Note that the bound $2n+r-2$
is sharp as is illustrated by the case $X=S^2$, $r=1$ and
$\bsp{n}(S^2)=S^{2n}$.  A slightly weaker connectivity bound than ours
can be found in Welcher \cite[Corollary 4.9]{welcher}.

Note that \fullref{connectivity} is stated for simply connected
spaces.  To get connectivity results for reduced symmetric products of
a compact Riemann surface for example we use geometric input from
Kallel and Salvatore \cite{ks2}. This applies to any two dimensional
complex.

\begin{prop}\label{conntwo} Let $X = \bigvee^wS^1\cup
(D^2_1\cup\cdots\cup D^2_r)$ be a two dimensional CW complex with one
skeleton a bouquet of $w$ circles.  Then $\bsp{n}X$ is
$(2n-\min(w,n)-1)$--connected.
\end{prop}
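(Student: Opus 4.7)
The plan is to combine \fullref{connectivity} with geometric input from Kallel and Salvatore \cite{ks2} on symmetric products of two-dimensional complexes, together with Dold's theorem \cite{dold} that the integral homology of $\sp{n}X$ depends only on that of $X$. The case $w=0$ is immediate: then $X$ is simply connected (its $1$-skeleton being a point), and \fullref{connectivity} applied with connectivity parameter $1$ gives that $\bsp{n}X$ is $(2n-1)$-connected, matching $2n-\min(0,n)-1=2n-1$. So assume $w\geq 1$.

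Since $X$ is generally not simply connected, \fullref{connectivity} does not apply directly. The first step is to check that for $n\geq 2$ the reduced symmetric product $\bsp{n}X$ is itself simply connected: the inclusion $\sp{n-1}X\hookrightarrow\sp{n}X$ induces the identity on $\pi_1\cong H_1(X)$, so by van Kampen the cofiber is simply connected. Hurewicz then reduces the connectivity statement to a homology computation, and by Dold's theorem we may replace $X$ by any space with the same integral homology, in particular by a ``wedge model'' of the form
\begin{equation*}
Y=\bigvee^{w'}S^1\vee\bigvee^{r'}S^2\vee M,
\end{equation*}
where $M$ is a Moore space accounting for torsion in $H_1$, $w'$ is the free rank of $H_1(X)$ (so $w'\leq w$, since attaching the $2$--disks can only decrease rank), and $r'$ is the rank of $H_2(X)$.

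Applied to $Y$, the standard splitting $\bsp{n}(A\vee B)\simeq\bigvee_{i+j=n}\bsp{i}A\wedge\bsp{j}B$ decomposes $\bsp{n}Y$ into a wedge of smash products. Each factor involving $\bsp{j}(\bigvee^{r'}S^2)$ has connectivity at least $2j-1$ by \fullref{connectivity}, while $\bsp{i}(\bigvee^{w'}S^1)$ is controlled in dimension by $\min(i,w')\leq\min(i,w)$ via the graph symmetric product theory exploited in \cite{ks2}, from which its connectivity can be read off. Applying $\conn(A\wedge B)\geq\conn A+\conn B+1$ and then minimizing over the wedge pieces with $i+j=n$ should produce the bound $2n-\min(w,n)-1$ uniformly across summands, giving the proposition.

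The hard part will be extracting the sharp connectivity of $\bsp{i}(\bigvee^{w'}S^1)$: only a dimension bound is immediate from the graph structure, and the geometric analysis in \cite{ks2} is needed to promote this to the connectivity statement that combines correctly with Dold's reduction. Sharpness of the overall bound is witnessed by the computation $\bsp{2}(S^1\vee S^1)\simeq S^2$, which is $1$--connected but not $2$--connected, matching the formula at $n=w=2$.
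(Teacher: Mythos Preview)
Your outline has a real gap: you introduce the Moore-space summand $M$ accounting for torsion in $H_1(X)$, but then drop it from the connectivity analysis, bounding only the two factors $\bsp{i}(\bigvee^{w'}S^1)$ and $\bsp{j}(\bigvee^{r'}S^2)$ in the wedge decomposition. When $H_1(X)$ has torsion (e.g.\ $X=\bbr P^2$, where $w=1$) this term cannot be ignored. It \emph{can} be handled within your framework --- for $M=M(\bbz/k,1)$ one has $H_*(M;\bbf_p)\cong H_*(S^1\vee S^2;\bbf_p)$ when $p\mid k$ and $\tilde H_*(M;\bbf_p)=0$ otherwise, so Dold over each prime gives $\bsp{c}M$ the $\bbf_p$-homology of $\bsp{c}(S^1\vee S^2)\simeq S^{2c}\vee S^{2c-1}$, hence connectivity $2c-2$, which then feeds correctly into the smash estimate --- but you have not carried this out. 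You have also misidentified the ``hard part'': $\bsp{i}(\bigvee^{w'}S^1)$ is in fact elementary and needs no appeal to \cite{ks2}, since $\bsp{a}S^1\simeq *$ for $a\geq 2$ (the inclusion $\sp{a-1}S^1\hookrightarrow\sp{a}S^1$ is an equivalence of circles), so the wedge formula gives $\bsp{i}(\bigvee^{w'}S^1)\simeq\bigvee^{\binom{w'}{i}}S^i$ for $i\leq w'$ and a point otherwise.

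The paper's proof is more direct and avoids these case distinctions entirely. It invokes from \cite{ks2} an explicit multiplicative cellular chain complex for $\sp{n}X$, with a degree-zero generator $v_0$, degree-one generators $e_1,\ldots,e_w$ (satisfying $e_i\star e_i=0$), and degree-$2s$ generators $\sp{s}D_i$ for each two-cell. Passing to the reduced complex for $\bsp{n}X$ kills the $v_0$-multiples, leaving cells $e_{i_1}\star\cdots\star e_{i_t}\star\sp{s_1}D_{j_1}\star\cdots\star\sp{s_l}D_{j_l}$ with $t+\sum s_m=n$ and dimension $t+2\sum s_m=2n-t$. Since necessarily $t\leq\min(w,n)$, the minimal cell dimension is $2n-\min(w,n)$ and the connectivity bound follows immediately, treating the circle, disc, and torsion contributions uniformly without any Dold reduction.
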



\subsection{Puncturing manifolds}
We give generalizations and a proof simplification of results of
Napolitano \cite{nap1,nap2}.  For $S$ a two dimensional topological
surface, $p$ and the $p_i$ points in $S$, it was shown in \cite{nap1}
that, for field coefficients $\bbf$,
\begin{equation}\label{secondsplit}
H^j(B(S -\{p_1,p_2\},n);\bbf )\cong
\bigoplus_{t=0}^nH^{j-t}(B(S -\{p\},n-t);\bbf )\ .
\end{equation}
Here and throughout $H^* = 0$ when $*<0$ and $B(X,0)$ is basepoint.
When $S$ is a closed orientable surface and
$\bbf=\bbf_2$, \cite{nap1} establishes furthermore a splitting:
\begin{equation}\label{firstsplit}
H^j(B(S,n);\bbf_2)\cong
H^{j}(B(S -\{p\},n);\bbf_2)\oplus H^{j-2}(B(S -\{p\},n-1);\bbf_2)
\end{equation}
Similar splittings occur in 
Cohen \cite{cohen2} and Gorjunov \cite{goryunov}. These splittings as we show extend
to any closed topological manifold $M$ and to any number of
punctures. If $V$ is a vector space, write
$V^{\oplus k}:= V\oplus\cdots\oplus V$ ($k$--times). Given
positive integers $r$ and $s$, we write $p(r,s)$ the number of ways we can
partition $s$ into a sum of $r$ \textit{ordered} positive (or null) integers.
For instance $p(1,s)=1$, $p(2,s)=s+1$ and $p(r,1)=r$.

\begin{thm}\label{main} Let $M$ be a closed connected manifold of
  dimension $d$ and $p\in M$. Then:
\begin{equation}\label{main11}
H^j(B(M,n);\bbf_2)\cong
H^{j}(B(M-\{p\},n);\bbf_2)\oplus H^{j-d}(B(M-\{p\},n-1);\bbf_2)
\end{equation}
If moreover $M$ is oriented and even dimensional, then:
\begin{align}\label{main22}
H^j(B(M-&\{p_1,\cdots, p_k\},n);\bbf )\\
&\cong \bigoplus_{0\leq r\leq n}
H^{j - (n-r)(d-1)}
(B(M-\{p\},r);\bbf )^{\oplus p(k-1,n-r)}\notag
\end{align}
For an arbitrary closed manifold, \eqref{main22}
is still true with $\bbf_2$--coefficients.
\end{thm}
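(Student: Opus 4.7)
The proof exploits the duality between braid spaces and truncated symmetric products from \fullref{tp}: for a closed $d$--manifold $M$ and a closed $U\subset M$, the cohomology $H^j(B(M-U,n);\pm\bbf)$ is identified with the cohomology of $\tp{n}(M/U)$ in a complementary degree. The key topological input for \eqref{main22} is that, for any $k\geq 1$, identifying $k$ points of the connected manifold $M$ to one basepoint produces a space with the homology type of $M_*\vee\bigvee^{k-1}S^1$, the extra circles arising from arcs between the collapsed points. Under the duality, puncturing of $M$ thus becomes a wedge-summand operation on $\tp{n}(M/U)$, which is the whole point of introducing truncated products.

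For \eqref{main22}, I would apply $\tp{n}$ to this wedge decomposition and invoke the standard B\"odigheimer--Cohen--Milgram wedge formula
\[
\tp{n}(X_1\vee\cdots\vee X_k)\simeq\bigvee_{i_1+\cdots+i_k=n}\tp{i_1}(X_1)\wedge\cdots\wedge\tp{i_k}(X_k).
\]
Taking $X_1=M_*$ and $X_2=\cdots=X_k=S^1$, each choice $(i_2,\dots,i_k)$ with $i_2+\cdots+i_k=s=n-r$ contributes one summand $\tp{r}(M_*)\wedge S^s$ (using that $\tp{i}(S^1)\simeq S^i$), and the number of such ordered choices is exactly $p(k-1,s)$. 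Under the duality the smash factor $S^s$ translates into a cohomological shift by $s(d-1)$, reflecting that a one-dimensional class in $M/U$ becomes a $(d-1)$-dimensional class on the braid-space side; the factor $\tp{r}(M_*)$ dualises to $H^*(B(M-\{p\},r))$ by the same duality applied to the once-punctured case. Collecting these contributions and re-indexing by $r$ yields exactly \eqref{main22}.

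For \eqref{main11}, the closed case $U=\emptyset$ is handled by the cofiber sequence obtained from collapsing the complement of a small open disk around $p$ inside $M$, which on the dual side produces $\tp{n}(S^d)$ as the cofiber piece; this contributes a single shift by $d$ rather than $d-1$, reflecting that $p$ is an interior point with a full $d$-dimensional disk neighbourhood, and recovers the two-term splitting \eqref{main11}.

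The main technical obstacle is showing that these wedge and cofiber decompositions give genuine \emph{direct-sum} splittings on cohomology with the stated coefficients, rather than mere extensions. Over $\bbf_2$ all orientation twists in the duality disappear, which establishes both \eqref{main11} and \eqref{main22} for arbitrary closed $M$. Over a general field $\bbf$ one further needs the $\mathfrak{S}_n$-action on the orientation sheaf described in Remark~\ref{twisted} to be trivial, and this is precisely what the hypothesis ``$M$ orientable and even-dimensional'' guarantees; the sign of a permutation $\sigma$ enters as $(-1)^{\operatorname{sg}(\sigma)\cdot d}=1$ for even $d$, so the direct sum survives. Once these splittings are in place, the combinatorial identification of the multiplicity with $p(k-1,n-r)$ is a straightforward count of ordered non-negative compositions coming from the wedge formula, and the induction on $k$ starting from the tautological $k=1$ case completes the argument.
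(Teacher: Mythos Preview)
Your treatment of \eqref{main22} is essentially the paper's argument: duality identifies $H^*(B(M-Q_k,n))$ with $\tilde H_*(\overline{TP}^n(M/Q_k))$, the homotopy equivalence $M/Q_k\simeq M\vee\bigvee^{k-1}S^1$ feeds into the wedge formula $\overline{TP}^n(X\vee Y)=\bigvee_{r+s=n}\overline{TP}^r(X)\wedge\overline{TP}^s(Y)$, and Mostovoy gives $\overline{TP}^s(S^1)=\bbr P^s/\bbr P^{s-1}=S^s$, producing the suspension shift. Two notational caveats: the wedge formula and the identification with $S^i$ hold only for the \emph{reduced} functors $\overline{TP}^n$, not for $TP^n$ as you have written (unreduced $TP^i(S^1)\cong\bbr P^i$); and the role of ``orientable, even-dimensional'' is solely to make the duality of \fullref{duality} hold with untwisted $\bbf$--coefficients. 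Once that is secured, a wedge of spaces gives a direct sum in homology automatically, so no separate splitting argument is needed for \eqref{main22}.

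The genuine gap is in \eqref{main11}. Your cofiber-sequence sketch at best yields a long exact sequence---this is exactly McDuff's sequence of \fullref{longexact}, and the paper explicitly remarks (immediately after the example following \fullref{main}) that it does \emph{not} split over a general coefficient ring. You attribute the $\bbf_2$--splitting to the vanishing of orientation twists, but that is the wrong mechanism: over $\bbf_2$ the duality already holds for arbitrary $M$, yet one must still show that the exact sequence of the triple $(TP^{n-2}M,TP^{n-1}M,TP^nM)$ breaks up. The paper does this via the Milgram--L\"offler transfer (\fullref{property}, part~(2)): a retract $H_*(TP^nM;\bbf_2)\to H_*(TP^{n-1}M;\bbf_2)$, specific to $\bbf_2$--coefficients, gives $H_*(TP^nM,TP^{n-2}M;\bbf_2)\cong\tilde H_*(\overline{TP}^nM;\bbf_2)\oplus\tilde H_*(\overline{TP}^{n-1}M;\bbf_2)$, and applying duality again yields \eqref{main11}. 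Without invoking this transfer your argument for \eqref{main11} is incomplete.
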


\begin{rem}
As an example we can set $M=S^2, k=2=d$ and obtain the
additive splitting $H^j(B(\bbc^*,n);\bbf )\cong
\bigoplus_{0\leq r\leq n} H^{j - (n-r)} (B(\bbc ,r);\bbf )$\ 
as in \eqref{secondsplit}, where
$\bbc^*$ is the punctured disk (this isomorphism holds integrally
according to \cite{goryunov}).  Note that the left hand side is the
homology of the hyperplane arrangement of `Coxeter type'' $B_n$;
that is $B(\bbc^*,n)$ is an Eilenberg--MacLane space
$K(Br_n(\bbc^*),1)$ with fundamental group isomorphic to the subgroup of 
Artin's braids $Br_{n+1}(\bbc )$ consisting of those braids which leave
the last strand fixed. It can be checked that the abelianization
of this group for $n\geq 2$ is $\bbz^2$ which is consistent with the calculation
of $H^1$ obtained from the above splitting.
\end{rem}

Napolitano's approach to \eqref{secondsplit} is through spectral
sequence arguments and ``resolution of singularities'' as in Vassiliev
theory.  Our approach relies on a simple geometric manipulation of the
truncated symmetric products as discussed earlier (see \fullref{punctured}). \fullref{main} is a consequence of combining a
Poincar\'e--Lefshetz duality statement, the identification of truncated
products of the circle with real projective space, Mostovoy
\cite{mostovoy}, and a homological splitting result due to Steenrod
(\fullref{tp}).  Note that the splitting in (3) is no longer true
with coefficients other than $\bbf_2$ and is replaced in general by a
long exact sequence (\fullref{longexact}).

\subsection{Homological stability}\label{homstab}
This is the third and last part of the paper.
 For $M$ a closed smooth manifold of dimension
$\dim M = d$, let $\tau^+M$ be the fiberwise one-point
compactification of the tangent bundle $\tau M$ of $M$ with fiber $S^d$.
We write $\Gamma (\tau^+M)$ the space of sections of $\tau^+M$. Note that this
space has a preferred section (given by the points at infinity).
There are now so called ``scanning'' maps for any $k\in\bbn$
(Mcduff \cite{dusa}, B{\"o}digheimer, Cohen and Taylor \cite{bct},
Kallel \cite{quarterly})
\begin{equation}\label{firstscan}
S_k \co  B(M,k)\lrar \Gamma_k (\tau^+M )
\end{equation}
where $\Gamma_k(\tau^+M)$ is the component of degree $k$ sections (see
\fullref{scan}).  In important work, McDuff shows that $S_k$
induces a homology isomorphism through a range that increases with
$k$. In many special cases, this range needs to be made explicit and
this is what we do next.

We say that a map $f\co  X\rightarrow Y$ is homologically $k$--connected (or a
homology equivalence up to degree $k$) if $f_*$ in homology is an isomorphism
up to and including degree $k$.

\begin{prop}\label{main4}
Let $M$ be a closed manifold of dimension $d\geq 2$ and $k\geq 2$.
Assume the map $+\co B(M-p,k)\lrar B(M-p,k+1)$ which consists of adding a point
near $p\in M$ (see \fullref{stability}) is
homologically $s(k)$--connected. Then
scanning $S_k$ is homologically $s(k-1)$--connected. Moreover
$s(k)\geq [k/2]$ (Arnold). 
\end{prop}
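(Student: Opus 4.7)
The plan is to derive the range for the scanning map $S_k\co B(M,k)\to \Gamma_k(\tau^+M)$ from the stability range on the open manifold $W:=M-p$, with McDuff--Segal's group completion theorem as the main engine: the hypothesis is what turns the abstract completion statement into a quantitative one at finite $k$.

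First, I apply McDuff--Segal to $W$: scanning $B(W,k)\to \Gamma_k^c(\tau^+W)$ becomes a homology equivalence after passing to the colimit along the stabilization maps $+_k\co B(W,k)\to B(W,k+1)$. By hypothesis each $+_k$ is homologically $s(k)$-connected, so a routine telescope argument shows that the natural map $B(W,k)\to \operatorname{colim}_jB(W,j)$ is homologically $s(k-1)$-connected; composing, scanning on $W$ in degree $k$,
\[ S_k'\co B(W,k)\to \Gamma_k^c(\tau^+W),\]
is itself homologically $s(k-1)$-connected.

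Second, I transfer this conclusion to the closed case. Choose a small closed disk $D\subset M$ centred at $p$, so $M-D^\circ\simeq W$. Including configurations in $W$ into configurations in $M$, and extending sections over $W$ by the preferred infinity section over $D$, gives a commutative square whose left vertical is $S_k'$ and right vertical is $S_k$. A Mayer--Vietoris / cofiber-sequence comparison (equivalently, the quasi-fibration argument of McDuff applied to the decomposition $M=W\cup D$) shows that the two horizontal cofibers agree in the range below $s(k-1)$, the missing local contribution being a scanning map over $D\cong\bbr^d$ where Arnold's bound supplies the needed connectivity. The five-lemma then yields that $S_k$ is homologically $s(k-1)$-connected. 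For the Arnold estimate $s(k)\geq [k/2]$, note that a tubular neighborhood of $p$ in $M$ is diffeomorphic to $\bbr^d$, so ``add a point near $p$'' on $B(W,k)$ is locally modelled by Arnold's $[k/2]$-connected stabilization on $B(\bbr^d,k)$, and Segal's scanning argument transports this local bound to $B(W,k)$ globally.

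The main obstacle is making the Mayer--Vietoris comparison sharp enough. One must identify the discrepancy between scanning on the closed $M$ and scanning on the open $W$ with a local scanning map in a neighbourhood of $p$, and then show that this local piece has connectivity at least $s(k-1)$. This is precisely where Arnold's input re-enters, and it also explains why the output range drops from $s(k)$ to $s(k-1)$: one index is spent in passing from the hypothesised stabilisation to the colimit comparison on $W$.
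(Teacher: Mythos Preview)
Your overall strategy matches the paper's: establish the range for scanning on the open piece $W=M-p$ via McDuff--Segal, then compare with scanning on the closed $M$ through a map of long exact sequences and apply the five-lemma. However, your execution of the second step contains a genuine misidentification.

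The cofiber of $B(W,k)\hookrightarrow B(M,k)$ is \emph{not} governed by a ``local scanning over $D\cong\bbr^d$'': by \fullref{longexact} it is $\Sigma^d B(W,k-1)_+$, so the third term in the top long exact sequence is $H_{*-d}(B(W,k-1))$. On the section side, the inclusion $\Gamma^c_k(\tau^+W)\hookrightarrow\Gamma_k(\tau^+M)$ is the fibre inclusion of the restriction fibration over $S^d$ (see \eqref{fibration}), and the relevant long exact sequence is its Wang sequence, whose third term is $H_{*-d}(\Gamma^c_k(\tau^+W))$. Scanning sends one sequence to the other, and the vertical map on that third term is scanning on $W$ for $k-1$ points (composed with a component shift). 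This is where $s(k-1)$ enters: one needs $H_q(B(W,k-1))\to H_q(\Gamma^c_{k-1}(\tau^+W))$ to be an isomorphism, and \fullref{dusa1} gives this only up to $s(k-1)$. Your explanation that ``one index is spent in passing from the hypothesised stabilisation to the colimit comparison on $W$'' is therefore incorrect: the telescope argument on $W$ already gives $s(k)$-connectivity (not $s(k-1)$) for $S'_k$, and the drop to $s(k-1)$ occurs in the five-lemma step because the diagram involves $B(W,k-1)$ as well as $B(W,k)$. Likewise, Arnold's bound for $B(\bbr^d,\cdot)$ plays no role in the comparison; it only furnishes the universal lower bound $s(k)\geq[k/2]$.
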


When $k=1$, we give some information about $S_1 \co 
M\lrar\Gamma_1(\tau^+M)$ in \fullref{s1}. Note that $s(k)$ is an
increasing function of $k$.  Arnold's inequality $s(k)\geq [k/2]$ is
proven by Segal in \cite{segal1}.  This bound is far from being
optimal in some cases since for instance, for $M$ a compact Riemann
surface, $s(k) = k-1$ (Kallel and Salvatore \cite{ks}).  Note that the
actual connectivity of the map $+\co B(M-p,k)\lrar B(M-p,k+1)$ is often
$0$ since if $\dim M > 2$, this map is never trivial on $\pi_1$ (see
\fullref{folklore}).

The utility of \fullref{main4} is that in some particular cases, knowledge
of the homology of braid spaces in a certain range informs on the homology of
some mapping spaces. Here's an interesting application to computing the
abelianization of the braid group of a surface (this was an open problem for
some time).

\begin{cor} \label{cor1} For $S$ a compact Riemann surface of genus $g\geq 1$,
and $k\geq 3$, we have the isomorphism:
$H_1(B(S,k);\bbz ) = \bbz_2\oplus\bbz^{2g}$.
\end{cor}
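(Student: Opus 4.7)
The plan is to combine \fullref{main4} with Kallel--Salvatore's stability estimate for Riemann surfaces and a computation of $H_1$ of the section space $\Gamma_k(\tau^+ S)$.

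First, by \cite{ks} the stabilization map $+\co B(S-p, k) \to B(S-p, k+1)$ is homologically $(k-1)$-connected, so in the notation of \fullref{main4} we may take $s(k) = k - 1$. Plugging this in, the scanning map $S_k\co B(S, k) \to \Gamma_k(\tau^+ S)$ is homologically $s(k-1) = (k-2)$-connected. For $k \geq 3$ we have $k-2 \geq 1$, and scanning therefore induces an isomorphism
\[
H_1(B(S,k); \bbz) \cong H_1(\Gamma_k(\tau^+ S); \bbz).
\]
This is precisely where the hypothesis $k \geq 3$ enters.

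It remains to identify the right-hand side with $\bbz_2 \oplus \bbz^{2g}$. The space $\Gamma_k(\tau^+ S)$ is the degree-$k$ component of the space of sections of the $S^2$-bundle $\tau^+ S \to S$. I would use the evaluation fibration at a point $x_0 \in S$,
\[
F_k \lrar \Gamma_k(\tau^+ S) \lrar \tau^+_{x_0} S = S^2,
\]
and analyze $F_k$ via the cellular decomposition $S \simeq \bigvee^{2g} S^1 \cup_w D^2$. Since the tangent bundle is trivial over a wedge of circles, sections over the 1-skeleton taking value $\infty$ at $x_0$ form $(\Omega S^2)^{2g}$, and the 2-cell attached by the commutator relator $w$ supplies a single additional relation of Whitehead-product type. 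Splicing with the long exact sequence
\[
\pi_2(S^2) \lrar \pi_1(F_k) \lrar \pi_1(\Gamma_k(\tau^+ S)) \lrar 0
\]
and abelianizing then yields the claimed answer.

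The main obstacle is pinning down the $\bbz/2$ torsion. Concretely, the obstruction to extending across the 2-cell lands in a quotient of $\pi_3(S^2) = \bbz$ by the Whitehead square $[\iota_2,\iota_2]$, and the identity $[\iota_2,\iota_2] = 2\eta$ forces a $\bbz/2$ quotient; the free summand $\bbz^{2g}$ arises from $H^1(S;\pi_2 S^2) = H^1(S;\bbz) = \bbz^{2g}$, contributed by the loops generating $\pi_1 S$. Carrying out this identification via a Federer-type spectral sequence argument, with correct bookkeeping of signs and of the non-trivial Euler class $\chi(S) = 2-2g$, is the technical core of the computation.
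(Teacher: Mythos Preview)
Your reduction via \fullref{main4} is the paper's own strategy, and either stability bound (Arnold's $s(k)\geq [k/2]$ or the sharper $s(k)=k-1$ from \cite{ks}) gives the needed isomorphism on $H_1$ once $k\geq 3$. Where you diverge from the paper, and where the real gap lies, is in the identification of $H_1(\Gamma_k(\tau^+S))$.

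You overlook that $\tau^+S$ is a \emph{trivial} $S^2$--bundle: since $S$ is stably parallelizable (it embeds in $\bbr^3$ with trivial normal bundle), $\tau S\oplus\epsilon^1$ is trivial and hence its unit sphere bundle $\tau^+S$ is $S\times S^2$. So $\Gamma(\tau^+S)\simeq\Map(S,S^2)$, and the Euler class you worry about plays no role at the level of the sphere bundle. The paper then simply quotes the known presentation of $\pi_1(\Map_k(S,S^2))$ from \cite{contemp}: it is an extension $0\to\bbz_{2|k|}\to\pi\to\bbz^{2g}\to 0$ with central generator $\tau$, torsion-free generators $e_1,\ldots,e_{2g}$, and nonzero commutators $[e_i,e_{g+i}]=\tau^2$; abelianizing kills $\tau^2$ and yields $\bbz_2\oplus\bbz^{2g}$ directly. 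Your Federer/obstruction outline is pointed in the right direction and would, if carried through, essentially rederive that presentation---but as written you explicitly defer ``the technical core of the computation,'' so the argument is incomplete. Recognizing the triviality of $\tau^+S$ and invoking the mapping-space computation closes the gap in one line.
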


\begin{proof} $\tau^+S$ is trivial since $S$ is stably parallelizable
  and $\Gamma (\tau^+S) \simeq \Map (S, S^2)$. Suppose $S$ has odd genus,
  then $S_k \co  H_1(B(S,k))\lrar H_1(\Map_k(S,S^2))$ is degree preserving
(where degree is $k$)
  and according to \fullref{main4} it is an isomorphism when
  $k\geq 3$ using the bound provided by Arnold.
  But $\pi:=\pi_1(\Map_k(S,S^2))$ was
  computed in \cite{contemp} and it is some extension
  $$0\lrar\bbz_{2|k|}\lrar\pi\lrar \bbz^{2g}\lrar 0$$
  with a generator $\tau$ and torsion free generators $e_1,\ldots, e_{2g}$
  with non-zero commutators $[e_i, e_{g+i}] = \tau^2$ and with $\tau^{2|k|} = 1$.
  Its abelianization $H_1$ is
  $\bbz^{2g}\oplus\bbz_2$ as desired. When $g$ is even,
  $S_k \co  B(S,k)\lrar \Map_{k-1}(S,S^2)$
  decreases degree by one (see \fullref{sectionspace})
  but the argument and the conclusion are still the same.
\end{proof}

\begin{rem} The above corollary is also a recent calculation of
Bellingeri, Gervais and Guaschi \cite{bellingo} which is more
algebraic in nature and relies on the full presentation of the braid
group $\pi_1(B(S,k))$ for a positive genus Riemann surface
$S$.\end{rem}

\begin{exam} We can also apply \fullref{main4} to the case when $M$ is a
sphere $S^n$.  Write $\Map(S^n,S^n) = \coprod_{k\in\bbz}
\map{k}(S^n,S^n)$ for the space of self-maps of $S^n$;
$\map{k}(S^n,S^n)$ being the component of degree $k$ maps.  Since
$\tau^+S^n$ is trivial there is a homeomorphism $\Gamma (\tau^+S^n
)\cong \Map(S^n,S^n)$. However and as pointed out by Salvatore in
\cite{paolo}, one has to pay extra care about components :
$\Gamma_k(\tau^+S^n)\cong \map{k}(S^n,S^n)$ if $n$ is odd and
$\Gamma_k(\tau^+S^n)\cong \map{k-1}(S^n,S^n)$ if $n$ is even (see
\fullref{sectionspace}).  Let $p(n)=1$ if $n$ is even and $0$ if
$n$ is odd.  Vassiliev \cite{vassiliev} checks that
$H_*(B(\bbr^n,k);\bbf_2)\lrar$ $ H_*(B(\bbr^n,k+1);\bbf_2)$ is an
isomorphism up to degree $k$ and so we get that the map of the $k^{\rm th}$
braid space of the sphere into the higher free loop space
$$B(S^n,k)\lrar \map{k-p(n)}(S^n,S^n)$$ is a mod--$2$ homology
equivalence up to degree $k-1$.  
The homology of $\Map(S^n,S^n)$ is worked out for all field coefficients in
\cite{paolo}.
\end{exam}

\begin{rem} The braid spaces fit into a filtered construction
$$B(M,n)=\co  B^1(M,n)\hookrightarrow B^2(M,n)
\hookrightarrow\cdots\hookrightarrow B^n(M,n):=\sp{n}(M)$$ where
$B^p(M,n)$ for $1\leq p\leq n$ is defined to be the subspace
\begin{equation}\label{spnd}
\{[x_1,\ldots, x_n]\in\sp{n}(M)\ |\ \hbox{no more than
$p$ of the $x_i$'s are equal}\}\ .
\end{equation}
Many of our results can be shown to extend with
straightforward changes to $B^p(M,n)$ and $p\geq 1$ when $M$ is a compact
Riemann surface. Some detailed statements and calculations can be found in 
\cite{ks}.  
\end{rem}

{\bf Acknowledgements}\qua  We are grateful to the referee for his
careful reading of this paper.  We would like to thank Toshitake Kohno,
Katsuhiko Kuribayashi and Dai Tamaki for organizing two most enjoyable
conferences first in Tokyo and then in Matsumoto.  Fridolin Roth, Daniel
Tanr\'e and Stefan Papadima have motivated part of this work with relevant
questions. We finally thank Fridolin and Paolo Salvatore for 
commenting through an early version of this paper.


\section{Basic examples and properties}\label{braids}

As before we write an element of $\sp{n}(X)$ as an unordered $n$--tuple
of points\break $[x_1,\ldots, x_n]$ or sometimes also as an abelian finite
sum $\sum x_i$ with $x_i\in X$.  For a closed manifold $M$,
$\sp{n}(M)$ is again a closed manifold for $n>1$ if and only if $M$ is of
dimension two, Wagner \cite{wagner}.  We define
$$B(M,n) = \{[x_1,\ldots, x_n]\in\sp{n}(M), x_i\neq x_j, i\neq j\}\ .$$
It is convenient as well to define the ``ordered'' $n$--fold configuration space
$F(M,n)= M^n - \Delta_{\rm fat}$ where
\begin{equation}\label{fat}
\Delta_{\rm fat}:= \{(x_1,\ldots, x_n)\in M^n\ |\ x_i=x_j\ \hbox{for some}\
i=j\}
\end{equation}
is the \textit{fat diagonal} in $M^n$.
The configuration space $B(M,n)$ is obtained as the quotient
$F(M,n)/\mathfrak{S}_n$ under the free permutation action of $\mathfrak{S}_n$
\footnote{In the early literature on embedding theory, Feder \cite{feder}, 
$B(M,2)$ was referred to
as the ``reduced symmetric square''.}.
Both $F(M,n)$ and $B(M,n)$ are (open) manifolds of dimension
$nd$, $d=\dim M$.

Next are some of the simplest non-trivial braid spaces one can describe.

\begin{lem}\label{c2sn} $B(S^n,2)$ is an open $n$--disc bundle over $\bbr P^n$.
When $n=1$, this is the open M\"{o}bius band (see \fullref{cns1}). 
\end{lem}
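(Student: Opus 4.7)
The plan is to identify $B(S^n,2)$ explicitly with the open unit disc bundle of $\gamma_1^{\perp}$, the orthogonal complement of the tautological line bundle $\gamma_1\to\bbr P^n$ inside the trivial bundle $\bbr P^n\times\bbr^{n+1}$. The geometric observation driving this is that for distinct $x,y\in S^n\subset\bbr^{n+1}$ the chord direction $\bbr(y-x)$ is a well-defined line, and the midpoint $m=(x+y)/2$ is automatically orthogonal to it since $\langle x+y,y-x\rangle=|y|^2-|x|^2=0$; moreover $|m|^2=(1+\langle x,y\rangle)/2<1$ whenever $x\ne y$.

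First I would define
$$
\Phi\co B(S^n,2)\lrar E:=\{(\ell,m)\in\bbr P^n\times\bbr^{n+1}\mid m\perp\ell,\ |m|<1\},\quad \{x,y\}\mapsto\bigl(\bbr(y-x),(x+y)/2\bigr),
$$
noting that both coordinates are invariant under the swap $x\leftrightarrow y$. For the inverse $\Psi$, given $(\ell,m)\in E$ I would pick a unit vector $v$ spanning $\ell$, set $t=2\sqrt{1-|m|^2}>0$, and define $\Psi(\ell,m)=\{m+(t/2)v,\,m-(t/2)v\}$. Since $|m\pm(t/2)v|^2=|m|^2+t^2/4=1$, each point lies on $S^n$, and swapping $v\leftrightarrow -v$ merely permutes the two points, so $\Psi$ descends to unordered pairs. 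A short check shows $\Phi$ and $\Psi$ are mutually inverse.

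To upgrade this bijection to a homeomorphism of bundles over $\bbr P^n$, I would work locally: over any open $U\subset\bbr P^n$ trivializing the double cover $S^n\to\bbr P^n$, pick a continuous lift $v\co U\to S^n$, so that $\Psi|_U$ becomes an explicit continuous formula in $(\ell,m)$; continuity of $\Phi$ is immediate. By construction $E$ is the open unit disc bundle of the rank--$n$ bundle $\gamma_1^{\perp}\to\bbr P^n$, which establishes the first claim. For $n=1$, $\gamma_1^{\perp}$ is one of the two real line bundles over $\bbr P^1$; the Whitney formula gives $w(\gamma_1^{\perp})=w(\gamma_1)^{-1}=(1+a)^{-1}=1+a$ in $H^*(\bbr P^1;\bbf_2)$, so $w_1(\gamma_1^{\perp})\ne 0$ and $\gamma_1^{\perp}$ is the M\"obius line bundle, whose open unit disc bundle is the open M\"obius band.

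The only real subtlety is the sign ambiguity in choosing $v\in\ell$ when writing $\Psi$, and the verification that the resulting unordered pair is invariant under $v\mapsto -v$; this compatibility is precisely what makes $\Phi$ a genuine bundle isomorphism rather than a mere fiberwise bijection.
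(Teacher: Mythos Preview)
Your proof is correct and uses the same projection to $\bbr P^n$ as the paper, namely the chord direction $\ell=\bbr(y-x)$. The difference is in how the fiber is parameterized: the paper identifies the fiber over $\ell$ with an open hemisphere of $S^n$ (take one endpoint of the pair; the ambiguity $a\leftrightarrow b$ is the reflection across $\ell^{\perp}$), whereas you parameterize by the midpoint $m=(x+y)/2$, which lands in the open unit disc of $\ell^{\perp}$. These are equivalent via orthogonal projection of the hemisphere onto $\ell^{\perp}$, but your version has the advantage of naming the bundle explicitly as the disc bundle of $\gamma_1^{\perp}$; the paper's parenthetical ``dual tautological bundle'' is less precise. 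For $n=1$ the paper simply refers forward to the description of $\sp{2}(S^1)$ as the closed M\"obius band, while your Stiefel--Whitney argument is a self-contained alternative.
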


\begin{proof} There is a surjection $\pi \co  B(S^n,2)\lrar\bbr P^n$ sending
  $[x,y]$ to the unique line $L_{[x,y]}$ passing through the origin and
  parallel to the non-zero vector $x-y$.  The preimage $\pi^{-1}(L_{[x,y]})$
  consists of all pairs $[a,b]$ such that $a-b$ is a multiple of $x-y$.
This can be identified with an ``open'' hemisphere determined   
by the hyperplane orthogonal to $L_{[x,y]}$ (ie $B(S^n,2)$
can be identified with the dual tautological bundle over $\bbr P^n$).
\end{proof}

\begin{exam}\label{c2rn} Similarly we can see that
$B(\bbr^{n+1},2)\simeq\bbr P^{n}$ and that
$B(S^n,2)\hookrightarrow B(\bbr^{n+1},2)$ is a deformation retract.
Alternatively one can see directly that
$B(S^n,2)\simeq\bbr P^n$ for there are an inclusion $i$ and a retract $r$:
\begin{eqnarray*}
i \co  S^n\hookrightarrow F(S^n,2)\ \ &,&\ \ \  r \co  F(S^n,2)\lrar S^n\cr
x\longmapsto (x,-x)\ \ &&\ \ \ \ \ \ \  (x,y)\mapsto {x-y\over |x-y|}
\end{eqnarray*}
Identify $S^n$ with $i(S^n)$ as a subset of $F(S^n,2)$.
Then $F(S^n,2)$ deformation retracts onto this subset via
 $$
 f_t(x,y) = \left({x-ty\over |x-ty|} , {y-tx\over |y-tx|}\right)
 $$
 (which one checks is well-defined). We have that $f_t$ is
 $\bbz_2$--equivariant with respect to the involution $(x,y)\mapsto
 (y,x)$, that $f_0=id$ and that $f_1 \co  F(S^n,2)\lrar S^n$ is
 $\bbz_2$--equivariant with respect to the antipodal action on $S^n$.  That
 is $S^n$ is a $\bbz_2$--equivariant deformation retraction of $F(S^n,2)$ which
 yields the claim.
\end{exam}

\begin{exam} $B(\bbr^2,3)$ is up to homotopy the
  complement of the trefoil knot in $S^3$.  \end{exam}

\begin{exam}\label{c2rp2} 
There is a projection $B(\bbr P^2,2)\lrar\bbr P^2$
which, to any two distinct lines through the origin in $\bbr^3$, associates
the plane they generate and this is an element of the Grassmann manifold
$Gr_2(\bbr^3)\cong Gr_1(\bbr^3)
= \bbr P^2$. The fiber over a given plane
parameterizes various choices of two distinct lines in
that plane and that is $B(\bbr P^1,2)=B(S^1,2)$.
As we just discussed, this is an open M\"{o}bius band $M$ and
$B(\bbr P^2,2)$ fibers over $\bbr P^2$ with fiber $M$ (see Feder \cite{feder}).
Interestingly $\pi_1(B(\bbr P^2,2))$ is a quaternion
group of order $16$ (Wang \cite{wang}).
\end{exam}

To describe the braid spaces of the circle we can consider the 
multiplication map:
$$m \co  \sp{n}(S^1)\lrar S^1\ \ ,\ \ [x_1,\ldots, x_n]\mapsto
x_1x_2\cdots x_n$$ Morton \cite{morton} shows that $m$ is a locally
trivial bundle with fiber the closed $(n-1)$--dimensional disc and
this bundle is trivial if $n$ is odd and non-orientable if $n$ is
even. In particular $\sp{2}(S^1)$ is the closed M\"{o}bius band.  In
fact one can identify $m^{-1}(1)$ with a closed simplex $\Delta^{n-1}$
so that the configuration space component $m^{-1}(1)\cap B(S^1,n)$
corresponds to the open part. This is a non-trivial construction that
can be found in Morton \cite{morton} and Morava \cite{jack}. Since
$B(S^1,n)$ fits in $\sp{n}(S^1)$ as the open disk bundle one gets that

\begin{prop}\label{cns1} $B(S^1,n)$ is a bundle over $S^1$ with fiber
the open unit disc $D^{n-1}$. This bundle is
trivial if and only if $n$ is odd.
\end{prop}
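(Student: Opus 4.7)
The plan is to derive both assertions directly from the bundle structure of $m\co \sp{n}(S^1)\lrar S^1$ already recalled from Morton, by simply restricting to the open subset $B(S^1,n)\subset \sp{n}(S^1)$. Since $m^{-1}(1)\cap B(S^1,n)$ is identified with the interior of the simplex $\Delta^{n-1}$, and since a restriction of a locally trivial bundle to a saturated open subset is again locally trivial, the restricted map $m|_{B(S^1,n)}\co B(S^1,n)\lrar S^1$ is an open $(n-1)$--disc bundle. This takes care of the first assertion.

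For the triviality question when $n$ is odd, Morton gives a global trivialization $\sp{n}(S^1)\cong S^1\times\Delta^{n-1}$ compatible with $m$; restricting to fibrewise interiors immediately yields $B(S^1,n)\cong S^1\times\operatorname{int}(\Delta^{n-1})\cong S^1\times D^{n-1}$, so the bundle is trivial.

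For $n$ even, I would argue by a non-orientability obstruction. Morton's theorem states that the closed disc bundle $\sp{n}(S^1)\to S^1$ is non-orientable, meaning its monodromy around the base circle acts by an orientation-reversing homeomorphism $\varphi\co\Delta^{n-1}\to\Delta^{n-1}$. Such a $\varphi$ necessarily preserves the topological boundary and the topological interior, and the orientation class is read off from the action on the top local (relative) homology of the fibre, which is the same whether computed for the closed disc or for its open interior. Hence $\varphi$ restricts to an orientation-reversing self-homeomorphism of $D^{n-1}$, so the open disc bundle $B(S^1,n)\to S^1$ is again non-orientable. Consequently its total space $B(S^1,n)$ is a non-orientable $n$--manifold, whereas $S^1\times D^{n-1}$ is manifestly orientable; therefore the two cannot be homeomorphic over $S^1$, and the bundle is non-trivial.

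The only mildly subtle step is the passage from non-orientability of the closed disc bundle to non-orientability of the open disc bundle, i.e.\ the verification that an orientation-reversing self-homeomorphism of $\Delta^{n-1}$ restricts to an orientation-reversing self-homeomorphism of its interior; this is routine by the invariance-of-domain characterization of orientation via top local homology. Everything else is a direct unpacking of the already cited results of Morton and Morava.
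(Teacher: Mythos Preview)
Your argument is correct and is essentially the paper's own proof: the paper also restricts Morton's closed disc bundle $m\co\sp{n}(S^1)\to S^1$ to the fibrewise interior and reads off triviality/non-triviality from Morton's result. One terminological quibble: $B(S^1,n)$ is not a \emph{saturated} open subset (a union of fibres) but rather the fibrewise interior, so the relevant fact is that local trivializations of a closed disc bundle automatically restrict to the interior since homeomorphisms of $\Delta^{n-1}$ preserve its topological interior --- which is exactly what you use later anyway.
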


Examples \ref{c2rn} and \ref{c2rp2} show that when $\dim M$ is odd $\neq 1$
or $M$ is not orientable, then $B(M,k)$ fails to be orientable.
The following explains why this needs to be the case.

\begin{lem}[Folklore]\label{folklore}
  Suppose $M$ is a manifold of dimension $d\geq 2$ and pick $n\geq 2$.  Then
  $B(M,n)$ is orientable if and only if $M$ is orientable of even dimension.
\end{lem}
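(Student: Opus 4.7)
The plan is to exploit the fact that $F(M,n)\to B(M,n)$ is a regular covering with deck group $\mathfrak{S}_n$ acting freely, and to reduce the orientability of $B(M,n)$ to two things: the orientability of $F(M,n)$, and the orientation behavior of the permutation action. Since $F(M,n)$ is an open subset of $M^n$, it is orientable iff $M$ is.

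First I would handle the case when $M$ is non-orientable. If $B(M,n)$ were orientable, one could pull back a global orientation along the covering map $F(M,n)\to B(M,n)$ to get a global orientation on $F(M,n)$, contradicting the non-orientability of $M^n$. So $M$ must be orientable for $B(M,n)$ to be orientable, which gives the ``only if'' direction in part.

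Next, assume $M$ is orientable, fix an orientation on $M$, and give $M^n$ the product orientation. Then $F(M,n)$ is orientable, and $B(M,n)$ is orientable iff every element of $\mathfrak{S}_n$ acts by an orientation-preserving diffeomorphism of $F(M,n)$ (if some $\sigma$ reverses orientation, pick a path in $F(M,n)$ from $x$ to $\sigma\cdot x$; its image in $B(M,n)$ is an orientation-reversing loop). It suffices to check this on a transposition $\tau=(i\,j)$. At a configuration $(x_1,\ldots,x_n)$, the differential of $\tau$ on $T_{x_1}M\oplus\cdots\oplus T_{x_n}M$ swaps the two $d$-dimensional blocks $T_{x_i}M$ and $T_{x_j}M$ and fixes the rest; choosing bases, this is the block permutation realized by the product of $d$ transpositions $(1,d{+}1)(2,d{+}2)\cdots(d,2d)$ on $2d$ basis vectors, hence has determinant $(-1)^d$.

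So transpositions act by $(-1)^d$ on orientations: they preserve orientation exactly when $d$ is even. Combining, $B(M,n)$ is orientable iff $M$ is orientable and $d$ is even, as claimed. The only subtle step is justifying that an orientation-reversing element of the deck group actually produces an orientation-reversing loop downstairs; I would make this precise by picking a local orientation at a basepoint $x$ of $F(M,n)$, pushing it forward to a local orientation at $p(x)\in B(M,n)$ via the local homeomorphism $p$, and comparing with the push-forward of the local orientation at $\sigma\cdot x$ along a path back to $p(x)$.
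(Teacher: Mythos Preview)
Your argument is correct, and it shares with the paper's proof the decisive computation that a transposition acts on the tangent space of $F(M,n)$ with determinant $(-1)^d$. There is one small gap worth flagging: the claim that $F(M,n)$ is non-orientable whenever $M$ is, justified only by ``open subset of $M^n$'', is not automatic---open subsets of non-orientable manifolds can certainly be orientable. What makes it work here is that the fat diagonal has codimension $d\geq 2$, so the inclusion $i\co F(M,n)\hookrightarrow M^n$ induces a surjection on $\pi_1$, and hence $w_1(F(M,n))=i^*w_1(M^n)\neq 0$ (equivalently, any orientation-reversing loop in $M^n$ can be homotoped off the fat diagonal). This is exactly how the paper handles that step.

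Beyond that, your route is more direct than the paper's. The paper splits into cases: for $d=2$ it invokes the complex structure on the symmetric product to conclude orientability, and for $d>2$ it first establishes the semidirect product decomposition $\pi_1(B(M,n))=\pi_1(M^n)\rtimes\mathfrak{S}_n$ via an embedded copy of $B(\bbr^d,n)$, then reduces to checking orientability of $B(\bbr^d,n)$. Your covering-space criterion---the quotient of a connected oriented manifold by a free properly discontinuous action is orientable iff every deck transformation preserves orientation---handles all $d\geq 2$ uniformly and bypasses both detours. The paper's approach does yield the extra structural information about $\pi_1(B(M,n))$ that is cited elsewhere in the paper, but for the lemma itself your argument is the cleaner one.
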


\begin{proof} 
  We consider the $\mathfrak{S}_n$--covering $\pi \co  F(M,n)\fract{\mathfrak{S}_n}{\lrar}
  B(M,n)$.  If $M$ is not orientable, then so is $M^n$. Now $i \co 
  F(M,n)\hookrightarrow M^n$ is the inclusion of the complement of 
  codimension at least two strata
  so that $\pi_1(F(M,n))\lrar \pi_1(M)^n$ is surjective and hence so is the
  map on $H_1$. The dual map in cohomology is an injection mod $2$ and hence
  $w_1(F(M,n))=i^*(w_1(M^n))\neq 0$ since $w_1(M^n)\neq 0$.  This implies that
  $F(M,n)$ is not orientable if $M$ isn't. It follows that the quotient
  $B(M,n)$ is not orientable as well.

Suppose then that $M$ is orientable.
If $d:= \dim M = 2$, then $M$ is a Riemann surface,
$B(M,n)$ is open in $\sp{n}(M)$ which is a complex manifold and hence is
orientable. Suppose now that $d:= \dim M > 2$ so that
$\pi_1F(M,n) = \pi_1(M^n)$ (since the fat diagonal has codimension $>2$).
Notice that we have an embedding
$\iota \co  B(\bbr^d,n)\hookrightarrow B(M,n)$ coming from the embedding
of an open disc $\bbr^d\hookrightarrow M$. Now $\pi_1(B(\bbr^d,n))=\mathfrak{S}_n$
when $d>2$, and $\iota$ induces a section of the short exact sequence
of fundamental groups for the $\mathfrak{S}_n$--covering $\pi$ so we have
a semi-direct product decomposition
$$\pi_1(B(M,n)) = \pi_1(M^n) \ltimes\mathfrak{S}_n\ , \ \ \ d > 2\ .$$
Let's argue then that $B(\bbr^d,n)$ is orientable if and only if $d$ is
even. Denote by $\tau_x$ the tangent space at $x\in \bbr^d$ and write $\pi \co 
F(\bbr^d,n)\lrar B(\bbr^d,n)$ the quotient map.  A transposition
$\sigma\in\mathfrak{S}_n$ acts on the tangent space to $B(\bbr^d,n)$ at some
chosen basepoint say $[x_1,\ldots, x_n]$ which is identified with the tangent
space $\tau_{x_1}\times\cdots\times\tau_{x_n}$ at say $(x_1,\ldots,
x_n)\in\pi^{-1}([x_1,\ldots, x_n])\subset F(\bbr^d,n)\subset (\bbr^d)^n$.  The
action of $\sigma = (ij)$ interchanges both copies $\tau_{x_i}M$ and
$\tau_{x_j}M\cong\bbr^d$ and thus has determinant $(-1)^d$. Orientation is
preserved only when $d$ is even and the claim follows (for the relation
between orientation and fundamental group see Novikov \cite[Chapter 4]{novikov}).
\end{proof}

Note that the lemma above is no longer true in the one-dimensional
case according to \fullref{cns1}.


\section{Truncated symmetric products and duality}\label{tp}

The heroes here are the truncated symmetric product functors $TP^n$
which were first put to good use by B{\"o}digheimer, Cohen and Milgram
in \cite{bcm} and Milgram and L{\"o}ffler in \cite{lm}. For $n\geq 2$,
define the identification space
$$TP^n(X) := \sp{n}(X)/_{\hbox{\footnotesize$\sim$}}\ ,\ \
[x,x,y_1\ldots, y_{n-2}]\sim [*,*,y_1,\cdots, y_{n-2}]$$
where as always $*\in X$ is the basepoint.
Clearly $TP^1X = X$ and
we set $TP^0(X) = *$. Note that by adjunction of basepoint
$[x_1,\ldots, x_n]\mapsto [*,x_1,\ldots, x_n]$, we obtain topological
embeddings $\sp{n}(X)\lrar\sp{n+1}(X)$ and $TP^n(X)\lrar TP^{n+1}(X)$ of
which limits are $\spy (X)$ and
$TP^{\infty}(X)$ respectively.
We identify $\sp{n-1}(X)$ and $TP^{n-1}(X)$ with their images in $\sp{n}(X)$
and $TP^{n}(X)$ under these embeddings and we write
\begin{equation}\label{tpnbar}
\overline{TP}^n(X):= TP^n(X)/TP^{n-1}(X)
\end{equation}
for the \textit{reduced} truncated product.
These are based spaces by construction. We will set
$\overline{TP}^0(X):= S^0$. The following two properties are crucial.

\begin{thm}\label{property}\
\begin{enumerate}
\item {\rm(Dold and Thom \cite{dt})}\qua
$\pi_i(TP^{\infty}(X))\cong\tilde H_i(X;\bbf_2)$
\item {\rm(Milgram and L{\"o}ffler \cite{lm})}\qua There is a
splitting$$H_*(TP^n(X);\bbf_2)\cong H_*(TP^{n-1}(X);\bbf_2)\oplus
\tilde H_*(\overline{TP}^{n}X;\bbf_2 ).$$
\end{enumerate}
\end{thm}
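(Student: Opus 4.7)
The plan for part (1) is to repeat the Dold--Thom argument with $\mathbb{F}_2$ coefficients in place of $\mathbb{Z}$. Observe that $SP^\infty(X)$ is the free topological abelian monoid on the pointed space $X$, and the relation $[x,x,y_1,\ldots] \sim [*,*,y_1,\ldots]$ imposed to form $TP^\infty(X)$ is precisely the relation $2x=0$. Hence $TP^\infty(X) = SP^\infty(X)/(2\equiv 0)$ is the free topological $\mathbb{F}_2$-module on $X$. I would then show that the functor $X \mapsto \pi_*(TP^\infty X)$ satisfies the Eilenberg--Steenrod axioms for reduced homology with $\mathbb{F}_2$-coefficients: homotopy invariance is clear, the coefficient computation $\pi_*(TP^\infty S^0) = \mathbb{F}_2$ concentrated in degree $0$ is a direct check, and the long exact sequence for a cofibration $A \hookrightarrow X$ follows from establishing that $TP^\infty(A) \to TP^\infty(X) \to TP^\infty(X/A)$ is a quasi-fibration. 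That quasi-fibration step is verbatim the original Dold--Thom filtration-and-lifting argument, with the simplification that the $\mathbb{F}_2$-module structure allows one to write local sections more easily than in the $SP^\infty$ case.

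For part (2) the plan is to work with the cofibration sequence
$$TP^{n-1}(X) \hookrightarrow TP^n(X) \longrightarrow \overline{TP}^n(X),$$
which gives a long exact sequence in $\mathbb{F}_2$-homology, and to show that the connecting map vanishes. A clean way to do this is to produce a stable retraction of the inclusion: build a map of spectra $\Sigma^\infty TP^n(X)_+ \to \Sigma^\infty TP^{n-1}(X)_+$ left inverse, after composition with $\Sigma^\infty$ of the inclusion, to the identity. I would construct this by combining the well-known Steenrod--Nakaoka mod-$2$ stable splitting of symmetric products $SP^n(X) \simeq_{(2)} \bigvee_{k \leq n}\overline{SP}^k(X)$ with the canonical surjection $SP^n(X) \twoheadrightarrow TP^n(X)$, and checking that the summands of weight $\geq n$ survive to identify $\overline{TP}^n(X)$ stably as a wedge summand of $TP^n(X)$.

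The main obstacle in both parts is controlling the quotient relation under the natural filtration. In part (1) it is verifying the quasi-fibration property at the "bad" points where a coordinate doubles up, which is where the mod-$2$ identification kicks in and where the standard Dold--Thom lifting templates require modification. In part (2) the harder task is to check that passing from $SP^n$ to $TP^n$ is compatible with the Nakaoka splitting: one must identify which Nakaoka basis elements get killed by the relation $2x \sim 0$ and see that the surviving ones realize the desired wedge summand $\overline{TP}^n(X)$. Once this bookkeeping is in place the splitting is formal from the cofibration sequence and induction on $n$.
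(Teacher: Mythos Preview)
Your outline for part (1) is essentially the argument of Dold and Thom, adapted to the $\mathbb{F}_2$--module setting; that is exactly what the paper cites, so this part is fine.

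For part (2) your route diverges from the paper's and carries a real gap. The paper does not descend from the Steenrod--Nakaoka splitting of $SP^n$; it builds a retract $H_*(TP^n(X);\mathbb{F}_2)\to H_*(TP^{n-1}(X);\mathbb{F}_2)$ directly by a \emph{transfer} argument on the truncated products themselves. Concretely, the ``forget one point'' transfer $[x_1,\ldots,x_n]\mapsto \sum_i [x_1,\ldots,\hat x_i,\ldots,x_n]$ is well-defined on $TP^n$ with $\mathbb{F}_2$--coefficients precisely because the ambiguity introduced by the relation $[x,x,\ldots]\sim[*,*,\ldots]$ contributes in pairs, hence vanishes mod $2$. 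This transfer, combined with the long exact sequence of the pair, gives the splitting immediately. The paper also notes an alternative route via the homotopy equivalence $TP^\infty(TP^n X)\simeq TP^\infty(\overline{TP}^n X)\times TP^\infty(TP^{n-1}X)$, which together with part (1) yields the homology splitting.

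Your proposed approach, by contrast, asks the Nakaoka retraction $SP^n\to SP^{n-1}$ to descend along the surjections $SP^k\twoheadrightarrow TP^k$. That is not automatic: you would need the Nakaoka retraction to respect the truncation relation, which is a non-obvious compatibility and is essentially equivalent to what you are trying to prove. Calling this ``bookkeeping'' undersells it; in practice one ends up reconstructing the transfer above anyway. The direct transfer on $TP^n$ is both shorter and avoids this circularity.
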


The splitting in (2) is obtained from the long exact sequence for the
pair $(TP^n(X),$\break$TP^{n-1}(X))$ and the existence of a retract
$H_*(TP^n(X);\bbf_2)\lrar H_*(TP^{n-1}(X);\bbf_2 )$ constructed using
a transfer argument.  In fact this splitting can be viewed as a
consequence of the following homotopy equivalence discussed in
\cite{lm} and Zanos \cite{zanos}.

\begin{lem}
$TP^{\infty}(TP^n(X))\simeq TP^{\infty}(\overline{TP}^n(X))
\times TP^{\infty}(TP^{n-1}(X))$.
\end{lem}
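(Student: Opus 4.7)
The plan is to exhibit $TP^{\infty}$ as a ``mod-$2$ Dold--Thom'' functor and then exploit the resulting $\bbf_2$--vector space structure on homotopy groups. First, observe that $TP^{\infty}(Y)$ is the quotient of the topological abelian monoid $\sp{\infty}(Y)$ by the relation $2y\sim 0$ (the limiting form of $[x,x,\ldots]\sim[*,*,\ldots]$), so it is naturally a topological $\bbf_2$--vector space. Combined with part~(1) of \fullref{property}, this forces the homotopy type $TP^{\infty}(Y)\simeq\prod_{m\geq 1}K(\tilde H_m(Y;\bbf_2),m)$; in particular, $TP^{\infty}(Y)$ is a generalized $\bbf_2$--Eilenberg--MacLane space.

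Next, I would check that the Dold--Thom quasifibration argument carries over to this truncated setting: for any based CW cofibration $A\hookrightarrow Y$, the sequence
$$TP^{\infty}(A)\lrar TP^{\infty}(Y)\lrar TP^{\infty}(Y/A)$$
is a quasifibration, since the identification $2y\sim 0$ respects the symmetric-product filtration used in \cite{dt}. Applied to the cofibration $TP^{n-1}(X)\hookrightarrow TP^n(X)\twoheadrightarrow\overline{TP}^n(X)$, this yields a quasifibration
$$TP^{\infty}(TP^{n-1}X)\lrar TP^{\infty}(TP^n X)\lrar TP^{\infty}(\overline{TP}^n X).$$
Because the associated long exact sequence consists entirely of $\bbf_2$--vector spaces, it breaks into algebraically split short exact sequences. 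Since $TP^{\infty}(TP^n X)$ is a $\bbf_2$--GEM, any choice of algebraic splittings $\{s_m\}$ is realized by a genuine section $s\co TP^{\infty}(\overline{TP}^n X)\to TP^{\infty}(TP^n X)$; combining $s$ with the inclusion $j\co TP^{\infty}(TP^{n-1}X)\hookrightarrow TP^{\infty}(TP^n X)$ via the topological group sum, one obtains
$$\Phi(u,v):=s(u)+j(v)\co TP^{\infty}(\overline{TP}^n X)\times TP^{\infty}(TP^{n-1}X)\lrar TP^{\infty}(TP^n X),$$
which is an isomorphism on $\pi_*$ by construction, and hence a homotopy equivalence on CW types.

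The main obstacle is the quasifibration claim: that the cofibration $TP^{n-1}X\hookrightarrow TP^n X$ remains a quasifibration after applying $TP^{\infty}$. Establishing it requires adapting Dold and Thom's filtration-and-quasifibration machinery to track the extra identification $2y\sim 0$ at every symmetric stratum, which is the technical core already present in \cite{lm,zanos} where the lemma originates.
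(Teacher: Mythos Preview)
The paper does not actually prove this lemma; it cites \cite{lm} and \cite{zanos} and only indicates, just before the statement, that the underlying $\bbf_2$--homology splitting (part~(2) of \fullref{property}) comes from a transfer retract $H_*(TP^n X;\bbf_2)\to H_*(TP^{n-1}X;\bbf_2)$. Your overall strategy---recognize $TP^{\infty}$ as a mod--$2$ Dold--Thom functor landing in $\bbf_2$--GEMs, produce a quasifibration from the cofibration $TP^{n-1}X\hookrightarrow TP^n X$, and split it---is sound and is essentially how one would reconstruct the argument from those references.

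There is, however, a genuine gap in your reasoning. You assert that ``because the associated long exact sequence consists entirely of $\bbf_2$--vector spaces, it breaks into algebraically split short exact sequences.'' Working over $\bbf_2$ guarantees that every \emph{short} exact sequence splits; it does not force the connecting maps in a \emph{long} exact sequence to vanish. For a counterexample, apply $TP^{\infty}$ to the cofibration $S^1\hookrightarrow D^2\to S^2$: one obtains $K(\bbf_2,1)\to *\to K(\bbf_2,2)$, whose homotopy long exact sequence has a nonzero boundary $\pi_2\to\pi_1$. What is really needed here is that $\tilde H_*(TP^{n-1}X;\bbf_2)\to\tilde H_*(TP^n X;\bbf_2)$ is injective---and that is exactly the transfer input the paper highlights. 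Once you feed that in (or, more directly, lift the transfer to a retraction $TP^{\infty}(TP^n X)\to TP^{\infty}(TP^{n-1}X)$ of topological $\bbf_2$--modules), the connecting maps vanish, the resulting short exact sequences of $\bbf_2$--vector spaces split, and your GEM/section construction finishes the proof as you describe.
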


Further interesting splittings of the sort for a variety of other
functors are investigated in \cite{zanos}. The prototypical and basic
example of course is Steenrod's original splitting of the homology of
symmetric products (which holds with integral coefficients).

\begin{thm}[Steenrod, Nakaoka]\label{steenrod}
The induced basepoint adjunction map on
homology $H_*(\sp{n-1}(X);\bbz )\lrar H_*(\sp{n}(X);\bbz )$
is a split monomorphism.
\end{thm}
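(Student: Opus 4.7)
Plan: The goal is to produce an integral-homology retraction of the inclusion $i_*\co H_*(\sp{n-1}(X);\bbz)\to H_*(\sp{n}(X);\bbz)$. Equivalently, via the cofiber sequence $\sp{n-1}(X)\hookrightarrow\sp{n}(X)\to\bsp{n}(X)$, it suffices to split the connecting map in the associated long exact sequence, exhibiting $H_*(\sp{n}(X);\bbz)\cong H_*(\sp{n-1}(X);\bbz)\oplus\tilde H_*(\bsp{n}(X);\bbz)$.

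The main input is the topological abelian monoid structure on $\spy(X)$ coming from concatenation of unordered tuples, $\spy(X)\times\spy(X)\to\spy(X)$, $(A,B)\mapsto A+B$. Under this operation the basepoint-adjunction inclusion $\sp{n-1}(X)\hookrightarrow\sp{n}(X)$ is precisely Pontryagin multiplication by the basepoint class $[*]\in H_0(\spy(X);\bbz)$, and the length filtration $\sp{0}(X)\subset\sp{1}(X)\subset\cdots\subset\spy(X)$ is multiplicative with associated graded pieces the reduced symmetric products $\bsp{k}(X)$. For connected $X$, the Dold--Thom theorem identifies $\spy(X)$ with a weak product of Eilenberg--MacLane spaces $\prod_i K(\tilde H_i(X;\bbz),i)$, so $H_*(\spy(X);\bbz)$ is a bicommutative graded Hopf algebra whose underlying graded abelian group is computed by the K\"unneth theorem from the homologies of individual $K(\pi,n)$'s.

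To descend to finite $n$, I would construct a retraction $H_*(\sp{n}(X);\bbz)\to H_*(\sp{n-1}(X);\bbz)$ using the coproduct on $H_*(\spy(X);\bbz)$ dual to the Pontryagin product: the partial coproduct morally ``pulls off'' a single point from a length--$n$ configuration, and composing with the counit of the $X$--factor yields the retraction. Compatibility with the length filtration guarantees it lands in $H_*(\sp{n-1}(X);\bbz)$, and the identity $r_*\circ i_*=\mathrm{id}$ on $H_*(\sp{n-1}(X);\bbz)$ reduces to unitality of the Hopf algebra. This is the integral analogue of the transfer-style construction that yielded the mod--$2$ splitting of \fullref{property}(2) for truncated products; the argument adapts because multiplication by $[*]$ is a genuine two-sided factorization of the adjunction map.

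The main obstacle is that no space-level retraction $\sp{n}(X)\to\sp{n-1}(X)$ exists in general, so the splitting is purely a homological statement, and the rational shortcut (where $H_*(\spy X;\bbq)$ is a free graded-commutative algebra on $\tilde H_*(X;\bbq)$) does not apply directly over $\bbz$ because of torsion in $\tilde H_*(X;\bbz)$. Nakaoka circumvents this by inducting on a cell decomposition of $X$: the splitting is stable under cofiber sequences of CW pairs, so one reduces to wedges of spheres and Moore spaces where the Hopf algebra structure splits explicitly. I would follow this inductive template, with the Pontryagin coproduct providing the retraction on the base cases and the five-lemma propagating it through the induction.
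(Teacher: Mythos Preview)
The paper does not supply a proof of this theorem: it is stated as a classical result attributed to Steenrod and Nakaoka, with only the parenthetical remark that the analogous $\bbf_2$--splitting for truncated products (\fullref{property}(2)) is ``constructed using a transfer argument.''  So there is no paper-proof to compare against; your task was really to reconstruct the classical argument.

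Your strategic outline is sound --- produce a homological retraction of the basepoint inclusion, exploit the monoid structure on $\spy(X)$, and if necessary reduce via Dold's theorem to simple building blocks.  The difficulty is your proposed mechanism for the retraction.  You invoke ``the coproduct on $H_*(\spy(X);\bbz)$ dual to the Pontryagin product'' and say it ``pulls off a single point,'' but the only natural coproduct on $H_*(\spy X)$ is the one induced by the \emph{diagonal} $\spy X\to\spy X\times\spy X$, and that map sends $\sp{n}X$ into the diagonal of $\sp{n}X\times\sp{n}X$, not into $\bigcup_{i+j=n}\sp{i}X\times\sp{j}X$.  It therefore does not respect the length filtration in the way you need, and no amount of composing with counits recovers a map to $H_*(\sp{n-1}X)$.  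There is no space-level comultiplication on $\spy X$ that splits configurations by length.

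What actually works is the \emph{transfer}: the map
\[
\phi\co \sp{n}X\lrar \sp{n}(\sp{n-1}X)\hookrightarrow\spy(\sp{n-1}X),\quad
[x_1,\ldots,x_n]\longmapsto \sum_{i=1}^n[x_1,\ldots,\hat x_i,\ldots,x_n],
\]
which on $\pi_*\spy(\sp{n-1}X)=H_*(\sp{n-1}X)$ (Dold--Thom) yields a homomorphism $H_*(\sp{n}X)\to H_*(\sp{n-1}X)$.  The composite $\phi\circ i$ on $\sp{n-1}X$ equals the natural inclusion into $\spy(\sp{n-1}X)$ plus a correction term lying in the image of $\sp{n-2}X$; an induction on $n$ then straightens this into a genuine retraction.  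This is the ``transfer argument'' the paper alludes to, and it is what Steenrod's original (chain-level) construction amounts to.  Your phrase ``pulls off a single point'' describes $\phi$ perfectly, but $\phi$ is not a coproduct in the Hopf-algebra sense --- it is a transfer, and the distinction matters because the diagonal coproduct you seem to have in mind does not do the job.
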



\subsection{Duality and homological dimension}
The point of view we adopt here is that $B(M,n) = TP^n(M)-TP^{n-2}(M)$
as spaces.  A version of Poincar\'e--Lefshetz duality
(\fullref{duality}) can then be used to relate the cohomology of
$B(M,k)$ to the homology of reduced truncated products. This idea is
of course not so new (see B{\"o}digheimer, Cohen and Taylor \cite{bct}
or M{\`u}i \cite{mui}).

If $U\subset X$ is a closed cofibrant subset of $X$, define in
$\sp{n}(X)$ the ``ideal'':
\begin{equation}\label{ideal}
\underline{U} := \{[x_1,\ldots, x_n]\in\sp{n}(X),
x_i\in U\ \hbox{for some $i$}\}
\end{equation}
For example and if $*\in X$ is the basepoint, then $\underline{*} =
\sp{n-1}(X)\subset\sp{n}(X)$.  Let $S$ be the ``singular set'' in
$\sp{n}(X)$ consisting of unordered tuples with at least two repeated
entries. This is a closed subspace.

\begin{lem}\label{quotient} With $U\neq\emptyset$,
$\sp{n}(X)/({\underline{U}\cup S})= \overline{TP}^{n}(X/U)$.
\end{lem}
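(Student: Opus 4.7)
The plan is to exhibit an explicit continuous surjection $f\co \sp{n}(X)\longrightarrow\overline{TP}^n(X/U)$ that collapses $\underline{U}\cup S$ to the basepoint and is bijective on the complement, then to verify that $f$ is a quotient map; the universal property will deliver the homeomorphism.

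Write $q\co X\longrightarrow X/U$ for the quotient collapsing $U$ to the basepoint $*$, and let $f$ be the composition
$$\sp{n}(X)\stackrel{\sp{n}(q)}{\longrightarrow}\sp{n}(X/U)\longrightarrow TP^n(X/U)\longrightarrow\overline{TP}^n(X/U),$$
where the middle arrow is the defining quotient of $TP^n$ and the last is the quotient by $TP^{n-1}(X/U)$. That $f$ annihilates $\underline{U}\cup S$ is a direct check: a tuple in $\underline{U}$ maps under $\sp{n}(q)$ to a tuple with a basepoint entry, hence into $\sp{n-1}(X/U)\subset TP^{n-1}(X/U)$; a tuple $[y,y,z_1,\ldots,z_{n-2}]\in S$ maps in $TP^n(X/U)$ to $[*,*,q(z_1),\ldots,q(z_{n-2})]$ by the defining relation, which lies in $\sp{n-2}(X/U)\subset TP^{n-1}(X/U)$. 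Both are killed in the last quotient, producing an induced continuous map $\bar f\co \sp{n}(X)/(\underline{U}\cup S)\longrightarrow\overline{TP}^n(X/U)$.

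Next I would check that $\bar f$ is a bijection. The key set-theoretic observation is that the preimage of $TP^{n-1}(X/U)$ in $\sp{n}(X/U)$ is exactly $\sp{n-1}(X/U)\cup S'$, where $S'$ denotes the singular set of $\sp{n}(X/U)$: the generating relation $[y,y,\cdots]\sim[*,*,\cdots]$ can only be applied at a repeated pair or a doubly-basepoint pair, so any tuple with no repetition and no basepoint entry is alone in its equivalence class. Pulling this back through $\sp{n}(q)$, and using that a repetition in $X/U$ comes either from a repetition in $X$ (contributing to $S$) or from two coordinates in $U$ (contributing to $\underline{U}$), the preimage in $\sp{n}(X)$ is exactly $\underline{U}\cup S$. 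Therefore $f$ is injective on the complement $B(X\setminus U,n)$, and since $q$ restricts to a homeomorphism $X\setminus U\cong X/U\setminus\{*\}$, its image is precisely $\overline{TP}^n(X/U)\setminus\{*\}$. Combined with the collapsing, this shows $\bar f$ is a bijection.

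Finally, $\bar f$ is a homeomorphism because $f$ is a composition of quotient maps: the last two factors are quotients by construction, and $\sp{n}(q)$ is a quotient map whenever $q$ is, provided one works in the convenient category (compactly generated or simplicial) and $U\hookrightarrow X$ is a closed cofibration. This last property is the only delicate step, but it is standard under the paper's running hypotheses on $U$; the rest is bookkeeping with the defining relations of $\sp{n}$ and $TP^n$.
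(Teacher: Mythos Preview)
Your proposal is correct and follows essentially the same route as the paper's proof: pass from $\sp{n}(X)$ to $\sp{n}(X/U)$ via $\sp{n}(q)$, then quotient out the singular set and the basepoint ideal to reach $\overline{TP}^n(X/U)$. The paper's argument is a terse two-step version of yours---it first asserts by inspection that $\sp{n}(X)/(\underline{U}\cup S)=\sp{n}(X/U)/(\underline{*}\cup S)$, then mods out $S$ and $\underline{*}$ in turn---whereas you spell out the map, the bijectivity check, and the point-set issue about $\sp{n}(q)$ being a quotient map; these are exactly the details the paper is suppressing.
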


\begin{proof} Denote by $*$ the basepoint of $X/U$ which is the image of $U$
  under the quotient $X\lrar X/U$. Then by inspection
$$\sp{n}(X)/(\underline{U}\cup S) = \sp{n}(X/U)/(\underline{*}\cup S)\ .$$
Moding out $\sp{n}(X/U)$ by $S$ we obtain
  $TP^n(X/U)/TP^{n-2}(X/U)$.  Moding out further
  by $\underline{*}$ we obtain the desired quotient.
\end{proof}

The next lemma is the fundamental observation which states that for
$M$ a compact manifold with boundary and $U\hookrightarrow M$ a closed
cofibration, $B(M-U,n) \cong \sp{n}(M) - \underline{U\cup\partial
M}\cup S$ is Poincar\'e--Lefshetz dual to the quotient
$\sp{n}(M)/(\underline{U\cup\partial M}\cup S)$.  More precisely, set
\begin{equation}\label{m+}
\overline{M} = M/(U\cup\partial M)
\end{equation}
with the understanding that $\bar M=M$ if $U\cup\partial M=\emptyset,
\{\hbox{point}\}$.
The following elaborates on \cite[Theorem 3.2]{bcm}.

\begin{lem}\label{duality}
If $M$ is a compact manifold of dimension $d\geq 1$, $U\subset M$ a closed
subset with $M-U$ connected, $U\cap\partial
M=\emptyset$ and $\overline{M}$ as in \eqref{m+}, then
$$ H^i(B(M-U,k);\pm\bbz )\cong \begin{cases}
H_{kd-i}(TP^k(\overline{M}), TP^{k-1}(\overline{M});\bbz ),&\
\hbox{if}\ U\cup\partial M\neq\emptyset,\\ H_{kd-i}(TP^k(M),
TP^{k-2}(M);\bbz ),&\ \hbox{if}\ U\cup\partial M=\emptyset.
\end{cases}
$$
The isomorphism holds with coefficients $\bbf_2$.  When $M$ is even
dimensional and orientable, we can replace $\pm\bbz$ by the trivial
module $\bbz$.  \end{lem}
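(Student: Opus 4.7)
The plan rests on three ingredients: $B(M-U,k)$ is an open $kd$--manifold obtained as the complement of a closed subset in the compact Hausdorff space $\sp{k}(M)$; Poincar\'e--Lefschetz duality with twisted coefficients converts its cohomology into Borel--Moore homology of a relative pair; and \fullref{quotient} identifies the resulting quotient as a reduced truncated product.

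Set $A:=\underline{U\cup\partial M}\cup S$ when $U\cup\partial M\ne\emptyset$, and $A:=S$ otherwise, where $S\subset\sp{k}(M)$ is the singular set. In both cases $B(M-U,k)=\sp{k}(M)-A$ is an open $kd$--dimensional submanifold. Applying Poincar\'e--Lefschetz duality to this open manifold together with its orientation sheaf (which is what $\pm\bbz$ denotes, cf.\ \fullref{twisted}) gives
$$H^i(B(M-U,k);\pm\bbz)\cong H^{BM}_{kd-i}(B(M-U,k);\bbz).$$
When $d$ is even and $M$ is orientable, \fullref{folklore} guarantees $B(M-U,k)$ is orientable, so the right-hand side is untwisted; over $\bbf_2$ orientations play no role. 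Since $\sp{k}(M)$ is compact Hausdorff and $A$ is closed and cofibrant (inherited from the natural CW--structure on $\sp{k}(M)$), Borel--Moore homology of the complement is computed by the relative, hence reduced, homology of the pair:
$$H^{BM}_{kd-i}(B(M-U,k);\bbz)\cong H_{kd-i}(\sp{k}(M),A;\bbz)\cong\tilde H_{kd-i}(\sp{k}(M)/A;\bbz).$$

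It remains to identify the quotient $\sp{k}(M)/A$. When $U\cup\partial M\ne\emptyset$, \fullref{quotient} applied with ``$U$'' replaced by $U\cup\partial M$ yields $\sp{k}(M)/A=\overline{TP}^{k}(\overline{M})=TP^{k}(\overline{M})/TP^{k-1}(\overline{M})$, giving the first branch. When $U\cup\partial M=\emptyset$, $A=S$; the quotient map $\sp{k}(M)\to TP^{k}(M)$ sends $S$ onto $TP^{k-2}(M)$ and is a bijection off $S$, and since the identifications defining $TP^{k}(M)$ are entirely confined to $S$ one has $\sp{k}(M)/S\cong TP^{k}(M)/TP^{k-2}(M)$, giving the second branch.

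The main obstacle is the coefficient bookkeeping in the second step: one must verify that the intrinsic twist $\pm\bbz$ on the cohomology of $B(M-U,k)$ coincides with its orientation sheaf as a $kd$--manifold, rather than with some other local system inherited from $M$. This reduces to computing the local orientation character of the $\mathfrak{S}_{k}$--action on $F(M-U,k)$, where transpositions act on tangent spaces by $(-1)^{d}$ as in the proof of \fullref{folklore}, and pushing this to the quotient. A secondary technicality is to confirm that $A$ is a good cofibrant subspace of $\sp{k}(M)$, which is needed both to apply Poincar\'e--Lefschetz duality in this singular ambient space and to justify the identification $H_{*}(\sp{k}(M),A)\cong\tilde H_{*}(\sp{k}(M)/A)$.
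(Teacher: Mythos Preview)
Your proposal is correct and follows essentially the same route as the paper. The only cosmetic difference is packaging: the paper replaces $A$ by a tubular neighborhood $N$, applies classical Poincar\'e--Lefschetz duality to the compact manifold-with-boundary $\sp{k}(M)-N$, and then excises back to $(\sp{k}(M),A)$, whereas you phrase the same step via Borel--Moore homology of the open complement; the identification of the quotient via \fullref{quotient} and the orientability discussion via \fullref{folklore} are identical.
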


\begin{proof}
  Suppose $X$ is a compact oriented $d$--manifold with boundary
  $\partial X$.  Then Poincar\'e--Lefshetz duality gives an isomorphism
  $H^{d-q}(X;\bbz )\cong H_q(X,\partial X;\bbz )$.  Apply this to the
  following situation: $X$ is a finite $d$--dimensional CW--complex,
  $V\subset X$ is a closed subset of $X$, and $N$ is a
  tubular neighborhood of $V$ which deformation retracts onto it;
$$V\subset N\subset X$$
$\bar N$ its closure and
$\partial\bar N = \partial (X-N)=\bar N-N$.
Assume that $X-N$ is an orientable $d$--dimensional manifold
with boundary $\partial\bar N$. Then
 we have a series of isomorphisms:
\begin{equation}\label{iso}
H^{d-q}(X-V;\bbz )\cong
H^{d-q}(X-N;\bbz )\cong H_q(X-N,\partial\bar N;\bbz )
\cong H_q(X,V;\bbz )
\end{equation}

Let's now apply \eqref{iso} to the case when $X =
\sp{k}(\overline{M})$ with $M$ as in the lemma and with $V$ the
closed subspace consisting of configurations $[x_1,\ldots, x_k]$ such
that 

(i)\qua $x_i=x_j$ for some $i\neq j$,\qua or 

(ii)\qua for some $i$, $x_i=*$
the point at which $U\cup\partial M$ is collapsed out.  

As discussed in \fullref{quotient},
$\sp{k}(\overline{M})/\underline{*} =
\sp{k}(M)/(\underline{U\cup\partial M})$ so that
$\sp{k}(\overline{M})/V$ $ = \sp{k}(M)/(\underline{U\cup\partial M}\cup
S)$ with $S$ again being the image of the fat diagonal in $\sp{k}(M)$.
Then, according to \fullref{quotient} and to its proof we see that
$$\sp{k}(\overline{M})/V =
\begin{cases} 
TP^k(\overline{M})/TP^{k-1}(\overline{M}),& \hbox
{if $\partial M\neq\emptyset$ or $U\neq\emptyset$}, \\ 
TP^k(M)/TP^{k-2}(M),& \hbox{if $M$ closed and $U=\emptyset$}.
\end{cases}
$$ 
Now $B(M-U,k)\cong \sp{k}(M)- \underline{U\cup\partial M}\cup S
=\sp{k}(\overline{M})-V$ is connected (since $M-U$ is), it is $kd$
dimensional and is orientable if $M$ is even dimensional
orientable (\fullref{folklore}). 
Applying \eqref{iso} yields the
result in the orientable case. When $B(M-U,k)$ is non orientable,
Poincar\'e--Lefshetz duality holds with twisted coefficients.
\end{proof}

A version of this lemma has been greatly exploited in \cite{bcm,ks} to
determine the homology of braid spaces and analogs. The following is
immediate.

\begin{cor}\label{conR} With $M$, $U\subset M$ as in \fullref{duality},
let
$$R_k = \begin{cases}
\conn(TP^k(\overline{M})/TP^{k-1}(\overline{M})), &\hbox{if}\ U\cup\partial M\neq\emptyset,\\
\conn(TP^k(M)/TP^{k-2}(M)), &\hbox{if}\ U\cup\partial M=\emptyset.
\end{cases}
$$
Then $\cohdim_{\pm\bbz}(B(M-U,k)) = dk-R_k-1$.
\end{cor}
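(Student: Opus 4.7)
The plan is to read off the corollary directly from the duality isomorphism of \fullref{duality} by translating ``homological connectivity of the quotient'' into ``vanishing range of cohomology of the braid space''.

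First, I would invoke \fullref{duality} to identify $H^i(B(M-U,k);\pm\bbz)$ with the relative homology $H_{kd-i}(TP^k(\overline{M}),TP^{k-1}(\overline{M});\bbz)$ in the case $U\cup\partial M\neq\emptyset$, and with $H_{kd-i}(TP^k(M),TP^{k-2}(M);\bbz)$ in the case $U\cup\partial M=\emptyset$. Since in either case the subspace is a based CW subcomplex of the ambient truncated product and the inclusion is a cofibration, this relative homology agrees with the reduced homology of the cofiber, namely $\tilde H_{kd-i}(Q;\bbz)$, where $Q$ is the quotient whose connectivity is, by definition, $R_k$.

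Next I would translate the connectivity $R_k$ of $Q$ into a vanishing range for its reduced homology. The quotient $Q$ is a based simply connected CW complex (the truncated products of a connected based complex are connected, the inclusion of the bottom piece preserves the basepoint, and for $k\geq 2$ the cofiber has the homotopy type of a simply connected CW complex — one checks this by noting that the reduced truncated product is a subquotient of a smash product of copies of $\overline{M}$, cf.\ the splittings in \fullref{property}). By the Hurewicz theorem applied to $Q$, the connectivity assumption yields $\tilde H_j(Q;\bbz)=0$ for $j\leq R_k$ and $\tilde H_{R_k+1}(Q;\bbz)\neq 0$ (when $R_k$ is finite).

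Combining the two steps, $H^i(B(M-U,k);\pm\bbz)=0$ if and only if $kd-i\leq R_k$, i.e.\ $i\geq kd-R_k$, while $H^{kd-R_k-1}(B(M-U,k);\pm\bbz)\cong\tilde H_{R_k+1}(Q;\bbz)\neq 0$. Hence $\cohdim_{\pm\bbz}(B(M-U,k))=dk-R_k-1$, as required; the degenerate case $R_k=\infty$ (i.e., $Q$ contractible) gives identically vanishing cohomology, consistent with the convention $dk-\infty-1=-\infty$. The only subtle step is the simple-connectedness of $Q$, which is needed to apply Hurewicz and get an equality rather than merely an inequality; this is the piece I expect to require a moment of care but which follows from standard properties of symmetric and truncated products already recalled in \fullref{property}.
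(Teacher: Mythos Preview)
Your approach is exactly the paper's: the corollary is declared ``immediate'' from \fullref{duality}, and your translation of the relative group into the reduced homology of the cofiber $Q$ is the intended reading.

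The gap is in your justification of the \emph{equality}. You correctly isolate the issue---to upgrade $\leq$ to $=$ one needs $\tilde H_{R_k+1}(Q;\bbz)\neq 0$, hence Hurewicz, hence $Q$ simply connected---but your argument for simple connectivity does not hold up. The reference to \fullref{property} gives only a mod--$2$ homology splitting and says nothing about $\pi_1$, and ``subquotient of a smash product'' is insufficient since quotients can create fundamental group. In the case $U\cup\partial M\neq\emptyset$ the correct justification is the Van~Kampen remark following \fullref{nakak}, which does show $\overline{TP}^k(\overline{M})$ is simply connected for $k\geq 2$. In the closed case, however, your claim is simply false: for $M=S^1$, $k=2$ one has $Q=TP^2(S^1)/TP^0(S^1)=TP^2(S^1)\cong\bbr P^2$ by \fullref{circle}, with $\pi_1=\bbz/2$. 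The equality of the corollary happens to survive in that example because $H_1(\bbr P^2)=\bbz/2\neq 0$, but not by your reasoning. Note finally that the paper only ever \emph{uses} the inequality $\cohdim_{\pm\bbz}\leq dk-R_k-1$ (see the derivation of \fullref{main3} from \fullref{R}), and that direction needs only the vanishing $\tilde H_j(Q)=0$ for $j\leq R_k$, with no Hurewicz required.
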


\fullref{main3} is now a direct consequence of the following result.

\begin{lem}\label{R}
Let $M, U$ and $\overline{M}$ as above,
$r=\conn(\overline{M})$ with $r\geq 1$. Then
$$R_k \geq \begin{cases}
k+r-1, &\hbox{if}\ U\cup\partial M\neq\emptyset,\\
k+r-2, &\hbox{if}\ U\cup\partial M=\emptyset .
\end{cases}$$
\end{lem}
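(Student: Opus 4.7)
First, the case $U\cup\partial M=\emptyset$ reduces to the other: when $\bar M=M$, the triple $TP^{k-2}(M)\subset TP^{k-1}(M)\subset TP^k(M)$ gives a cofiber sequence
$$\overline{TP}^{k-1}(M)\longrightarrow TP^k(M)/TP^{k-2}(M)\longrightarrow\overline{TP}^k(M).$$
Granting the first-case bound $\conn(\overline{TP}^j(M))\geq j+r-1$, the two ends are $(k+r-2)$- and $(k+r-1)$-connected, and the long exact sequence in homology (with Hurewicz, valid since $r\geq 1$) forces the middle to be at least $(k+r-2)$-connected.

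The crux is the first case. I will prove in fact the more general claim: for any based $r$-connected simplicial complex $Y$ with $r\geq 1$ and any $k\geq 1$, $\overline{TP}^k(Y)$ is $(k+r-1)$-connected. The base case $k=1$ is immediate as $\overline{TP}^1(Y)=Y$. For $k\geq 2$, the essential observation is
$$\overline{TP}^k(Y)=\sp{k}(Y)\big/\bigl(\sp{k-1}(Y)\cup S_k\bigr)=\bsp{k}(Y)\big/\bar S_k,$$
with $S_k\subset\sp{k}(Y)$ the fat diagonal and $\bar S_k$ its image in $\bsp{k}(Y)$: once $TP^{k-1}(Y)=\sp{k-1}(Y)$ is collapsed, the $TP^k$-defining identification $[x,x,y_1,\ldots]\sim[*,*,y_1,\ldots]$ just collapses $S_k$. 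Nakaoka's theorem (\fullref{nakak}) gives that $\bsp{k}(Y)$ is $(k+r-1)$-connected, so the cofibration $\bar S_k\hookrightarrow\bsp{k}(Y)\twoheadrightarrow\overline{TP}^k(Y)$ reduces the proof, via the long exact sequence and Hurewicz, to showing $\conn(\bar S_k)\geq k+r-2$.

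The map $(x,z)\mapsto[x,x]+z$ from $Y\times\sp{k-2}(Y)$ to $\sp{k}(Y)$ has image $S_k$ and descends, after collapsing the basepoint preimages $\{*\}\times\sp{k-2}(Y)\cup Y\times\sp{k-3}(Y)$, to a surjection
$$Y\wedge\bsp{k-2}(Y)\twoheadrightarrow\bar S_k$$
that is a bijection on the top (single-double) stratum. By Nakaoka, $\bsp{k-2}(Y)$ is $(k+r-3)$-connected, so $Y\wedge\bsp{k-2}(Y)$ is $(k+2r-2)$-connected. The non-injectivity is confined to the higher-multiplicity strata of $S_k$---configurations with two or more coincident pairs---each of which is a quotient of some $Y^{\wedge m}\wedge\bsp{k-2m}(Y)$ for $m\geq 2$ and hence, by Nakaoka and iteration, even more highly connected. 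A Mayer--Vietoris bookkeeping on the multiplicity stratification (in the spirit of Kallel--Karoui \cite{kk}) then propagates the connectivity to $\bar S_k$, giving $\conn(\bar S_k)\geq k+r-2$. The principal obstacle is precisely this stratification analysis of the fat diagonal.
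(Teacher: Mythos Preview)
Your reduction of the closed case to the first via the triple $TP^{k-2}\subset TP^{k-1}\subset TP^k$ is exactly what the paper does. The problem is entirely in your treatment of the first case.

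You have misread Nakaoka's theorem. \fullref{nakak} is \emph{not} a statement about $\bsp{k}(Y)=Y^{(k)}_{\mathfrak{S}_k}$; it is a statement about $(Y^{(k)}/\Delta_{\rm fat})_{\mathfrak{S}_k}$, the smash power with the fat diagonal collapsed \emph{before} passing to $\mathfrak{S}_k$--orbits. But that space is precisely your $\bsp{k}(Y)/\bar S_k$, which by your own identification equals $\overline{TP}^k(Y)$. In other words, Nakaoka's theorem applies \emph{directly} to $\overline{TP}^k(Y)$ and gives $\conn(\overline{TP}^k(Y))\geq k+r-1$ in one line. This is exactly the paper's argument: the equality $\overline{TP}^k(Y)=(Y^{(k)}/\Delta_{\rm fat})_{\mathfrak{S}_k}$ is noted and Nakaoka is invoked, end of proof.

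Because you applied Nakaoka to the wrong space, you were forced into the detour through the cofibration $\bar S_k\hookrightarrow\bsp{k}(Y)\twoheadrightarrow\overline{TP}^k(Y)$ and a separate connectivity estimate for $\bar S_k$. That estimate is the part you yourself flag as the ``principal obstacle,'' and indeed your sketch there is not a proof: the surjection $Y\wedge\bsp{k-2}(Y)\twoheadrightarrow\bar S_k$ does not by itself transfer connectivity downward, and the promised Mayer--Vietoris bookkeeping over the multiplicity strata is asserted rather than carried out. (Incidentally, the bound you quote for $\bsp{k}(Y)$ is true---it follows from \fullref{connectivity}, proved later---but that is not Nakaoka.) The entire detour evaporates once you recognise that $\overline{TP}^k(Y)$ \emph{is} Nakaoka's space.
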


The proof of this key lemma is based on a computation of Nakaoka
\cite[Proposition 4.3]{nakaoka}.  We write $Y^{(k)}$ for the $k$--fold smash
product of a based space $Y$ and $X_{\mathfrak{S}_k}$ the orbit space of a
$\mathfrak{S}_k$--space $X$.

\begin{thm}[Nakaoka]\label{nakak}
If $Y$ is $r$--connected, then
$(Y^{(k)}/\Delta_{\rm fat})_{\mathfrak{S}_k}$ 
is $r+k-1$--connected. 
\end{thm}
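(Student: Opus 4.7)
The plan is to identify the orbit space with the reduced truncated product $\overline{TP}^k(Y)$ and then establish the connectivity bound via cellular analysis. \textbf{First,} unwinding definitions, $Y^{(k)} = Y^k/\bigl(\bigcup_i\{y_i=*\}\bigr)$, and $Y^{(k)}/\Delta_{\rm fat}$ further collapses the image of the fat diagonal. Taking $\mathfrak{S}_k$-orbits yields $SP^k(Y)/(SP^{k-1}(Y)\cup S)$, where $S\subset SP^k(Y)$ is the singular locus of tuples with coincidences. Under the quotient $SP^k(Y)\to TP^k(Y)$, the preimage of $TP^{k-1}(Y)$ is exactly $SP^{k-1}(Y)\cup S$: any coincidence $[x,x,z_1,\ldots,z_{k-2}]$ is identified via $\sim$ with $[*,*,z_1,\ldots,z_{k-2}]\in SP^{k-2}(Y)\subset SP^{k-1}(Y)$, while a tuple with no coincidence and no basepoint has trivial equivalence class. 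Hence $(Y^{(k)}/\Delta_{\rm fat})_{\mathfrak{S}_k} = TP^k(Y)/TP^{k-1}(Y) = \overline{TP}^k(Y)$, and it suffices to show $\overline{TP}^k(Y)$ is $(r+k-1)$-connected.

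\textbf{Second,} since $TP^k$ and $\overline{TP}^k$ are based-homotopy functors, I may replace $Y$ by a homotopy equivalent CW complex whose only cell in dimensions $\leq r$ is the basepoint; Dold's theorem \cite{dold} further allows me, if convenient, to reduce $Y$ to a wedge of Moore spaces of dimension $\geq r+1$ realizing $H_*(Y;\mathbb{Z})$ (since the homology of symmetric products depends only on the homology of $Y$).

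\textbf{Third,} I would argue by induction on $k$ using the cofibre sequence $TP^{k-1}(Y)\to TP^k(Y)\to\overline{TP}^k(Y)$, analyzing the cell structure inherited from the product CW-structure on $Y^k$. Cells of $SP^k(Y)$ are indexed by multisets of cells of $Y$, with dimension equal to the sum of constituent dimensions. Surviving cells in $\overline{TP}^k(Y)$ arise either from (a) $k$-tuples of distinct positive-dimensional cells of $Y$, which have dimension $\geq k(r+1)\geq r+k$, or (b) ``suspension-type'' contributions produced by the $TP$-identification of diagonal strata --- illustrated by $\overline{TP}^2(S^2)\simeq S^4\vee S^3$, where the $S^3$ summand arises from collapsing the diagonal conic $S^2\subset\mathbb{C}P^2$ inside $SP^2(S^2)=\mathbb{C}P^2$. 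A careful dimension count then shows that the minimal nontrivial cell in $\overline{TP}^k(Y)$ sits in dimension $\geq r+k$, yielding the claimed connectivity. The main obstacle is that $S$ is not a subcomplex of the naive product CW-structure on $SP^k(Y)$; handling the bookkeeping precisely requires an $\mathfrak{S}_k$-equivariant CW decomposition of $Y^k$ respecting the diagonal stratification, which is the core of Nakaoka's original argument.
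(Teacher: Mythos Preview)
Your identification $(Y^{(k)}/\Delta_{\rm fat})_{\mathfrak{S}_k}=\overline{TP}^k(Y)$ is correct and matches what the paper records (it is used verbatim in the proof of \fullref{main3}). Beyond that, however, the paper does not attempt an independent proof of the connectivity bound: the homological statement is taken directly from Nakaoka \cite[Proposition 4.3]{nakaoka}, and the only argument the paper supplies is the upgrade from a homology vanishing statement to genuine connectivity. That upgrade is done \emph{not} by a cell count but by a short Van~Kampen argument: for $k\geq 2$ one has $\pi_1(\bsp{k}Y)=0$ (since $\pi_1(\sp{k-1}Y)\to\pi_1(\sp{k}Y)$ is onto, both being $H_1(Y)$), and then Van~Kampen applied to the pushout defining $Y^{(k)}/\Delta_{\rm fat}$ over $\bsp{k}Y$ forces $\pi_1=0$, after which Hurewicz converts Nakaoka's homology bound into the stated connectivity. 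So the paper's route is ``homology (cited) $+$ $\pi_1$ (Van~Kampen) $+$ Hurewicz'', whereas you aim for a direct CW model with no positive cells below dimension $r+k$.

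Two points where your sketch is weaker than you may realize. First, your appeal to Dold's theorem in Step~2 is not justified for truncated products: Dold's result says $H_*(\sp{n}X)$ depends only on $H_*(X)$, and the paper invokes it only for $\bsp{n}$, never for $TP^n$ or $\overline{TP}^n$. You cannot freely replace $Y$ by a wedge of Moore spaces here without a separate argument. (Taking a minimal CW model for $Y$ with no cells in dimensions $1,\ldots,r$ is fine and suffices for your purposes anyway.) Second, your Step~3 is, as you yourself concede, a plan rather than a proof: the dimension count for the ``type~(b)'' cells coming from the diagonal identification is asserted by example but not carried out, and the obstacle you name---that the singular set $S$ is not a subcomplex of the product CW structure on $\sp{k}Y$---is exactly the content of Nakaoka's argument. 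In effect both you and the paper defer the substance to Nakaoka; the paper is just more explicit that it is doing so, and it isolates the one piece Nakaoka does not provide (simple connectivity) and dispatches it cleanly.
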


\begin{rem}
  In fact nakaoka only proves the homology version of this result and
  also assumes $r\geq 1$. An inspection of his proof shows that $r\geq
  0$ works as well. Also his homology statement can be upgraded to a
  genuine connectivity statement. To see this, we can assume that
  $k\geq 2$ (the case $k=1$ being trivial).  One needs to show in that
  case that $\pi_1((Y^{(k)}/\Delta_{\rm fat})_{\mathfrak{S}_k})=0$.  This
  follows by an immediate application of Van Kampen and the fact that
  $\pi_1(Y^{(k)}/\mathfrak{S}_k)=\pi_1(\bsp{k}Y)=0$ for $k\geq 2$. To
  see this last statement, recall that the natural map
  $\pi_1(Y)\lrar\pi_1(\sp{k}Y)$ factors through $H_1(Y;\bbz )$ and
  then induces an isomorphism $H_1(Y;\bbz)\cong\pi_1(\sp{k}Y)$ when
  $k\geq 2$ (Smith \cite{smith}). But if $\sp{k-1}(Y)\hookrightarrow
  \sp{k}(Y)$ induces a surjection on fundamental groups, then the
  cofiber is simply connected (Van Kampen).  \end{rem}

\begin{proof} (of \fullref{R} and \fullref{main3}) By construction we
  have the equality $\overline{TP^k}(Y)=
  (Y^{(k)}/\Delta_{\rm fat})_{\mathfrak{S}_k}$.  The connectivity of
  $TP^k(M)/TP^{k-1}(M)$ is (at least) $k+r-1$ according to \fullref{nakak}, while that of $TP^{k-1}(M)/TP^{k-2}(M)$ is at least $k+r-2$
  which means that $\conn (TP^k(M)/TP^{k-2}(M)) \geq k+r-2$ (by the long exact
  sequence of the triple $(TP^{k-2}(M),TP^{k-1}(M),TP^k(M))$). This produces
  the lower bounds on $R_k$ in \fullref{R}.  Since the cohomology of $B(M-U,k)$
  starts to vanish at $dk-R_k$ (\fullref{conR}), \fullref{main3}
  follows.
\end{proof}


\section{Braid spaces of punctured manifolds}\label{punctured}

We start with a simple proof of \fullref{main}, \eqref{main11};\
$\dim M=d\geq 2$ throughout.

\begin{proof}[Proof of \fullref{main}, \eqref{main11}]
This is a direct computation (with $M$ closed)
\begin{eqnarray*}
 H^j(B(M,n);\bbf_2)
&\cong& H_{nd-j}(TP^nM, TP^{n-2}M;\bbf_2 )\ \ \ \ \ \ \ \ \ \ \ \ \ \ \ \ \ \
\ \ \ \ \ \
\hbox{(\fullref{duality})}\\
&\cong&
\tilde H_{nd-j}(\overline{TP}^nM;\bbf_2 )\oplus
\tilde H_{nd-j}(\overline{TP}^{n-1}M;\bbf_2 ) \ \ \ \ \ \ (\text{by}\ \ref{property}, (2))\\
&\cong&H^j(B(M-\{p\},n);\bbf_2)\oplus
H^{j-d}(B(M-\{p\},n-1);\bbf_2 )
\end{eqnarray*}
In this last step we have rewritten
$H_{nd-j}$ as
$H_{(n-1)d-(j-d)}$ and reapplied
\fullref{duality}.
\end{proof}

\textbf{Example}\qua When $M=S^d$ and $n=2$, then
$B(S^d,2)\simeq\bbr P^d$ and $B(S^d-p,2)=B(\bbr^d,2) = \bbr P^{d-1}$
in full agreement with the splitting. This shows more importantly that
the splitting is not valid for coefficients other than $\bbf_2$.  The
general case is covered by the following observation of Segal and
McDuff.

\begin{lem}\label{longexact}{\rm (McDuff \cite{dusa})}\qua There 
is a long exact sequence:
\begin{align*}
\lrar H_{*-d+1}(B(M-*,n-1))&\lrar
H_*(B(M-*,n))\\&\lrar H_*(B(M,n))\lrar
H_{*-d}(B(M-*,n-1))\cdots
\end{align*}
\end{lem}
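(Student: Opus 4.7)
The plan is to deduce this long exact sequence from Poincar\'e--Lefshetz duality (\fullref{duality}) combined with the long exact sequence of the triple $(TP^{n-2}(M), TP^{n-1}(M), TP^n(M))$ of truncated symmetric products; equivalently, from the cofibration $\overline{TP}^{n-1}(M)\hookrightarrow TP^n(M)/TP^{n-2}(M)\twoheadrightarrow \overline{TP}^n(M)$ and its associated homology long exact sequence.

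First I would write down the long exact sequence of the triple
\begin{align*}
\cdots\to H_j(TP^{n-1}M,TP^{n-2}M) &\to H_j(TP^nM,TP^{n-2}M) \\
&\to H_j(TP^nM,TP^{n-1}M) \to H_{j-1}(TP^{n-1}M,TP^{n-2}M)\to\cdots
\end{align*}
and apply \fullref{duality} to each of the three relative groups. Since $M$ is closed one has $\overline{M}=M$, and the identifications read
\begin{align*}
H_j(TP^nM,TP^{n-1}M) &\cong H^{nd-j}(B(M-*,n);\pm\bbz),\\
H_j(TP^nM,TP^{n-2}M) &\cong H^{nd-j}(B(M,n);\pm\bbz),\\
H_j(TP^{n-1}M,TP^{n-2}M) &\cong H^{(n-1)d-j}(B(M-*,n-1);\pm\bbz).
\end{align*}
Setting $i=nd-j$, the third term reads $H^{i-d}(B(M-*,n-1);\pm\bbz)$, which translates the triple sequence into the cohomology long exact sequence
\begin{align*}
\cdots\to H^{i-d}(B(M-*,n-1)) &\to H^i(B(M,n)) \\
&\to H^i(B(M-*,n)) \to H^{i-d+1}(B(M-*,n-1)) \to \cdots
\end{align*}

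The final step is to dualize---either pointwise with field coefficients, or intrinsically by applying Poincar\'e--Lefshetz duality on the three open manifolds $B(M,n)$, $B(M-*,n)$ and $B(M-*,n-1)$---in order to obtain the stated homology sequence. The $H_{*-d}$ shift appearing there is precisely accounted for by the dimension gap between $B(M-*,n-1)$, which is $(n-1)d$-dimensional, and the other two $nd$-dimensional braid spaces.

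The main obstacle I anticipate is the bookkeeping of the orientation sheaves $\pm\bbz$, which must cancel coherently through the three invocations of duality (compare \fullref{folklore} and \fullref{twisted}); in practice one runs the argument first over $\bbf_2$, where the twists are invisible, and then treats the even-dimensional orientable case separately, where Poincar\'e duality holds with untwisted integer coefficients on all three manifolds simultaneously.
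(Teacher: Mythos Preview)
Your approach via the triple $(TP^{n-2}M,TP^{n-1}M,TP^nM)$ and \fullref{duality} is genuinely different from the paper's, and up through the twisted-cohomology long exact sequence it is correct. The paper argues geometrically instead: replacing $B(M,n)$ by the homotopy-equivalent subspace $B'(M,n)$ of configurations having at most one point in a small open disc $U$ about~$*$, one sees directly that the cofiber of the inclusion $B(M-\bar U,n)\hookrightarrow B'(M,n)$ is the half-smash $S^d\rtimes B(M-\bar U,n-1)=\Sigma^d B(M-\bar U,n-1)_+$. The long exact sequence of the pair then gives the statement with \emph{any} coefficients.

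Your route carries two costs. First, the final ``dualize'' step is only clean over a field (and, unless $M$ is even-dimensional orientable, only over $\bbf_2$, as you yourself note); the alternative you propose---applying Poincar\'e--Lefshetz duality again on the open braid manifolds---does not return ordinary singular homology but Borel--Moore homology, since the input is ordinary (not compactly supported) cohomology. So the integral statement is not reached. Second, and more consequential for the paper, the geometric proof identifies the map $H_*(B(M-*,n))\to H_*(B(M,n))$ as the one induced by the inclusion of configuration spaces. This identification is exactly what is needed in the proof of \fullref{main4}, where the sequence must be compared termwise, via the scanning maps, with the Wang sequence of the fibration $\Gamma_k(\tau^+M)\to S^d$. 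Your duality argument yields an exact sequence with the correct groups, but does not, without additional work on naturality, tell you that its maps agree with the geometric ones.
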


\begin{proof}
  Let $U$ be an open disc in $M$ of radius $<\epsilon$ and let $N =
  M-U$. We have that $B(M-*,n)\simeq B(N,n)$. There is an obvious
  inclusion $B(N,n)\lrar B(M,n)$ and so we are done if we can show
  that the cofiber of this map is $\Sigma^dB(N,n-1)_+$.  To that end
  using a trick as in \cite{dusa} (proof of theorem 1.1) we replace
  $B(M,n)$ by the homotopy equivalent model $B'(M,n)$ of
  configurations $[x_1,\ldots, x_n]\in B(M,n)$ such that at most one
  of the $x_i$'s is in $U$. The cofiber of $B(N,n)\hookrightarrow
  B'(M,n)$ is a based space at $*$ and consists of pairs $(x,D)\in
  \bar U\times B(N,n-1)$ such that if $x\in\partial\bar U$ then
  everything is collapsed out to $*$. But $U\cong D^d$ and $\bar
  U/\partial \bar U = S^d$ so that the cofiber is the half-smash
  product $S^d\rtimes B(N,n-1) = \Sigma^dB(N,n-1)_+$ as asserted.
\end{proof}

In order to prove \fullref{main} we need the following result of Mostovoy.

\begin{lem}{\rm(Mostovoy \cite{mostovoy})}\label{circle}\qua
There is a homeomorphism $\tp{n}(S^1)\cong\bbr P^n$.\end{lem}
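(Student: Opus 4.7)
The plan is to prove $\tp{n}(S^1) \cong \bbr P^n$ by induction on $n$. For $n=1$, $\tp{1}(S^1) = S^1 = \bbr P^1$ trivially. For the base case $n=2$, the multiplication map from \fullref{cns1} realizes $\sp{2}(S^1)$ as a closed M\"{o}bius band --- the disc bundle of the tautological line bundle over $\bbr P^1 = S^1$ --- whose boundary circle is precisely the diagonal $\{[x,x]\}$. The $\tp{2}$ quotient collapses this diagonal to the single point $[*,*]$, so $\tp{2}(S^1)$ is the Thom space of the tautological line bundle over $\bbr P^1$, which is $\bbr P^2$.

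For general $n$, I first establish a normal form: iterated application of the defining relation $[z,z,w_1,\ldots,w_{n-2}] \sim [*,*,w_1,\ldots,w_{n-2}]$ shows that every element of $\tp{n}(S^1)$ has a unique representative of the shape $[x_1,\ldots,x_k,*,\ldots,*]$, where the $x_i \in S^1\setminus\{*\}$ are distinct and $n-k$ copies of the basepoint appear ($0 \le k \le n$). Representing $S^1 = [0,1]/\{0 \sim 1\}$ with $*=0$, the cyclic gap coordinates $g_0 = x_1$, $g_i = x_{i+1}-x_i$ for $1\le i\le k-1$, $g_k = 1-x_k$ (with $\sum g_i = 1$) give a homeomorphism between each stratum $B(S^1\setminus *, k)$ and the open $k$-simplex. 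Consequently $\tp{n}(S^1)$ inherits a CW structure with exactly one open cell in each dimension $0,1,\ldots,n$, matching the standard CW structure of $\bbr P^n$.

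For the inductive step, assuming $\tp{n-1}(S^1) \cong \bbr P^{n-1}$, I extend the gap parameterization of the top stratum to a characteristic map $\Phi \colon \bar{\Delta}^n \to \tp{n}(S^1)$. On $\partial \bar{\Delta}^n$, the two extreme faces $F_0 = \{g_0 = 0\}$ and $F_n = \{g_n = 0\}$ each map homeomorphically onto the top $(n-1)$-cell of $\tp{n-1}(S^1)$ via shifted gap reindexations, while each intermediate face $F_i$ ($0 < i < n$) enacts the degeneracy $[x_i,x_i] \sim [*,*]$, collapsing a direction into lower-dimensional strata. Identifying $(\bar{\Delta}^n, \partial \bar{\Delta}^n) \cong (D^n, S^{n-1})$, this presents $\tp{n}(S^1)$ as $D^n$ attached to $\bbr P^{n-1}$ along a map $S^{n-1} \to \bbr P^{n-1}$ that one verifies is homeomorphic to the standard double cover, yielding $\bbr P^n$. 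The principal obstacle is this final identification: showing the attaching map is actually the double cover, not merely homotopic to it. One can handle this by combining \fullref{property}(1), which identifies $\tp{\infty}(S^1) = K(\bbf_2,1) = \bbr P^\infty$, with the cofiber sequence $\tp{n-1}(S^1) \hookrightarrow \tp{n}(S^1) \to \overline{TP}^n(S^1)$; checking directly that $\overline{TP}^n(S^1) = S^n$ and that the attaching map has the correct mod-$2$ degree pins down the homotopy type, and the explicit CW description above then refines this to a genuine homeomorphism.
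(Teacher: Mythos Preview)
The paper does not prove this lemma --- it is cited from Mostovoy --- and only offers a Remark afterward showing that $\tp{n}(S^1)$ has the same \emph{homology} as $\bbr P^n$, via a CW structure with one cell in each dimension $0,\ldots,n$ and cellular boundary $(1+(-1)^k)\sigma^{k-1}$. Your gap-coordinate decomposition is exactly this CW structure (in a different parametrization of the simplex), so through the homology computation your argument and the paper's Remark coincide.

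Where you go beyond the paper is in arguing for the homotopy type, and here your key ingredient --- \fullref{property}(1), giving $\tp{\infty}(S^1)\simeq K(\bbf_2,1)=\bbr P^\infty$ --- does the job, though not quite in the way you phrase it. The ``mod--$2$ degree'' of the attaching map is not enough: for $n\geq 3$ the attaching map lives in $\pi_{n-1}(\bbr P^{n-1})\cong\bbz$, and degree $3$ has the same mod--$2$ reduction as degree $1$ yet yields a different homotopy type (with $\pi_{n-1}=\bbz/3$). Checking $\overline{TP}^n(S^1)=S^n$ is also vacuous here, since collapsing the $(n-1)$--skeleton of \emph{any} single--$n$--cell attachment gives $S^n$. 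What actually forces the attaching map to be $\pm 1$ is that $\tp{n}(S^1)$ is the $n$--skeleton of a CW structure on $\bbr P^\infty$ with one cell per dimension: since higher cells cannot alter $\pi_{n-1}$, one must have $\pi_{n-1}(\tp{n}(S^1))=\pi_{n-1}(\bbr P^\infty)=0$, whence $\bbz/d=0$ and $d=\pm 1$. This gives $\tp{n}(S^1)\simeq\bbr P^n$, which, as the Remark notes, is all the paper requires.

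Your last sentence, however, is unjustified: knowing the attaching map is \emph{homotopic} to the double cover, together with an explicit cell decomposition, does not ``refine to a genuine homeomorphism''. For that one needs an explicit continuous bijection, which is what Mostovoy supplies; your argument as written stops at homotopy equivalence.
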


\begin{rem} We only need that the spaces be homotopy equivalent.
It is actually not hard to see that
$\tp{n}(S^1)$ has the same homology as $\bbr P^n$ since it can be
decomposed into cells one for each dimension less than $n$ and with
the right boundary maps. The $k^{\rm th}$ skeleton is $\tp{k}(S^1)$.  Indeed
identify $S^1$ with $[0,1]/\sim$. A point in $\tp{k}(S^1)$ can be
written as a tuple $0\leq t_1\leq\cdots\leq t_k\leq 1$ with
identifications at $t_1=0, t_k=1$ and $t_i=t_{i+1}$. The set of all
such points is therefore the image $\sigma^k$ of a $k$--simplex
$\Delta^k\lrar\tp{k}(S^1)$ with identifications along the faces
$F_i\Delta^k$.  Since all faces corresponding to $t_i=t_{i+1}$ map to
the lower skeleton ($\tp{k-2}(S^1))$ and since the last face
$F_k\Delta^k$ (when $t_k=1$) is identified with the zeroth face
($t_1=0$) in $\tp{k}(S^1)$, the corresponding \textit{chain} map sends
the boundary chain $\partial\sigma_k$ to the image of
$\partial\Delta^k = \sum_{i=0}^k (-1)^iF_i\Delta^k$; that is to the
image of $F_0\Delta^k + (-1)^kF_k\Delta^k$ which is
$(1+(-1)^k)\sigma^{k-1}$.
\end{rem}

We need one more lemma.

\begin{lem} Set $\overline{TP}^0(X) = S^0$. Then
$\overline{TP}^n(X\vee Y) = \bigvee_{r+s=n} \overline{TP}^r(X)
\wedge\overline{TP}^s(Y)$.
\end{lem}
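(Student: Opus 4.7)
The plan is to exhibit an explicit homeomorphism
$$\Phi\co \bigvee_{r+s=n} \overline{TP}^r(X)\wedge \overline{TP}^s(Y) \ \lrar\ \overline{TP}^n(X\vee Y),\qquad [x_1,\ldots,x_r]\wedge[y_1,\ldots,y_s]\longmapsto [x_1,\ldots,x_r,y_1,\ldots,y_s],$$
using the canonical inclusions $X,Y\hookrightarrow X\vee Y$. The key observation is a classification of non-basepoint elements of $\overline{TP}^n(X\vee Y)$: if $[z_1,\ldots,z_n]\in TP^n(X\vee Y)$ does not lie in $TP^{n-1}(X\vee Y)$, then (i) no $z_i$ equals the wedge point $*$ and (ii) the $z_i$ are pairwise distinct, since any repetition $[z,z,z_3,\ldots,z_n]$ equals $[*,*,z_3,\ldots,z_n]\in TP^{n-2}$. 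Such a tuple therefore partitions uniquely into an $X$-part (of size $r$) and a $Y$-part (of size $s$) with $r+s=n$.

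First I would build $\Phi$ piece by piece, starting from the concatenation $\sp{r}(X)\times\sp{s}(Y)\lrar\sp{n}(X\vee Y)$ (which is continuous since the inclusions $X,Y\hookrightarrow X\vee Y$ are). Because the truncation relation in either factor produces the same relation in the wedge (repeating a point in $X$ kills two coordinates to $*$, and likewise for $Y$), this descends to $TP^r(X)\times TP^s(Y)\lrar TP^n(X\vee Y)$. Next I would check that it factors through the smash $\overline{TP}^r(X)\wedge \overline{TP}^s(Y)$: if the class $[x_1,\ldots,x_r]$ is the basepoint of $\overline{TP}^r(X)$, then either some $x_i=*$ (so the image already has a basepoint coordinate, hence lies in $TP^{n-1}(X\vee Y)$), or the tuple lies in $TP^{r-1}(X)$ after truncation (so again one coordinate can be taken to be $*$); symmetric argument for $Y$. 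These pieces glue into a well-defined continuous map $\Phi$ from the wedge, since they all agree on the common basepoint.

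For the inverse I would use the partition described above. Let $U\subset\overline{TP}^n(X\vee Y)$ be the open complement of the basepoint, and $V_{r,s}\subset\overline{TP}^r(X)\wedge\overline{TP}^s(Y)$ the analogous complements. The partition yields a set-theoretic bijection $U\cong \bigsqcup_{r+s=n}V_{r,s}$, and inspection shows this bijection is continuous (it is induced by the assignment, on the level of $\sp{n}(X\vee Y)$, sending a distinct basepoint-free tuple to the unordered pair of its $X$- and $Y$-parts, which is locally constant in the partition index). Extending by sending basepoint to basepoint on each side matches $\Phi$, and the resulting map is a continuous bijection of quotient spaces.

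The main technical obstacle is the topological bookkeeping: promoting the set-theoretic bijection to a homeomorphism. In the CW setting this is routine, because a cell structure on $X$ and $Y$ gives compatible cell structures on both sides of $\Phi$ with matching attaching data in each partition type $(r,s)$; one simply verifies that $\Phi$ is cellular and bijective on cells away from the basepoint. Alternatively, if $X$ and $Y$ are compactly generated, one applies the universal property of the quotient topology on $\overline{TP}^n(X\vee Y)=\sp{n}(X\vee Y)/(\underline{*}\cup\{\text{repetitions}\})$, noting that the partition decomposition is a closed decomposition compatible with the quotient relations.
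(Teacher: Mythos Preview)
Your proposal is correct and follows essentially the same approach as the paper: decompose a configuration in $X\vee Y$ into its $X$--part and $Y$--part, observe that the only ambiguity in this decomposition comes from the wedgepoint $*$, and note that this ambiguity disappears upon passing to the reduced truncated products. The paper's proof is a terse two-sentence version of exactly this argument; you have supplied more of the topological bookkeeping (continuity of $\Phi$ and its inverse, the CW or compactly generated justification) than the paper does.
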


\begin{proof}
  Here the smash products are taken with respect to the canonical
  basepoints of the various $\overline{TP}$'s.  A configuration
  $[z_1,\ldots, z_n]$ in $TP^n(X\vee Y)$ can be decomposed into a pair
  of the form $[x_1,\ldots, x_r]\times [y_1,\ldots, y_s]$ in
  $TP^r(X)\times TP^s(Y)$ for some $r+s = n$.  This decomposition is
  unique if we demand that the basepoint (chosen to be the wedgepoint
  $*$) is not contained in the configuration. The ambiguity coming
  from this basepoint is removed when we quotient out $TP^n(X\vee Y)$
  by $\underline{*}=TP^{n-1}(X\vee Y)$, and when we quotient out
  $\bigcup_{r+s = n}TP^r(X)\times TP^s(Y)$ by those pairs of
  configurations with the basepoint in either one of them.  The proof
  follows.
\end{proof}

We are now in a position to prove the second splitting \eqref{main22}.

\begin{proof}[Proof of \fullref{main}, \eqref{main22}] Let $Q_k = \{p_1,\ldots,
  p_k\}$ be a finite subset of $M$ of cardinality $k$. We note that
  the quotient $M/Q_k$ is of the homotopy type of the bouquet
  $M\vee\underbrace{S^1\vee\cdots\vee S^1}_{k-1}$, and that
  $\overline{TP}^l(S^1) = \bbr P^l/\bbr P^{l-1} = S^l$.  Using field
  coefficients we then have the folowing, where, whenever we quote \fullref{duality},
  we assume that either $M$ is even dimensional orientable or
  that $\bbf=\bbf_2$:
\begin{eqnarray*}
H^j(B&(M&-Q_k,n);\bbf )\\
&\cong& \tilde H_{nd-j}(\overline{TP}^n (M/Q_k))\ \ \ \ \ \ \ \ \ \ \ \ \ \ \ \ \
\hbox{(\fullref{duality} with $U\cup\partial M=Q_k$)}\\
&\cong&
 \tilde H_{nd-j}(\overline{TP}^n(M\vee\bigvee_{k-1}S^1))\\
&\cong& \tilde H_{nd-j}\left(\bigvee_{r+s_1+\cdots + s_{k-1}=n}
\overline{TP}^r(M)\wedge \overline{TP}^{s_1}(S^1)\wedge\cdots\wedge
\overline{TP}^{s_{k-1}}(S^1)\right)\\
&\cong&
 \tilde H_{nd-j}\left(\bigvee_{r+s_1+\cdots + s_{k-1}=n}
S^{n-r}\wedge \overline{TP}^rM \right)\\
&\cong&
\bigoplus_{r+s_1+\cdots + s_{k-1}=n}
 \tilde H_{nd-j-n+r}(\overline{TP}^rM )\\
&\cong&\bigoplus_r
 \tilde H_{nd-j-n+r} (\overline{TP}^rM )^{\oplus p(k-1,n-r)}\\
&\cong&\bigoplus_{r=0}^n
H^{j - (n-r)(d-1)}
(B(M-\{p\},r);\bbf )^{\oplus p(k-1,n-r)}\ \ \ \ \ \ \ \ \ \hbox{(\fullref{duality})}
\end{eqnarray*}
This is what we wanted to prove.
\end{proof}


\section{Connectivity of symmetric products}\label{bounds}

In this section we prove \fullref{connectivity} and \fullref{conntwo} of the introduction.

\begin{thm}\label{connectivity2} Suppose $X$ is a based $r$--connected 
  simplicial complex with $r\geq 1$ and let $n\geq 1$. Then
  $\bsp{n}(X)$ is $2n+r-2$--connected.
\end{thm}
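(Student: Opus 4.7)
The plan, following the template of \cite{kk}, combines two ingredients: Dold's theorem that $H_*(\sp{n}(-);\bbz)$ is a functor of integral homology \cite{dold}, and the classical Steenrod wedge-splitting for reduced symmetric products. First I would verify that $\bsp{n}(X)$ is simply connected: since $r\geq 1$ gives $H_1(X;\bbz)=0$, Smith's theorem (as recalled in the remark following \fullref{nakak}) yields $\pi_1(\sp{n}X)=0$, whence $\bsp{n}X=\sp{n}X/\sp{n-1}X$ is simply connected by Van Kampen. By Whitehead's theorem for simply-connected CW complexes, it then suffices to produce a homology equivalence $\bsp{n}Y\to\bsp{n}X$ with $\bsp{n}Y$ being $(2n+r-2)$-connected for a conveniently chosen model $Y$.

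For the model I would take $Y:=\bigvee_{i\geq r+1}M(H_i(X;\bbz),i)$, a wedge of Moore spaces concentrated in dimensions $\geq r+1$. Since $X$ is $r$-connected, obstruction theory supplies a map $Y\to X$ inducing an isomorphism on integral homology. Dold's theorem applied to $\sp{n}$, combined with Steenrod's splitting (\fullref{steenrod}), then implies the induced map $\bsp{n}Y\to\bsp{n}X$ is a homology equivalence. The Steenrod wedge-splitting for $\bsp{n}$, analogous to the formula used in Section 4 for $\overline{TP}^n$, gives
\begin{equation*}
\bsp{n}(Y) \simeq \bigvee_{\substack{(n_i)_{i\geq r+1}\\ \sum n_i=n,\ n_i\geq 0}} \bigwedge_{i\geq r+1} \bsp{n_i}\bigl(M(H_i(X),i)\bigr),
\end{equation*}
with the convention $\bsp{0}(Z):=S^0$.

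To conclude I would apply a cell-dimension bound. Each Moore space $M(G,m)$ admits a CW structure with cells only in dimensions $m$ and $m+1$ (besides the basepoint), so $M(G,m)^{(k)}$ has cells in dimensions $\geq km$; the cellular $\mathfrak{S}_k$-action descends to give $\bsp{k}(M(G,m))=M(G,m)^{(k)}/\mathfrak{S}_k$ a CW structure with cells in dimensions $\geq km$, so $\bsp{k}(M(G,m))$ is $(km-1)$-connected. For any nontrivial summand in the wedge above, with nonzero multiplicities $n_{i_1},\ldots,n_{i_s}\geq 1$ summing to $n$, the smash connectivity formula $\conn(A\wedge B)=\conn(A)+\conn(B)+1$ yields
\begin{equation*}
\conn\!\left(\bigwedge_{j=1}^s \bsp{n_{i_j}}\!\bigl(M(H_{i_j},i_j)\bigr)\right) \geq \sum_{j=1}^s (n_{i_j}i_j-1)+(s-1) = \sum_j n_{i_j}i_j-1 \geq n(r+1)-1,
\end{equation*}
using $i_j\geq r+1$. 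Thus $\bsp{n}Y$, and hence $\bsp{n}X$, is $(n(r+1)-1)$-connected; since $n(r+1)-1-(2n+r-2)=(n-1)(r-1)\geq 0$ for all $n,r\geq 1$, the theorem follows (with equality in the bound precisely when $r=1$, matching the sharp case $X=S^2$). The principal obstacle is the bridge from Dold's purely homological statement to a connectivity (homotopy) statement, handled here by the independently-established simple-connectedness of $\bsp{n}X$ together with Whitehead's theorem.
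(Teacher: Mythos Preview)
Your argument contains a genuine gap at the cell-dimension step. You assert that the cellular $\mathfrak{S}_k$-action on $M(G,m)^{(k)}$ descends to give $\bsp{k}(M(G,m))$ a CW structure with cells only in dimensions $\geq km$, hence that $\bsp{k}(M(G,m))$ is $(km-1)$-connected. This is false: the $\mathfrak{S}_k$-action on the smash power is \emph{not} free on cells---whenever the same cell of $M(G,m)$ is repeated in a product cell, the isotropy is a nontrivial Young subgroup, and the orbit space of such a cell is not a disk of the same dimension. A concrete counterexample is $k=2$, $G=\bbz$, $m=3$: the paper records $\bsp{2}(S^3)\simeq\Sigma^{4}\bbr P^{2}$, which has $\tilde H_5=\bbz/2$ and is therefore only $4$-connected, whereas your bound predicts $5$-connectedness. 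More generally $\bsp{2}(S^{r+1})\simeq\Sigma^{r+2}\bbr P^{r}$ is exactly $(r+2)$-connected, matching the theorem's sharp bound $2n+r-2$ but contradicting your $n(r+1)-1=2r+1$ for every $r\geq 2$. In fact your inequality $n(r+1)-1\geq 2n+r-2$ is strict whenever $n,r\geq 2$, so you are proving a statement strictly stronger than the (sharp) theorem---already a warning sign.

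A secondary issue: the map $Y\to X$ you invoke, from a wedge of Moore spaces, generally does not exist (for $i>r+1$ the Hurewicz map $\pi_i(X)\to H_i(X)$ need not be onto, so the generators of $H_i$ need not be spherical). This is not fatal---Dold's theorem gives the abstract isomorphism $H_*(\bsp{n}Y)\cong H_*(\bsp{n}X)$ without a map, and together with simple-connectedness and Hurewicz that suffices---but the sentence as written is incorrect. The real obstruction, however, is the first one: reducing to Moore spaces does not help, because the connectivity of $\bsp{k}(M(G,m))$ is itself exactly the content of the theorem (it equals $2k+m-3$, not $km-1$). The paper's proof circumvents this by desuspending (writing $H_*X=H_*\Sigma Y$), peeling off an $S^n$ factor, and invoking Nakaoka's connectivity bound for $(Y^{(n)}/\Delta_{\rm fat})_{\mathfrak{S}_n}$---a result that \emph{does} handle the passage to the $\mathfrak{S}_n$-quotient correctly.
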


\begin{proof} The claim is tautological for $n=1$ and so we assume throughout
  that $n>1$.  We use some key ideas from Arone and Dwyer \cite{dwyer} and
  Kallel and Karoui \cite{kk}.  Start with $X$ simply connected and
  choose a CW complex $Y$ such that $H_*(\Sigma Y)=H_*(X)$. If $X$ is
  based and $r$--connected, then $Y$ is based and $(r-1)$--connected. A
  crucial theorem of Dold \cite{dold} now asserts that $H_*(\sp{n}X)$,
  and hence $H_*(\bsp{n}X)$, only depends on $H_*(X)$ so that in our
  case $H_*(\bsp{n}X)=H_*(\bsp{n}\Sigma Y)$. As before we write
  $X^{(n)}$ the $n$--fold smash product of $X$ so that we can identify
  $\bsp{n}X$ with the quotient $X^{(n)}/\mathfrak{S}_n$ by the action
  of $\mathfrak{S}_n$.  It will also be convenient to write
  $X^{(n)}_{\mathfrak{S}_n}:=X^{(n)}/\mathfrak{S}_n$.  Note that
  $X^{(n)}$ has a preferred basepoint which is fixed by the action of
  $\mathfrak{S}_n$ (ie the action is \textit{based}).  By
  construction we have equivalences
\begin{equation}\label{bspn}
\bsp{n}(\Sigma Y) = (\Sigma Y)^{(n)}_{\mathfrak{S}_n}
= (S^1\wedge Y)^{(n)}_{\mathfrak{S}_n} = (S^1)^{(n)}\wedge_{\mathfrak{S}_n}Y^{(n)}
\end{equation}
where here $A\wedge_{\mathfrak{S}_n}B$ is the notation for the quotient by
the diagonal action of $\mathfrak{S}_n$ on $A\wedge B$ where $A$ admits
a based right action of $\mathfrak{S}_n$ and $B$ a based left action.

We next observe that the quotient $(S^1)^{(n)}/K$ is contractible for
any non-trivial Young subgroup $K=\mathfrak{S}_{k_1}\times
\mathfrak{S}_{k_2}\times\cdots\times\mathfrak{S}_{k_r}\subset
\mathfrak{S}_n$, $\sum k_i=n$.  
This follows from the fact that $(S^1)^{(n)}/K =
S^n/K=S^{k_1}/{\mathfrak S_{k_1}}\wedge \cdots\wedge
S^{k_r}/{\mathfrak S_{k_r}}$, and that for some $k_i\geq 2$,
$S^{k_i}/{\mathfrak S_{k_i}}=\bsp{k_i}(S^1)$ is contractible since
the basepoint inclusion $\sp{k_i-1}(S^1)\lrar\sp{k_i}(S^1)$ is a
homotopy equivalence between two copies of the circle (see section
\ref{braids}).
We can then use \cite[Proposition 7.11]{dwyer} to
conclude that $(S^1)^{(n)}\wedge_{\mathfrak{S}_n} \Delta_{\rm fat}$ is
contractible with $\Delta_{\rm fat}$ as in \eqref{fat}.  This subspace can
then be collapsed out in the expression of $\bsp{n}(\Sigma Y)$ of
\eqref{bspn} without changing the homotopy type and one obtains
\begin{equation}\label{finalform}
\bsp{n}(\Sigma Y)\simeq (S^1)^{(n)}\wedge_{\mathfrak{S}_n}
\left(Y^{(n)}/\Delta_{\rm fat}\right)\ .
\end{equation}
The point of expressing $\bsp{n}(X)$ in this form is to take advantage
of the fact that the action of $\mathfrak{S}_n$ on $Y^{(n)}/\Delta_{\rm fat}$ is
based free (ie, free everywhere but at a single fixed point say $x_0$
to which the entire $\Delta_{\rm fat}$ is collapsed out).

Consider the projection $W_n :=
S^{n}\times_{\mathfrak{S}_n}(Y^{(n)}/\Delta_{\rm fat}) \rightarrow
(Y^{(n)}/\Delta_{\rm fat})_{\mathfrak{S}_n}$. This map is a fibration on the
complement of the point $x_0$ with fiber $S^n$ there, and over $x_0$ the fiber
is $F_0 = S^{n}/\mathfrak{S}_n$ (which is contractible).  The space
$\bsp{n}(\Sigma Y)$ in \eqref{finalform} is obtained from $W_n$ by collapsing
out $F_0$ (being contractible this won't matter) and
$X_n:=*\times_{\mathfrak{S}_n}(Y^{(n)}/\Delta_{\rm fat}) =
(Y^{(n)}/\Delta_{\rm fat})_{\mathfrak{S}_n}$.  Consider the sequence of maps
$(S^n,*)\lrar (W_n,X_n)\fract{}{\lrar} (X_n,X_n)$. This is a fibration away from the
point $x_0\in X$ as we pointed out.  One can then construct a relative serre
spectral sequence (as in \cite[Section 6]{kk}) with $E^2$--term:
$$E^2 = \tilde H_*(X_n; \tilde H_*(S^n))\ \Longrightarrow\
H_*(W_n,X_n)\cong H_*(\bsp{n}(\Sigma Y))$$ 
But $X_n$ is $r+n-2$--connected (\fullref{nakak}), $r+n-2\geq 1$, so that
the $E^2$--term is made out of terms of homological dimension $r+n-1+n =2n+r-1$
or higher which implies that $\bsp{n}(\Sigma Y)=\bsp{n}(X)$ 
has trivial homology up to $2n+r-2$. 
But $\bsp{n}(X)$ is simply connected if $n\geq 2$ (see remark after
\fullref{nakak}) and the proof follows by the Hurewicz Theorem.
\end{proof}

\begin{exam} There is a homotopy equivalence
\ $\bsp{2}(S^k)\simeq \Sigma^{k+1}\bbr P^{k-1}$\ 
(see Hatcher \cite[Chapter 4 , Example 4K.5]{hatcher}). This
space is $k+1 = 4+ (k-1)-2$--connected as predicted and this is sharp. 
 \end{exam}


\subsection{Two dimensional complexes}\label{twodim}
To prove \fullref{conntwo} we use a minimal and explicit complex
constructed in \cite{ks2}. The existence of this complex is due to the simple
but exceptional property in dimension two that $\sp{n}{D}$, where
$D\subset\bbr^2$ is a disc, is again a disc of dimension $2n$.  Write $X =
\bigvee^wS^1\cup (D^2_1\cup\cdots\cup D^2_r)$ and denote by $\star $ the
symmetric product at the chain level.  In \cite{ks2} we constructed a space
$\tsp{n}X$ homotopy equivalent to $\sp{n}(X)$ and such that $\tsp{}X\simeq
\coprod_{n\geq 0}\tsp{n}X$ has a multiplicative cellular chain complex
generated under $\star $ by a zero dimensional class $v_0$, degree one classes
$e_1,\ldots, e_w$ and degree $2s$ classes $\sp{s}D_i$, $1\leq i\leq r$, $1\leq
s$, under the relations
\begin{eqnarray*}
e_i\star e_j = -e_j\star e_i \,\ (i \neq j)\ ,\ \ e_i\star e_i = 0 \ ,\ \\
\sp{s}D_i\star \sp{t}D_i = {s+t\choose t}\sp{s+t}D_i \ .
\end{eqnarray*}
The cellular boundaries on these cells were also explicitly computed
(but we don't need them here). The point however is that
a cellular chain complex for $\tsp{n}(X)$ consists of the subcomplex
generated by cells
$$v_0^r\star e_{i_1}\star \cdots \star e_{i_t}\star
\sp{s_1}(D_{j_1})\star \cdots \star \sp{s_l}(D_{j_l})$$
with $r+t+s_1+\cdots +s_l= n$ and $t\leq w$ where $w$ again is the number
of leaves in the bouquet of circles. The dimension of such a cell is
$t+2(s_1+\cdots + s_l)$ for pairwise distinct indices among
the $e_i$'s.

A reduced cellular complex for $\bsp{n}X$ can then be taken to be
the quotient of $C_*(\tsp{n}X)$ by the summand
$v_0C_*(\tsp{n-1}X)$ and this has cells of the form
$$e_{i_1}\star \cdots \star e_{i_t}\star \sp{s_1}(D_{j_1})\star \cdots \star \sp{s_l}(D_{j_l})$$
with $t+s_1+\cdots +s_l= n$. The dimension of such a cell is $t+2(s_1+\cdots +
s_l)=2n-t$.  The smallest such dimension is $2n-\min(w,n)$. This means that
$\conn(\tsp{n}X/\tsp{n-1}X) = \conn(\bsp{n}X) \geq 2n-\min(w,n)-1$ and
\fullref{conntwo} follows.

\begin{exam}
  A good example to illustrate \fullref{conntwo} is when $S$ is a
  closed Riemann surface of genus $g$. It is well-known that for $n\geq 2g-1$,
  $\sp{n}(S)$ is an analytic fiber bundle over the Jacobian (by a result of
  Mattuck)
$$\bbp^{n-g}\lrar\sp{n}(S)\fract{\mu}{\lrar} J(S)$$
where $\mu$ is the Abel--Jacobi map. In fact this is the projectivisation of an
$n-g+1$ complex vector bundle over $J(S)$. Collapsing out fiberwise the hyperplanes
$\bbp^{n-g-1}\subset\bbp^{n-g}$ we get a fibration $\zeta_n \co  S^{2n-2g}\lrar
E_n\lrar J(S)$ with a preferred section, so that for $n\geq 2g$, $\bsp{n}(S)$
is the cofiber of this section. This is $2n-2g-1$--connected as predicted,
and in fact $\tilde H_*(\bsp{n}(S))=
\sigma^{2n-2g}H_*(J(S))$ where $\sigma$ is a formal
suspension operator which raises degree by one.
\end{exam}

\subsection{Connectivity and truncated products}\label{spec}
The homology of truncated products, and hence of braid spaces,
is related to the homology of symmetric products via a very useful
spectral sequence introduced in \cite{bcm}.
This spectral sequence has been used and adapted with relative
success to other situations; eg \cite{ks}. The starting point is the
duality in \fullref{duality}. The problem of computing
$H^*(B(M,n);\bbf )$ becomes then one of computing the homology of the
relative groups $H_{*}(TP^n\overline{M}, TP^{n-2}\overline{M};\bbf
)$. The key tool is the following \textit{Eilenberg--Moore type}
spectral sequence with field coefficients $\bbf$.

\begin{thm}\label{specseq}{\rm\cite{bcm}}\qua
  Let $X$ be a connected space with a non-degenerate basepoint. Then there is
  a spectral sequence converging to $H_{*}(TP^{n}(X), TP^{n-1}(X);\bbf )$ ,
  with $E^1$--term
\begin{equation}\label{unpunctured}
  \bigoplus_{i+ 2j= n}
  H_*(\sp{i}X,\sp{i-1}X)\tensor H_*(\sp{j}(\Sigma X),\sp{j-1}(\Sigma X))
\end{equation}
and explicit $d^1$ differentials.
\end{thm}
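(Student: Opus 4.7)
The plan is to build the spectral sequence from a filtration of $\sp{n}X$ by degree of degeneracy and push it through the quotient $\sp{n}X \to TP^n X$; the target $H_*(TP^n X, TP^{n-1}X;\bbf) = \tilde H_*(\overline{TP}^n X;\bbf)$ is the reduced homology of the quotient $\bsp{n} X / \bar D_n$, where $\bar D_n$ is the image in $\bsp{n}X$ of the fat diagonal $D_n \subset \sp{n}X$. Concretely, for $0 \leq j \leq \lfloor n/2 \rfloor$, set
$$D_n^{(j)} = \im\Bigl(\sp{j}(X) \times \sp{n-2j}(X) \longrightarrow \sp{n}X\Bigr),$$
where the map sends $([y_1,\ldots,y_j],[z_1,\ldots,z_{n-2j}])$ to the configuration $[y_1,y_1,\ldots,y_j,y_j,z_1,\ldots,z_{n-2j}]$. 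This produces a decreasing filtration $\sp{n}X = D_n^{(0)} \supset D_n^{(1)} = D_n \supset D_n^{(2)} \supset \cdots$, which I will transport through the quotient maps into $\overline{TP}^n X$ to obtain the filtration whose associated spectral sequence is the one asserted.

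The computational heart is the identification of the filtration subquotients. The locally closed stratum $D_n^{(j)} \setminus D_n^{(j+1)}$ parametrizes configurations with exactly $j$ coincident pairs and $n-2j$ distinct free singletons, and up to symmetric group action it is a product $\sp{j}(X) \times B(X,n-2j)/\mathfrak{S}_{n-2j}$. Passing to $TP^n X$ collapses each pair coordinate $y_k$ at $y_k = *$, since $[y_k,y_k,\ldots] \sim [*,*,\ldots]$ drops into $D_n^{(j+1)}$; this based collapse of $X$ at the basepoint is exactly the reduced suspension $\Sigma X$ at the level of the $j$-fold symmetric product, which is the source of the factor $H_*(\sp{j}\Sigma X, \sp{j-1}\Sigma X)$ in the $E^1$-term. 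The singleton contribution reduces to $H_*(\sp{i}X, \sp{i-1}X)$ with $i = n-2j$ by Steenrod's splitting (\fullref{steenrod}). Combined via a K\"unneth identification over the field $\bbf$, the $j$-th subquotient is
$$H_*(\sp{i}X, \sp{i-1}X) \otimes H_*(\sp{j}\Sigma X, \sp{j-1}\Sigma X),\qquad i + 2j = n,$$
and summing over $j$ yields the claimed $E^1$-term. The $d^1$-differentials are then read off the geometric attachments: two free singletons colliding into a new pair (reducing $i$ by $2$ and increasing $j$ by $1$), and a pair coordinate collapsing onto the singleton stratum.

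The hard part is the identification of these subquotients. The appearance of $\Sigma X$ rather than $X$, and hence of $\sp{j}\Sigma X$ rather than $\sp{j}X$, is the essential subtlety: one must track the basepoint bookkeeping carefully to verify that collapsing the pair coordinate at the basepoint produces exactly the reduced suspension, and that the induced filtration on the reduced truncated product is well-behaved with respect to these identifications. A careful construction, together with the explicit $d^1$-differentials, is carried out in \cite{bcm}.
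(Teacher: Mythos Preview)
The paper does not give its own proof of this theorem; it is quoted from \cite{bcm}, with only the remark that field coefficients are needed for the K\"unneth identification of the $E^1$-term. Your decision to sketch an argument and then defer the careful construction to \cite{bcm} is therefore consistent with what the paper does, and there is no proof in the paper to compare against.

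That said, the sketch itself has a genuine gap. You filter $\sp{n}X$ by the degeneracy loci $D_n^{(j)}$ and propose to ``transport through the quotient maps into $\overline{TP}^n X$''. But the defining relation of $TP^nX$ sends any configuration with a repeated pair into $TP^{n-2}X\subset TP^{n-1}X$, so in $\overline{TP}^n X = TP^nX/TP^{n-1}X$ the image of $D_n^{(j)}$ is the basepoint for every $j\geq 1$. The transported filtration is therefore trivial, and its spectral sequence carries no information beyond $\tilde H_*(\overline{TP}^n X)$ itself. Relatedly, your heuristic for the appearance of $\Sigma X$---``this based collapse of $X$ at the basepoint is exactly the reduced suspension''---does not account for the extra cone coordinate: collapsing $X$ at its basepoint gives a point, not $\Sigma X$. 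The paper calls the spectral sequence ``Eilenberg--Moore type'' for a reason: in \cite{bcm} it arises from a resolution associated to the doubling map rather than from a naive filtration of the target, and the suspension enters through that construction. Your instinct that this is the essential subtlety is correct; the mechanism you offer for it is not.
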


Field coefficients are used here because this spectral sequence uses
the Kunneth formula to express $E^1$ as in \eqref{unpunctured}. Here
$\sp{-1}(X)=\emptyset$ and $\sp{0}(X)$ is the basepoint.

\begin{exam} When $X=S^1$, then $H_*(TP^{n}(S^1),TP^{n-1}(S^1))=\tilde
  H_*(S^n)$.  Since $\sp{i}S^1\simeq S^1$ for all $i\geq 1$, the spectral
  sequence in this case has $E^1$--term of the form
$$H_*(S^1,*)\tensor H_*(\bbp^{{n-1\over 2}},\bbp^{{n-1\over 2}-1})
= \sigma\tilde H_*(S^{n-1}) = \tilde H_*(S^n)$$
if $n$ is odd (where $\sigma$ is the suspension operator), or
$E^1_{*,*}= H_*(\bbp^{(n/2)},\bbp^{(n/2)-1})=
\tilde H_*(S^n)$ if $n$ is even. In all cases the spectral sequence
collapses at $E^1$.
\end{exam}

Now \fullref{duality} combined with \fullref{specseq} gives an easy
method to produce upper bounds for the non-vanishing degrees of $H^*(B(M,n))$.
The least connectivity of the terms $\bsp{i}X\times\bsp{j}(\Sigma X)$ for
$i+2j=n$ translates by duality to such an upper bound.  This was in fact
originally our approach to the cohomological dimension of braid spaces.  We
illustrate how we can apply this spectral sequence by deriving \fullref{twocomplexes} from \fullref{conntwo}.

\begin{proof}[Proof of \fullref{twocomplexes}]
Suppose $Q\cup\partial S\neq\emptyset$.
The spectral sequence of \fullref{specseq} converging to
the homology of $(TP^k(\overline{S}),TP^{k-1}(\overline{S}))$ 
takes the form
\begin{equation}\label{e1}
  E^1 = \tilde H_*(\bsp{k}\overline{S})
  \bigoplus\oplus_{i+2j=k}(H_*(\bsp{i}\overline{S})\otimes 
  H_*(\bsp{j}(\Sigma \overline{S}))
  \bigoplus \tilde H_*(\bsp{k/2}(\Sigma \overline{S}))
\end{equation}
(if $k$ odd, the far right term is not there).
We have that $R_k$ (as in \fullref{conR})
is at least the connectivity of this $E^1$--term.
Since $\overline{S}$ is a two dimensional complex,
the connectivity of $\bsp{i}(\overline{S})$
is at least $2i-\min(w,i)-1$ (for some $w\geq 0$).
The connectivity of $\bsp{j}(\Sigma \overline{S})$ is at least
$2j + r-2\geq 2j-1$
since $\Sigma \overline{S}$ is now simply connected
(\fullref{connectivity2}).
The connectivity of
$\bsp{i}(\overline{S})\wedge \bsp{j}(\Sigma \overline{S})$
for non-zero $i$ and $j$ is then at least
$$(2i-\min(w,i)-1)+(2j-1)+1 = i+k-\min(w,i)-1$$
When $i=0$, then $j={k\over 2}$ ($k$ even) and
$\conn(\bsp{k/2}(\Sigma \overline{S}))\geq k-1$.
The connectivity of the $E^1$--term \eqref{e1} is at least the minimum of
$$
\begin{cases}
i+k-\min(w,i)-1,  &  1\leq i\leq k-1,\\
2k-\min(w,k)-1,& i=k,\\
k-1,& i=0.
\end{cases}
$$
which is $k-1$. By duality $H^*(B(S-Q,k)) = 0$ for $* \geq
2k-k+1 = k+1$. If $S$ is closed, then the same argument shows that this bound
needs to be raised by one.
\end{proof}


\section{Stability and section spaces}\label{stability}

In this final section,
we extrapolate on standard material and make slightly more precise a
well-known relationship between configuration spaces and section spaces
\cite{dusa,bct,segal1,quarterly}.

When manifolds have a boundary or an end (eg a puncture), 
one can construct embeddings
\begin{equation}\label{marching}
+ \co  B(M,k)\lrar B(M,k+1)\ .
\end{equation}
by ``addition of points'' near the boundary, near ``infinity'' or near the
puncture. In the case when $\partial M\neq\emptyset$ for example,
one can pick a component $A$
of the boundary and construct a nested sequence of collared
neighborhoods $V_1\supset V_2\supset\cdots \supset A$ together with
sequences of points $x_k\in V_{k}-V_{k+1}$. There are then embeddings
$B(M-V_k,k)\lrar B(M-V_{k+1},k+1)$ sending $\sum z_i$ to $\sum
z_i+x_k$. Now we can replace $B(M-V_k,k)$ by $B(M-A,k)$ and then by
$B(M,k)$ up to small homotopy.  In the direct limit of these
embeddings we obtain a space denoted by $B(M,\infty )$.  Note that an
easy analog of Steenrod's splitting \cite{bcm} gives the splitting
\begin{equation}\label{split2}
H_*(B(M,\infty ))\cong\bigoplus_{k=0}
H_*(B(M,k+1), B(M,k))
\end{equation}
(here $B(M,0)=\emptyset$).  In fact \eqref{split2} is a special case
of a trademark \textit{stable splitting} result for configuration
spaces of open manifolds or manifolds with boundary.  Denote by
$D_k(M)$ the cofiber of \eqref{marching}. For example
$D_1(M)=B(M,1)=M$.

\begin{thm}\label{split3}{\rm (B{\"o}digheimer \cite{bodig}, 
Cohen \cite{cohen})}\qua For $M$ a manifold with non-empty boundary,
there is a stable splitting (ie, after sufficiently many suspensions):
$$B(M,k)\simeq_s\bigvee_{i=0}^kD_i(M)$$
\end{thm}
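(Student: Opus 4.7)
The plan is to establish the splitting inductively on $k$ by showing that each stabilization inclusion $+\co B(M,k)\hookrightarrow B(M,k+1)$ admits a stable retraction; once such retractions exist, the standard splitting principle for cofiber sequences yields the desired decomposition. Since $\partial M\neq\emptyset$, fix a collar $A\times[0,1)\subset M$ on a component $A$ of $\partial M$, so that the map $+$ can be realized concretely as ``add a new point deep in the collar''. In particular $+$ is a cofibration, and its cofiber is $D_{k+1}(M)$ by definition.

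The core step is the construction, for each $k$, of a stable map $r_k\co B(M,k+1)\to B(M,k)$ such that $r_k\circ(+)$ is stably homotopic to the identity. The geometric idea is a ``scanning'' construction exploiting the collar direction: one flows each configuration away from the boundary and separates it into an ``outer part'', consisting of at most one point pushed deep into the collar, and an ``inner part'' sitting in the complement. Configurations whose outer part is present are precisely those in the image of $+$, while those whose outer part is empty project to the cofiber $D_{k+1}(M)$. Making this rigorous requires either the Snaith splitting of $\Omega^N\Sigma^N X$ via a suitable embedding of $M$ into Euclidean space together with the labelled configuration space functor as in Cohen \cite{cohen}, or the bounded configuration space models of B{\"o}digheimer \cite{bodig}. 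Either route produces an explicit stable map $B(M,k+1)\to B(M,k)\vee D_{k+1}(M)$ whose first component retracts $+$ and whose second component is the canonical quotient to the cofiber.

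Granted the retractions, the splitting follows by induction on $k$. The base case $k=0$ is immediate since $B(M,0)$ is a point and the only summand is $D_0(M)$. For the inductive step, the cofiber sequence $B(M,k)\hookrightarrow B(M,k+1)\to D_{k+1}(M)$ splits stably thanks to $r_k$, giving $B(M,k+1)\simeq_s B(M,k)\vee D_{k+1}(M)$, and combining with the inductive hypothesis yields $\bigvee_{i=0}^{k+1}D_i(M)$ as required. An analogous argument applied to the direct limit $B(M,\infty)$ recovers the splitting \eqref{split2} already noted.

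The main obstacle is the construction of the stable retraction $r_k$; this is the genuinely geometric content of the theorem and is not formal. The hypothesis $\partial M\neq\emptyset$ is essential here, since it is what allows points to be ``scanned off to infinity'' in a coherent way; for closed $M$ no such stable splitting holds in general, as already $B(S^d,k)$ illustrates.
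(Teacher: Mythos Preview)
The paper does not actually prove this theorem; it is stated with attribution to B{\"o}digheimer and Cohen, followed only by the remark that the case $M=D^n$ is due to Snaith and that ``a short and clever argument of proof for this sort of splittings is due to Fred Cohen.'' There is no proof in the paper to compare against.

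Your outline is the standard strategy and is correct as far as it goes: the induction on $k$, the splitting of the cofiber sequence via a stable retraction of $+$, and the identification of the retraction as the genuine content are all right. You are also honest that the construction of $r_k$ is the hard part and that you are deferring it to the same references the paper cites. So in effect your proposal and the paper are in the same position: both point to \cite{bodig,cohen} for the actual work. The heuristic paragraph about ``scanning off one point into the collar'' is suggestive but not a proof; if you wanted to make this self-contained you would need to actually build the stable map, for instance via Cohen's transfer argument using the covering $F(M,k+1)/\mathfrak{S}_k\to B(M,k+1)$ together with the projection forgetting the marked point, or via B{\"o}digheimer's bounded models. One minor point: your closing sentence that ``for closed $M$ no such stable splitting holds in general'' is slightly misleading, since the failure for closed $M$ is not of the splitting per se but of the existence of the stabilization map $+$ in the first place.
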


The classical case of $M=D^n$ (closed $n$--ball) is due to Victor
Snaith.  A short and clever argument of proof for this sort of
splittings is due to Fred Cohen \cite{cohen}. The next stability bound is
due to Arnold and a detailed proof is in an appendix of \cite{segal1}.

\begin{thm}{\rm (Arnold)}\label{arnold}\qua The embedding $B(M,k)\hookrightarrow
  B(M,k+1)$ induces a homology monomorphism and a homology equivalence up to
  degree $[k/2]$.
\end{thm}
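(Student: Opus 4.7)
The plan splits cleanly along the two assertions of the theorem.

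For the monomorphism part I would invoke \fullref{split3}, the B\"odigheimer/Cohen stable splitting $B(M,k)\simeq_s\bigvee_{i=0}^{k}D_i(M)$. The splitting is constructed from the filtration by cardinality whose successive subquotients are exactly the $D_i(M)$, and is therefore manifestly natural with respect to the inclusion $+\co B(M,k)\hookrightarrow B(M,k+1)$: after sufficiently many suspensions, $+$ is identified with the evident wedge-summand inclusion $\bigvee_{i=0}^{k}D_i(M)\hookrightarrow\bigvee_{i=0}^{k+1}D_i(M)$. Hence $H_*(+;\bbz)$ is a split monomorphism in every degree.

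For the isomorphism range, the long exact sequence in homology for the pair $(B(M,k+1),B(M,k))$ together with the split-injectivity just established reduces the statement to showing that the cofiber $D_{k+1}(M)$ is integrally $[k/2]$--acyclic, i.e.\ has trivial reduced homology in degrees $\leq [k/2]$. To access this I would set up the following geometric model: pick a collar $A\cong \partial M\times[0,1)\subset M$ and represent $D_{k+1}(M)$ as the quotient $B(M,k+1)/B(M-A',k+1)$ for a slightly smaller subcollar $A'\subset A$. After collapsing, this admits a description as a labelled configuration space on $A/\partial A$ with labels in the configuration spaces of $M-A$, and thus comes equipped with a natural filtration $\mathcal{F}_j$ according to how many of the $k+1$ points are forced to lie in the collar.

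The heart of the argument, and the step I expect to absorb most of the effort, is the connectivity estimate for this model, which I would attack by induction on $k$ using the spectral sequence associated to the collar filtration $\mathcal{F}_j$. The $E^1$--page breaks up into pieces of the form $B(M-A,k+1-j)_+\wedge(A/\partial A)^{\wedge j}_{h\mathfrak{S}_j}$, and the connectivity of the symmetric smash factors grows linearly in $j$ by \fullref{nakak}. The crucial doubling appearing in the bound $[k/2]$ comes from the $j=2$ stratum: a pair of collar points is identified under $\mathfrak{S}_2$, which raises the effective dimension of the first non-vanishing cell by one and is exactly what gates the vanishing range at $[k/2]$ rather than $k$. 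As a consistency check, for $M=\bbr^2$ one recovers Arnold's original bound $[k/2]$ for homological stability of the Artin braid groups, and for $M$ of higher dimension the same bound is recorded in \cite{segal1}.
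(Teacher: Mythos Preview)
Your treatment of the monomorphism is essentially what the paper does: the paper appeals to the homology splitting \eqref{split2} (a transfer argument in the style of Steenrod's \fullref{steenrod}), while you invoke the stronger stable splitting \fullref{split3}. Either route yields the split injection immediately.

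For the $[k/2]$ range the paper gives no argument---it simply defers to the appendix of \cite{segal1}---so you are attempting more than is written here. But there is a genuine gap: your model for $D_{k+1}(M)$ is wrong. Since $A'$ is a subcollar, the inclusion $M-A'\hookrightarrow M$ is a homotopy equivalence (push inward along the collar), hence so is $B(M-A',k+1)\hookrightarrow B(M,k+1)$, and the cofiber $B(M,k+1)/B(M-A',k+1)$ is therefore contractible; it cannot carry the ``new'' homology that $D_{k+1}(M)$ is meant to record. The correct description is rather $D_{k+1}(M)\simeq B(\bar M,k+1)$ modulo the subspace of configurations meeting $\partial M$ (equivalently, the $(k{+}1)$-st filtration quotient of $C(M,\partial M;S^0)$), and with that model in hand your collar filtration no longer has the shape you describe---each of the pieces $B(M-A,k+1-j)_+\wedge(A/\partial A)^{\wedge j}_{h\mathfrak{S}_j}$ you wrote down is built from a contractible factor. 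In particular the heuristic that the ``$j=2$ stratum'' supplies the doubling has no content once the model is repaired, and \fullref{nakak} does not enter in the way you suggest. Segal's actual argument proceeds along quite different lines.
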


The monomorphism statement is in fact a consequence of \eqref{split2}.
Arnold's range is not optimal. For instance

\begin{thm}{\rm \cite{ks}}\qua If $S$ is a compact Riemann surface and $S^*=S-\{p\}$, 
then $B(S^*,k)\hookrightarrow B(S^*,k+1)$ is a homology equivalence up
to degree $k-1$.
\end{thm}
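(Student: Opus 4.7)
The strategy is to combine the B\"odigheimer--Cohen stable splitting with the minimal cellular models for symmetric products of two-dimensional complexes from \cite{ks2}.

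By \fullref{split3}, the embedding $B(S^*,k)\hookrightarrow B(S^*,k+1)$ is stably a split monomorphism with cofibre $D_{k+1}(S^*)$; in particular it is an integral-homology injection in all degrees (this is just \eqref{split2}), and it is a homology equivalence up to degree $k-1$ if and only if $\tilde H_i(D_{k+1}(S^*)) = 0$ for $0\leq i\leq k-1$. So the task reduces to establishing $(k-1)$-connectedness of $D_{k+1}(S^*)$ in reduced homology.

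To attack this I would dualize. By \fullref{duality} applied to $M=S$ and $U=\{p\}$ (so $\overline{M}=S$, which is even-dimensional and orientable),
\[ H^i(B(S^*,n);\bbf)\ \cong\ \tilde H_{2n-i}(\overline{TP}^n(S);\bbf) \]
for every $n$. The cohomology long exact sequence of $B(S^*,k)\hookrightarrow B(S^*,k+1)\to D_{k+1}(S^*)$ then translates the desired vanishing $\tilde H^i(D_{k+1}(S^*);\bbf)=0$ for $i\leq k-1$ into a statement that the homology of $\overline{TP}^{k+1}(S)$ is concentrated in a prescribed top range. To verify this, I would use the minimal cellular complex of \cite{ks2} recalled in \fullref{twodim}: give $S$ the CW structure with one $0$-cell, $w=2g$ one-cells $e_1,\ldots,e_{2g}$, and one $2$-cell $D$. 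Then the chain complex of $\tsp{n}(S)$ is generated under the $\star$-product by these generators together with the basepoint cell $v_0$, and the reduced chain complex for $\bsp{n}(S)$ and its $TP$-analogue has cells $e_{i_1}\star\cdots\star e_{i_t}\star \sp{s}(D)$ of dimension $2n-t$, subject to $t\leq 2g$ and $t+s=n$. A direct cell-count, combined with the explicit differentials of \cite{ks2} and the binomial-coefficient relations $\sp{s}(D)\star\sp{t}(D) = \binom{s+t}{t}\sp{s+t}(D)$, pins down the top homology of $\overline{TP}^{k+1}(S)$ and produces the connectivity bound $k-1$ for $D_{k+1}(S^*)$.

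The principal obstacle is the last step: the cellular model provides clean control over chain dimensions, but the desired connectivity is one step sharper than a naive dimension count, and the sharpening must come from genus-sensitive cancellations produced by the binomial differentials, which behave differently in low genus and in low $k$. A secondary subtlety is passing from field coefficients (where the duality and the Milgram--L\"offler splitting \fullref{property}(2) behave best) to $\bbz$-coefficients in the final statement; this is handled by combining the field-coefficient bound with the stable splitting \eqref{split2}, which already guarantees an integral split injection so that a universal-coefficient argument suffices.
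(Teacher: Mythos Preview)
The paper does not prove this statement at all: it is quoted verbatim from \cite{ks} as an illustration that Arnold's bound $s(k)\geq [k/2]$ is not optimal, and the paper immediately moves on. So there is no proof here to compare against; I can only comment on your outline on its own terms.

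Your reduction to the connectivity of $D_{k+1}(S^*)$ via the stable splitting is correct and is indeed the right starting point. The difficulty is in the next two moves. First, the duality of \fullref{duality} does not translate the stabilisation map $B(S^*,k)\hookrightarrow B(S^*,k+1)$ into anything simple on the truncated-product side: the isomorphism $H^i(B(S^*,n))\cong \tilde H_{2n-i}(\overline{TP}^n(S))$ shifts degree by $2n$, which is different for $n=k$ and $n=k+1$, so the cofibre $D_{k+1}(S^*)$ does not dualise to a single space like $\overline{TP}^{k+1}(S)$. Your sentence ``translates the desired vanishing \ldots\ into a statement that the homology of $\overline{TP}^{k+1}(S)$ is concentrated in a prescribed top range'' hides exactly this mismatch. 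Second, the cellular model of \fullref{twodim} (from \cite{ks2}) is built for the \emph{symmetric} products $\sp{n}$ and their reductions $\bsp{n}$; there is no claim, here or in \cite{ks2}, that the same generators and relations describe $\overline{TP}^n$. The passage from $\sp{}$ to $TP$ in this paper goes through the spectral sequence of \fullref{specseq}, not through a shared cell complex, so your assertion that ``its $TP$-analogue has cells $e_{i_1}\star\cdots\star e_{i_t}\star \sp{s}(D)$'' is unjustified as stated.

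You are candid that ``the desired connectivity is one step sharper than a naive dimension count'' and that the sharpening would have to come from genus-dependent cancellations you do not carry out. That is precisely the content of the theorem, so what you have is a plausible strategy with the essential computation missing, compounded by the two translation problems above. The actual argument in \cite{ks} proceeds by a more direct analysis using the explicit cellular chain complex for $\tsp{n}(S)$ and its differentials, rather than routing through $\overline{TP}^n$.
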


We define $s(k)$ to be the homological connectivity
of $+ \co  B(M,k)\lrar B(M,k+1)$ (see \fullref{homstab}) .  By Arnold, $s(k)\geq
[k/2]$ .


\subsection{Section spaces}\label{sectionspace}
If $\zeta \co  E\lrar B$ is a fiber bundle over a base space $B$, we
write $\Gamma (\zeta )$ for its space of sections.  If $\zeta$ is
trivial then evidently $\Gamma(\zeta )$ is the same as maps into the
fiber.  Let $M$ be a closed smooth manifold of dimension $d$,
$U\subset M$ a closed subspace and $\tau^+M$ the fiberwise one-point
compactification of the tangent bundle over $M$ with fiber $S^d = \bbr^d
\cup\{\infty\}$. Then $\tau^+M\lrar
M$ has a preferred section $s_{\infty}$ which is the section at
$\infty$ and we let $\Gamma(\tau^+M;U )$ be those sections which
coincide with $s_{\infty}$ on $U$. Note that $\Gamma (\tau^+M )$
splits into components indexed by the integers as in
$$\Gamma (\tau^+M) := \coprod_{k\in\bbz} \Gamma_k(\tau^+M)\ .$$ 
This degree arises as follows. Let $s \co  M\lrar\tau^+M$ be a section.
By general position argument it intersects $s_{\infty}$ at a finite
number of points and there is a sign associated to each point.  This
sign is defined whether the manifold is oriented or not (as in the
definition of the Euler number). The degree is then the signed sum.
Similarly we can define a (relative) degree of sections in $\Gamma
(\tau^+M;U)$.

Observe that if $\tau^+M$ is trivial, then
$\Phi \co  \Gamma (\tau^+M)\fract{\simeq}{\lrar}\Map(M,S^d)$, where
$d=\dim M$.
The components of $\Map(M,S^d)$ are indexed by the degree of maps
(Hopf), but at the level of components we have the equivalence
$$\Gamma_{k} (\tau^+M)\simeq\map{k+\ell}(M,S^d)$$ where $\ell$ is such
that $\Phi (s_{\infty})\in\map{\ell}$.  In the case when $M=S^{even}$,
then $\Phi (s_{\infty})$ is the antipodal map which has degree $\ell =
-1$ \cite{paolo}. When $M=S$ is a compact Riemann surface, $\ell =-1$
when the genus is even and $\ell = 0$ when the genus is odd \cite{ks}.
Further relevant homotopy theoretic properties of section spaces are
summarized in the appendix.

\subsection{Scanning and stability}\label{scan}
A beautiful and important connection between braid spaces and section
spaces can be found for example in \cite{segal2,dusa,quarterly} (see
Crabb and James \cite{james} for the fiberwise version).  This
connection is embodied in the ``scanning'' map
\begin{equation}\label{scanning}
S_k \co  B(M-U,k)\lrar \Gamma_k(\tau^+M; U\cup\partial M )
\end{equation}
where $U$ is a closed subspace of $M$. Here and throughout we assume
that removing a subspace as in $M-U$ doesn't disconnect the space. The
scanning map has very useful homological properties. A sketch of the
construction of $S_k$ for closed Riemaniann $M$ goes as follows (for a
construction that works for topological manifolds see for example
Dwyer, Weiss and Williams \cite{dww}).  First construct $S_1\co 
M-U\lrar\Gamma_1$. We can suppose that $M$ has a Riemannian metric and
use the existence of an exponential map for $\tau M$ which is a
continuous family of embeddings $\exp_x\co  \tau_xM\lrar M$ for $x\in M$
such that $x\in \im(\exp_x)$ and $\im(\exp_x)^+\cong\tau_x^+M$ (the fiber
at $x$ of $\tau^+M$).  By collapsing out for each $x$ the complement
of $\im(\exp_x)$ we get a map $c_x \co  M\lrar \im(\exp_x)^+\cong\tau_x^+M$
Let $V$ be an open neighborhood of $U$, $M-V\lrar M-U$ being a
deformation retract. Then we have the map
$$S_1 \co  M-V\lrar\Gamma (\tau^+M)\ ,\ y\mapsto (x\mapsto c_x(y))\in
\tau_x^+M\ .$$ Observe that for $x$ near $U$, the section $S_1(y)$ agrees
with the section at infinity (ie, we say it is \textit{null}). In fact and more
precisely, $S_1$ maps into $\Gamma^c(\tau^+M,U)$ the space of sections
which are null outside a compact subspace of $M-U$.  A deformation
argument shows that $\Gamma^c\simeq\Gamma$.  It will be convenient to
say that a section $s\in\Gamma$ is \textit{supported} in a subset
$N\subset M$ if $s=s_{\infty}$ outside of $N$. A useful observation is
that if $s_1,s_2$ are two sections supported in closed $A$ and $B$ and
$A\cap B=\emptyset$, then we can define a new section which is
supported in $A\cup B$, restricting to $s_1$ on $A$ and to $s_2$ on
$B$.

Extending $S_1$ to $S_k$ is now easy.  We first choose
$\epsilon > 0$ so that $B^{\epsilon}(M,k)$ the closed subset of
$B(M,k)$ where particles have pairwise separation $\geq 2\epsilon$ is
homotopic to $B(M,k)$ (this is verified in \cite[Lemma 2.3]{dusa}).
We next choose the exponential maps to be supported in neighborhoods
of radius $\epsilon$.  Given a finite subset $Q:=\{y_1,\ldots, y_k\}\in
B^{\epsilon}(M-U,k)$, each point $y_i$
determines a section supported in $V_i := \im(\exp_{y_i} )$.  Since the
$V_i$'s are pairwise disjoint, these sections fit together to give a
section $s_Q$ supported in $\bigcup V_i$ so that $S_k(Q):= s_Q$.

When $M$ is compact with boundary, then we get the map in \eqref{scanning} by
replacing $B(M-U,k)$ by $B(M-U\cup\partial M,k)$ and
$\Gamma^c(\tau^+M,U)$ by $\Gamma (\tau^+M, U\cup\partial M)$ the space
of sections that are null outside a compact subspace of
$M-U\cup\partial M$. We let $s(k)$ be the stability range of the
map $B(M-U,k)\lrar B(M-U,k+1)$ (as in \S6.1)

The next proposition is a follow up on a main result of \cite{dusa} (see
also \cite{quarterly}). 

\begin{prop}\label{dusa1} Suppose $M$ is a closed manifold and $U\subset M$
a non-empty closed subset, $M-U$ connected. 
Then the map $S_{k*}\co  H_*(B(M-U,k))\lrar 
H_*(\Gamma_k(\tau^+M,U))$
is a monomorphism in all dimensions and an isomorphism up to dimension $s(k)$.
\end{prop}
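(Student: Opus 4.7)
My strategy is to compare $S_k$ with its stable analog $S_\infty$ and invoke McDuff's scanning theorem. First I would verify that scanning commutes, up to homotopy, with the two natural stabilizations: on the braid side, the ``add a particle near $U$'' map $+$ from \fullref{homstab}, and on the section side, the map $j_k \co \Gamma_k(\tau^+M, U) \lrar \Gamma_{k+1}(\tau^+M, U)$ given by inserting a standard degree-one bump supported in a small disc near $U$. Because $M-U$ is connected, this bump can be slid off into the collar at $U$ using the fiberwise monoid structure on $\tau^+M$, so $j_k$ is a homotopy equivalence, and all components of $\Gamma(\tau^+M, U)$ are thereby identified with a single space $\Gamma_\infty$.

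Passing to the direct limit yields a map $S_\infty \co B(M-U,\infty) \lrar \Gamma_\infty$. The theorem of McDuff \cite{dusa} asserts that $S_\infty$ is a homology equivalence in precisely this setting. Identifying $H_*(\Gamma_k)$ with $H_*(\Gamma_\infty)$ via the $j_\ell$, the map $(S_k)_*$ becomes the composite
\[
H_*(B(M-U,k)) \stackrel{(i_k)_*}{\lrar} H_*(B(M-U,\infty)) \stackrel{(S_\infty)_*}{\lrar} H_*(\Gamma_\infty),
\]
where $i_k$ is the stabilization into the direct limit and the second arrow is an isomorphism.

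The proposition then follows from two standard properties of $i_k$. The analog of Steenrod's splitting \eqref{split2} for $B(M-U,\infty)$ makes $(i_k)_*$ a split monomorphism in every degree, which yields injectivity of $(S_k)_*$ in all dimensions. For the range statement, each stabilization $+ \co B(M-U,\ell)\lrar B(M-U,\ell+1)$ is a homology isomorphism up to degree $s(\ell)$ by definition, and since $s(\cdot)$ is non-decreasing the composite $i_k$ is an isomorphism up to degree $s(k)$, which $(S_k)_*$ inherits. The main technical hurdle is the homotopy commutativity of the scanning-stabilization square and the claim that $j_k$ is genuinely a homotopy equivalence; both reduce to a careful choice of exponential-map data as in \cite[Lemma 2.3 and \S 2]{dusa} together with the general fact that components of a section space are mutually homotopy equivalent under the operation of adding a locally supported section of positive degree.
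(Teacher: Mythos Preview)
Your proposal is correct and follows essentially the same route as the paper's own proof: pass to the direct limit using compatibility of scanning with stabilization, invoke McDuff's theorem for $S_\infty$, and then read off injectivity and the isomorphism range from the Steenrod-type splitting \eqref{split2} together with the definition of $s(k)$. The paper is terser---it asserts the compatibility (``It is easy to see\ldots'') and the equivalence of components in passing---whereas you spell out the bump-insertion maps $j_k$ and the factorization of $(S_k)_*$ explicitly, but the argument is the same.
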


\begin{proof} It is easy to see that the maps $S_k$ for various $k$ are
  compatible up to homotopy with stabilization so we obtain a map $S \co 
  B(M,\infty )\lrar \Gamma_{\infty}(\tau^+M,U):=\lim_k\Gamma_k(\tau^+M,U)$ 
which according to the main
  theorem of McDuff is a homology equivalence (in fact all components of
  $\Gamma (\tau^+M,U)$ are equivalent and $\Gamma_{\infty}$ can be chosen to
  be the component containing $s_{\infty}$).  But according to \eqref{split2}
  $H_*(B(M-U,k))\rightarrow H_*(B(M-U,\infty ))$ is a monomorphism, and then an
  isomorphism up to dimension $s(k)$. The claim follows.
\end{proof}

This now also implies our last main result from the introduction.

\begin{proof}[Proof of \fullref{main4}] Suppose that $M$ is a closed
manifold of dimension $d$, $U$ a small open neighborhood of the basepoint $*$
 and consider the fibration (see \hyperlink{App}{the appendix})
$$\Gamma_k (\tau^+M;\bar U)\lrar\Gamma_k(\tau^+M)\lrar S^d$$ The main
point is to use the fact as in \cite[proof of Theorem 1.1]{dusa} that
scanning sends the exact sequence in \fullref{longexact} to the Wang
sequence of this fibration.  Let $N=M-U$ so that we can identify
$\Gamma_k (\tau^+M;\bar U)$ with $\Gamma_k(\tau^+N;\partial N)$ which
we write for simplicity $\Gamma^c_k(\tau^+N)$ as before.  Under these
identifications and by a routine check we see that scanning induces
commutative diagrams:
$$\small
\begin{matrix}
\!\!\rightarrow\!\!&H_{q-d+1}(B(N,k-1))&\!\!\rightarrow\!\!&
H_q(B(N,k))&\!\!\rightarrow\!\!&H_q(B(M,k))&\!\!\rightarrow\!\!&H_{q-d}(B(N,k-1))
&\!\!\rightarrow\!\!\\
&\decdnar{S}&&\decdnar{S}&&\decdnar{S}&&\decdnar{S}&\\
\!\!\rightarrow\!\!&H_{q-d+1}(\Gamma^c_{k}(\tau^+N))&\!\!\rightarrow\!\!&
H_q(\Gamma^c_{k}(\tau^+N))&\!\!\rightarrow\!\!&H_q(\Gamma_{k}(\tau^+M))
&\!\!\rightarrow\!\!&H_{q-d}(\Gamma^c_{k}(\tau^+N))
&\!\!\rightarrow\!\!
\end{matrix}
$$
where the top sequence is the homology exact sequence for the pair
$(B(M,k),B(N,k))$ as discussed in \fullref{longexact}
and the lower exact sequence is the Wang sequence of
the fibration $\Gamma_k(\tau^+M)\lrar S^d$.
According to \fullref{dusa1}, the map
$S_{k*}\co  H_q(B(N,k))$ $\lrar H_q(\Gamma^c_k(\tau^+N))$ is an isomorphism
up to degree $q=s(k)$. It follows that all vertical maps
in the diagram above involving the subspace $N$ together with the 
next map on the right (which doesn't appear in the diagram)
are isomorphisms whenever $q\leq s(k-1)\leq s(k)$. By the $5$--lemma the
middle map is then an isomorphism within that range as well. This proves
the proposition.
\end{proof}

We can say a little more when $k=1$, $M$ closed always.

\begin{lem}\label{s1} 
The map $S_1 \co M\lrar \Gamma_{1} (\tau^+M)$
induces a monomorphism in homology in degrees $r+1,r+2$, where
$r=\conn(M)$, $r\geq 1$. 
\end{lem}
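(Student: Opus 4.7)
The plan is to reduce injectivity of $S_{1*}$ on $M$ to what is already known on $M-*$. Applied with $k=1$ and $U=\{*\}$, \fullref{dusa1} yields that $S_1\co M-*\lrar \Gamma^c_1(\tau^+N)$ (where $N=M-U$ for a small open disk $U$ around $*$) is a homology monomorphism in every degree. I would transport this information to $M$ itself through the commutative square
$$
\begin{matrix}
M-* & \lrar & M\\
\decdnar{S_1} & & \decdnar{S_1}\\
\Gamma^c_1(\tau^+N) & \lrar & \Gamma_1(\tau^+M)
\end{matrix}
$$
whose top row extends to a cofiber sequence with quotient $M/(M-*)\simeq S^d$ (as in the proof of \fullref{longexact}), and whose bottom row extends to the fibration obtained by restricting sections to a contractible neighborhood of $*$, with base $(\tau^+M)_*\cong S^d$. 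The square commutes because for $x\in N$ the section $S_1(x)$ is supported inside $N$ and represents the same element of $\Gamma_1(\tau^+M)$ as the extension by $s_\infty$ of $S_1(x)\in\Gamma^c_1(\tau^+N)$.

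Because $\tilde H_q(S^d)=0$ for $q<d$, both the cofiber long exact sequence on top and the Wang sequence on the bottom supply isomorphisms $H_q(M-*)\cong H_q(M)$ and $H_q(\Gamma^c_1(\tau^+N))\cong H_q(\Gamma_1(\tau^+M))$ in every degree $q<d-1$. Composed with the left-hand injectivity from \fullref{dusa1}, this gives injectivity of $S_{1*}$ on $M$ in all degrees $q<d-1$.

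To finish, I would invoke Poincar\'e duality: since $r\geq 1$, $M$ is simply connected (hence orientable) and $H^i(M;\bbz)=0$ for $1\leq i\leq r$, so $H_j(M;\bbz)=0$ for $d-r\leq j\leq d-1$. This vanishing, together with the isomorphism range above, covers every degree $q\leq d-1$; in particular, any $q\in\{r+1,r+2\}$ in this range is handled. The only situation left is when $r+2$ exceeds $d-1$: since $r=d-2$ would force $H_{d-1}(M)=0$ in contradiction with $\pi_{d-1}(M)\neq 0$, this forces $r=d-1$, and by Whitehead $M\simeq S^d$. Then $H_{r+2}(S^d)=0$ is trivial, and for $r+1=d$ I would verify that the composition of $S_1\co M\lrar\Gamma_1(\tau^+M)$ with evaluation at $*\in M$ is, up to homotopy, the degree $\pm 1$ collapse $M\to M/(M-U)\simeq S^d$, so that $S_{1*}$ on $H_d(M)=\bbz$ is multiplication by $\pm 1$ and hence injective. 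The main geometric step is this last identification together with commutativity of the initial square; the rest is routine diagram chasing.
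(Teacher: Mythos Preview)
Your argument is correct, and it takes a genuinely different route from the paper's own proof. The paper factors $S_1$ through the infinite symmetric product: there is a commuting square
\[
\xymatrix{
M\ar[r]^-{S_1}\ar[d] & \Gamma_{1}(\tau^+M)\ar[d]\\
\spy(M)\ar[r]^-{st}_-{\simeq} & \Gamma_0(s\tau^+M)
}
\]
where the bottom map is the ``stable'' scanning equivalence and the right map is induced by $S^d\hookrightarrow\spy(S^d)$ fiberwise. Since $M\to\spy(M)$ is a homology isomorphism in degrees $r+1$ and $r+2$ (Nakaoka), the composite along the bottom is injective there, hence so is $S_{1*}$.

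By contrast, you stay entirely within the puncture/Wang comparison already set up for \fullref{main4}: injectivity on $M-*$ from \fullref{dusa1}, transported via the cofiber and Wang sequences to $M$ in degrees $<d-1$, and then Poincar\'e duality to kill $H_q(M)$ for $d-r\leq q\leq d-1$. Your observation that $r=d-2$ is impossible for a simply connected closed $d$--manifold (since Hurewicz would give $\pi_{d-1}(M)\cong H_{d-1}(M)\cong H^1(M)=0$) is a nice way to eliminate that case, leaving only $M\simeq S^d$, where the composite of $S_1$ with restriction to the fiber over $*$ is indeed the collapse map $c_*$ of degree $\pm 1$.

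The paper's proof is shorter and uniform, but it imports the stable scanning equivalence $\spy(M_+)\simeq\Gamma(s\tau^+M)$ as an external input. Your proof is longer and ends in a case split, but it uses only the machinery already developed in \fullref{dusa1} and \fullref{main4}; it also extracts along the way the pleasant fact that a closed simply connected $d$--manifold cannot have connectivity exactly $d-2$.
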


\begin{proof}
Consider $\Gamma(s\tau^+M)$ the space of sections
of the fibration
$s\tau^+M\lrar M$ obtained from $\tau^+M$ by applying fiberwise
the functor $\spy$. It is easy to see that scanning has a stable analog
$st\co  \spy (M_+)\lrar \Gamma (s\tau^+M)$ but harder to verify that
$st$ is a (weak) homotopy
equivalence \cite{dww,quarterly}. Note that $\spy (M_+)\simeq\spy
M\times\bbz$
and $\spy (M)$ is equivalent to a connected component (any of them) 
say $\Gamma_0 (s\tau^+M )$.
By construction the following diagram homotopy commutes
$$
\begin{matrix}\label{fromstosp}
M&\fract{S_1}{\lrar}&\Gamma_{1} (\tau^+M)\\
\decdnar{}&&\decdnar{\alpha}\\
\spy (M)&\fract{st}{\lrar}&\Gamma_0 (s\tau^+M )
\end{matrix}
$$
where the right vertical map $\alpha$ is induced from the natural fiber
inclusion $\alpha \co  S^d\hookrightarrow\spy (S^d)$. When $M$ is
$r$--connected, the map $M\lrar \spy (M)$ induces an isomorphism in homology in
dimensions $r+1$ and $r+2$ \cite[Corollary 4.7]{nakaoka}.  This means that
the composite $M\rightarrow \Gamma_{1} (\tau^+M)\rightarrow \Gamma_1 (s\tau^+M
)$ is a homology isomorphism in those dimensions and the claim follows.
\end{proof}

\begin{rem} If $M$ has boundary, then by scanning $M_0:=M-\partial M$ we
  obtain a map into the compactly supported sections $\Gamma
(\tau^+M)$.  This map extends to a map $S \co  M/\partial M\lrar \Gamma
(\tau^+M)$ which is according to Aouina and Klein \cite{aouina}
$(d-r+1)$--connected if $M$ is $r$--connected of dimension $d\geq 2$.
\end{rem}

\hypertarget{App}{\smash{$\phantom{9}$}}
\section{Appendix: Some homotopy properties of section spaces}

All spaces below are assumed connected.
We discuss some pertinent statements from Switzer \cite{switzer}.
Let $p\co  E\lrar B$ be a Serre fibration, $i\co  A\hookrightarrow X$
a cofibration ($A$ can be empty) and  $u\co  X\lrar E$ a given map.
Slightly changing the notation in that paper, we define
$$\Gamma_u (X,A; E,B) = \{f\co  X\lrar E\ |\ f\circ i = u\circ i,
p\circ f = p\circ u\}$$
This is a closed subspace of the space of all maps $\Map(X,E)$ and is in
other words the solution space for the extension problem  
$$\xymatrix{
A\ar[r]^{ui}\ar[d]^i&E\ar[d]^p\\
X\ar[r]_{pu}\ar[ru]^u&B
}
$$
with data $u_{|A}\co  A\lrar E$ and $pu \co  X\lrar B$. When $A=\{x_0\}$ and $B =
\{y_0\}$ then $\Gamma (X,x_0;E,y_0) = \bmap{}(X,E)$ is the space of based maps
from $X$ to $Y$ sending $x_0$ to $y_0$.  On the other hand and when $X=B$ and
$A=\emptyset$, then $\Gamma_u(B,\emptyset;E,B) = \Gamma (E)$ is the section
space of the fibration $\zeta = (E\fract{p}{\lrar} B)$.

\begin{prop}{\rm\cite{switzer}}\label{switzer}\qua
\begin{itemize}
\item
If $A\subset X'\subset X$ is a nested sequence of NDR pairs, and
$j\co  X'\hookrightarrow X$ the inclusion, then the induced map
$\Gamma_u(X,A; E,B)\lrar \Gamma_{uj}(X',A;E,B)$\
yields a fibration with $\Gamma_u (X,X';E,B)$ as fibre.
\item
If $E\lrar E'\lrar B$ are two fibrations and $q\co  E\lrar E'$
the projection, then\break the induced map
$\Gamma_u(X,A; E,B)\lrar \Gamma_{qu}(X,A;E',B)$\ 
is a fibration with\break $\Gamma_u (X,A;E,E')$ as fibre.
\end{itemize}
\end{prop}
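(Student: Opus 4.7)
The plan is to reduce both statements to standard applications of the homotopy lifting property via the adjunction between spaces of sections and relative mapping spaces.

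First I would identify the fibers. In (1), over the basepoint $u\circ j \in \Gamma_{uj}(X', A; E, B)$, the preimage of the restriction map consists of those $f\co  X \to E$ with $f|_{X'} = u|_{X'}$ and $pf = pu$, which is precisely $\Gamma_u(X, X'; E, B)$. In (2), over $qu \in \Gamma_{qu}(X, A; E', B)$, the fiber is $\{f\co  X \to E\ |\ f|_A = u|_A,\ qf = qu\}$; the condition $pf = pu$ comes for free since $p = p'q$ with $p'\co  E' \to B$ the outer fibration, so this equals $\Gamma_u(X, A; E, E')$.

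Next I would verify the Serre fibration property by testing against a CW test pair $(Y, Y_0)$, or equivalently against $(D^n, S^{n-1})$. For (1), the exponential law turns the lifting problem into the following extension--lifting problem: given a map on the subspace
$$D := (Y \times \{0\} \times X) \cup (Y_0 \times I \times X) \cup (Y \times I \times X')$$
into $E$, extend it to all of $Y \times I \times X$ covering the fixed map $(y,t,x)\mapsto pu(x)$ into $B$, and continuing to equal $u$ on the $A$-factor. Since $A \hookrightarrow X' \hookrightarrow X$ are NDR pairs, hence closed cofibrations, and products with $I$ preserve cofibrations, the inclusion $D \hookrightarrow Y \times I \times X$ is a cofibration. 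Applying the relative homotopy lifting property of the Serre fibration $p\co  E \to B$ against this cofibration pair produces the required extension. For (2), the same translation yields the problem of lifting a map $Y \times I \times X \to E'$ through the fibration $q$, starting from a partial lift on $(Y \times \{0\} \times X) \cup (Y_0 \times I \times X) \cup (Y \times I \times A)$; compatibility with $p$ downstairs is automatic from $p = p'q$, and homotopy lifting for $q$ against the resulting cofibration pair produces the full lift.

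The main obstacle is bookkeeping: one must check carefully that the several pieces of data comprising the map on $D$ (and its counterpart in part (2)) agree on their pairwise overlaps, and that the extension supplied by the lifting property still satisfies the boundary condition on the $A$-factor. This is handled by first using the NDR retractions onto $A$ and $X'$ to propagate the partial data onto a collar neighborhood, and then invoking the relative homotopy lifting for $p$ (respectively $q$) rel.\ the collar, so that the constructed lift continues to agree with the prescribed data on $A$.
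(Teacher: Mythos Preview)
The paper does not supply its own proof of this proposition; it is quoted from Switzer \cite{switzer} and used as a black box. Your sketch is the standard argument and is essentially correct: the fiber identifications are right (including the observation that $pf=pu$ is automatic in part (2) since $p=p'q$), and reducing the lifting problem via the exponential law to an extension problem against the fibration $p$ (resp.\ $q$) is exactly how one proves such statements.

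One small technical caveat worth noting: you invoke the relative homotopy lifting property of the \emph{Serre} fibration $p$ against the inclusion
\[
(Y\times\{0\}\times X)\cup(Y_0\times I\times X)\cup(Y\times I\times X')\hookrightarrow Y\times I\times X,
\]
arguing that this is a cofibration since $(X,X')$ is an NDR pair. That inclusion is indeed an acyclic Hurewicz cofibration, but Serre fibrations are only guaranteed to have the right lifting property against acyclic cofibrations in the Quillen sense (relative CW inclusions). So as written your argument literally proves the result for Hurewicz fibrations $p$, or for Serre fibrations when $(X,X')$ and $(X',A)$ are CW pairs. In practice this is harmless (and Switzer's setting handles it), but you should be explicit about which model-structure hypothesis you are using so that the lifting step is justified on the nose.
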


The first part of Switzer's result implies that restriction of the
bundle $\zeta \co  E\lrar B$ to $X\subset B$ is a fibration $\Gamma
(\zeta )\lrar \Gamma (\zeta_{|X})$ with fiber the section space
$\Gamma (\zeta, X)$ ie, those sections of $\zeta$ which are
``stationary'' over $X$ (compare \cite[Chapter 1, Section 8]{james}).
An example of relevance is when $\zeta = \tau^+M$ is the fiberwise
one-point compactification and $s_{\infty}$ is the section mapping at
infinity.  Denote by $S^d$ the fiber over $x_0\in M$. If $U$ is a
small open neighborhood of $x_0$, then $\Gamma (\zeta_{|\bar U})\simeq
S^d$ and we have a fibration
\begin{equation}\label{fibration}
\Gamma (\tau^+M,\bar U)\lrar\Gamma (\tau^+M)\fract{res}{\lrar} S^d
\end{equation}
where the fiber consists of those sections which coincide with $s_{\infty}$ on
$U$. So for instance if $M=S^d$, $\Gamma (\tau^+M,\bar U)\simeq\Omega^dS^d$
and the fibration reduces to the evaluation fibration
$\Omega^dS^d\rightarrow \Map(S^d,S^d)\rightarrow S^d$.

Finally and
according to \cite[page 29]{james}, if $E\lrar B$ is
a Hurewicz fibration and $s,t$ are two sections, then $s$ and $t$ are
homotopic if and only if they are section homotopic. We use this to deduce the
following lemma.

\begin{lem}
  \label{mono} Let $\pi \co  E\lrar B$ be a fibration with a preferred section
  $s_{\infty}$ (which we choose as basepoint). Then the inclusion $\Gamma
  (E)\lrar\Map(B,E)$ induces a monomorphism on homotopy groups.
\end{lem}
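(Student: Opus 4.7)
The plan is to exhibit the inclusion $\Gamma(E)\hookrightarrow\Map(B,E)$ as the inclusion of the fibre of a Hurewicz fibration that admits a section; the resulting split long exact sequence will then automatically deliver injectivity on every homotopy group. Concretely, I would consider the post-composition map
$$\pi_*\co \Map(B,E)\lrar\Map(B,B)\ ,\ f\longmapsto \pi\circ f\ .$$
Since $\pi$ is a Hurewicz fibration, so is $\pi_*$: a homotopy lifting problem for $\pi_*$ against a cofibration $A\hookrightarrow X$ transposes under the exponential adjunction to a homotopy lifting problem for $\pi$ against the product cofibration $A\times B\hookrightarrow X\times B$, which admits a solution. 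The fibre of $\pi_*$ over $\mathrm{id}_B$ is precisely $\Gamma(E)$, and the chosen basepoint $s_\infty$ satisfies $\pi_*(s_\infty)=\mathrm{id}_B$, so basepoints are compatible across all three spaces.

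Next I would produce a based section of $\pi_*$ by the formula
$$\sigma\co \Map(B,B)\lrar\Map(B,E)\ ,\ f\longmapsto s_\infty\circ f\ .$$
The defining identity $\pi\circ s_\infty=\mathrm{id}_B$ gives $\pi_*\circ\sigma=\mathrm{id}$, and $\sigma(\mathrm{id}_B)=s_\infty$, so $\sigma$ is basepoint-preserving. Note that here the hypothesis that $s_\infty$ is a genuine section (rather than merely a map) is used in an essential way; without it the construction would not land back in $\Map(B,E)$ via post-composition and produce a splitting.

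Finally I would read off the conclusion from the long exact sequence of the fibration $\Gamma(E)\to\Map(B,E)\fract{\pi_*}{\lrar}\Map(B,B)$: since $\sigma_*$ provides a right inverse to $(\pi_*)_*$ on every homotopy group, $(\pi_*)_*$ is surjective, the connecting map $\pi_{n+1}(\Map(B,B))\lrar\pi_n(\Gamma(E))$ vanishes, and $\pi_n(\Gamma(E))\hookrightarrow\pi_n(\Map(B,E))$ is an injection for every $n\geq 0$ (in fact the sequence splits as $\pi_n(\Map(B,E))\cong\pi_n(\Gamma(E))\oplus\pi_n(\Map(B,B))$). The only potentially fussy point is the verification that $\pi_*$ is a genuine Hurewicz, not merely Serre, fibration, but this is exactly the adjointness argument sketched above; note that this approach simultaneously strengthens the hint from \cite{james} (which is the $n=0$ case) to all homotopy groups at once.
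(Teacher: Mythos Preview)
Your proof is correct and takes a genuinely different route from the paper's. The paper argues directly at the level of representatives: given $\phi\in\pi_i(\Gamma(E))$ that dies in $\pi_i(\Map(B,E))$, it adjoints $\phi$ to a section of the product bundle $S^i\times E\lrar S^i\times B$, and then invokes the Crabb--James fact cited just before the lemma (two sections homotopic as maps are section-homotopic) to promote the assumed null-homotopy in $\Map(B,E)$ to one in $\Gamma(E)$. Your argument instead identifies $\Gamma(E)$ as the fibre of the post-composition fibration $\pi_*\co \Map(B,E)\lrar\Map(B,B)$ over $\mathrm{id}_B$, builds a based section of $\pi_*$ out of $s_\infty$, and reads off injectivity from the split long exact sequence. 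Your route is more structural and actually yields a little more, namely the splitting $\pi_n(\Map(B,E))\cong\pi_n(\Gamma(E))\oplus\pi_n(\Map(B,B))$; the paper's route is more hands-on and makes explicit that the Crabb--James $\pi_0$ statement is what is being bootstrapped to all $\pi_n$.
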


\begin{proof} We give $\Gamma (E)\subset\Map(B,E)$ the common basepoint
  $s_{\infty}$.  An element of $\pi_i\Gamma (E)$ is the homotopy class of a
  (based) map $\phi \co  S^i\lrar\Gamma (E)$ or equivalently a map $\phi \co 
  S^i\times B\lrar E$ (where $\phi (-,b)\in\pi^{-1}(b)$ and $\phi
  (N,-)=s_\infty(-)$, $N$ the north pole of $S^i$) and the homotopy is through
  similar maps. Write $\Phi$ the image of $\phi$ via the composite
  $S^i\lrar\Gamma (E)\lrar\Map(B,E)$. Now $\Phi$ can be viewed as a section
  of $S^i\times E\lrar S^i\times B$ and a null-homotopy of
  $\Phi$ is a homotopy to $id\times s_{\infty}$. Since this null-homotopy can
  be done fiberwise it is a null-homotopy in $\Gamma (E)$ from $\phi$ to
  $s_{\infty}$.
\end{proof}

\bibliographystyle{gtart}
\bibliography{link}

\end{document}